%-----------------------------------------------------------------------
% Beginning of article-template.tex
%-----------------------------------------------------------------------
%
%    This is a template file for proceedings articles prepared with AMS
%    author packages, for use with AMS-LaTeX.
%
%    Templates for various common text, math and figure elements are
%    given following the \end{document} line.
%
%%%%%%%%%%%%%%%%%%%%%%%%%%%%%%%%%%%%%%%%%%%%%%%%%%%%%%%%%%%%%%%%%%%%%%%%

%    Remove any commented or uncommented macros you do not use.

%    Replace amsproc by the name of the author package.
\documentclass{conm-p-l}

%    If you need symbols beyond the basic set, uncomment this command.
%\usepackage{amssymb}

%    If your article includes graphics, uncomment this command.
%\usepackage{graphicx}

%    If the article includes commutative diagrams, ...
%\usepackage[cmtip,all]{xy}

%    Include other referenced packages here.
\usepackage{}

%    Update the information and uncomment if AMS is not the copyright
%    holder.
%\copyrightinfo{2009}{American Mathematical Society}

% various categories

%various functors

%%%%%%%%%%%%%%%%%%%%%%%%%%%%%%%%%%%%%%%%%%%%%%%%%%%%%%%%%%%%%%%%%%%%%

\newtheorem{thm}{Theorem}[section]
\newtheorem{lemma}[thm]{Lemma}
\newtheorem{prop}[thm]{Proposition}
\newtheorem{cor}[thm]{Corollary}

\theoremstyle{definition}
\newtheorem{defin}[thm]{Definition}
\newtheorem{example}[thm]{Example}

\theoremstyle{remark}
\newtheorem{rem}[thm]{Remark}

\numberwithin{equation}{section}

\begin{document}

% \title[short text for running head]{full title}
\title[Scissors Congruence]{Scissors Congruence with Mixed Dimensions}

%    Only \author and \address are required; other information is
%    optional.  Remove any unused author tags.

%    author one information
% \author[short version for running head]{name for top of paper}
\author{Thomas G. Goodwillie}
\address{Department of Mathematics, Brown University, Providence, RI  02912}
\email{tomg@math.brown.edu}

\subjclass[2000]{Primary 52B45}

\date{May 31, 2016}

\begin{abstract}
We introduce a Grothendieck group $E_n$ for bounded polytopes in $\mathbb R^n$. It differs from the usual Euclidean scissors congruence group in that lower-dimensional polytopes are not ignored. We also define an analogous group $L_n$ using germs of polytopes at a point, which is related to spherical scissors congruence. This provides a setting for a generalization of the Dehn invariant. 
\end{abstract}

\maketitle

\tableofcontents

\parskip=4pt
\parindent=0cm

%%%%%%%%%%%%%%%%%%%%%%%%%%%%%%%%%%%%%%%%%%%%%%%%%%%%%%%%%%%%%%%%%%%%%%%%%%%%%%%%%%%%

%%%%%

\section*{Introduction}

This project began as an exploration of the following definition. Let $E_n$ be the abelian group with a generator $\lbrack P\rbrack_n$ for each polytope (compact  PL subspace) in $\mathbb R^n$ and relations
$$
\lbrack P\cup Q\rbrack_n=\lbrack P\rbrack_n+\lbrack Q\rbrack_n-\lbrack P\cap Q\rbrack_n,
$$
$$ 
\lbrack \emptyset \rbrack_n=0,
$$
and 
$$
\lbrack g(P)\rbrack _n=\lbrack P\rbrack _n
$$
when $g:\mathbb R^n\to\mathbb R^n$ is an isometry. 

The $n$th Euclidean scissors congruence group, denoted here by $\hat E_n$, is the quotient of $E_n$ obtained by setting $\lbrack P\rbrack_n$ equal to $0$ whenever the dimension of $P$ is less than $n$. In other words $\hat E_n$ is the cokernel of the map $E_{n-1}\to E_n$ that sends $\lbrack P\rbrack_{n-1}$ to $\lbrack P\rbrack_n$. Thus the groups $E_n$ represent a refinement of the idea of scissors congruence. It seems to be worth looking into what other features they might have, and what light they might shed on the groups $\hat E_n$.

The groups $E_n$ form a graded commutative ring $E$. Let $p\in E_1$ be given by a point in $\mathbb R^1$, so that the map $E_{n-1}\to E_n$ mentioned above is multiplication by $p$. The graded ring $\hat E=E/pE$ is the usual scissors congruence ring. Euclidean volume gives a ring map $\mathcal V:E\to \mathbb R$ that factors through $\hat E$, and Euler characteristic gives a ring map $\chi:E\to \mathbb Z$ that does not. Another feature is a graded involution, which we call $I$ (for ``interior''), that sends $\lbrack P\rbrack_n$ to $(-1)^m\lbrack \text{int}P\rbrack_n=(-1)^m(\lbrack P\rbrack_n-\lbrack \partial P\rbrack_n)$ whenever $P$ is an $m$-dimensional convex polytope in $\mathbb R^n$. We have
$$
\mathcal V(I\xi)=(-1)^n\mathcal V(\xi)
$$$$
\chi(I\xi)=\chi(\xi)
$$
for $\xi\in E_n$. One can assign an element $\lbrack P\rbrack_n\in E_n$ to a polytope $P$ in any Euclidean space, as long as the dimension of $P$ is at most $n$.

Investigation of $E$ leads inevitably to a related object, which we call $L$. This is created from germs of polytopes at a point in the same way in which $E$ is created from compact polytopes. (The letter ``$L$'' stands for ``local''.) Technically we choose to work not with germs but with cones: closed PL subspaces of $\mathbb R^n$ that are invariant under dilatation by positive real numbers. The group $L_n$ has a generator $\langle P\rangle_n$ for every polytope germ (or cone), with defining relations analogous to those for $E_n$. Denote by $t$ the element of $L_1$ given by the germ,  at the origin, of the origin in $\mathbb R^1$. The quotient $\hat L_n=L_n/tL_{n-1}$ may be identified with the $(n-1)$st spherical scissors congruence group. There is a ring map $\mathcal U:L\to \mathbb R$ sending a germ in $\mathbb R^n$ to the $(n-1)$-dimensional volume of the corresponding subset of the unit sphere, normalized so that the volume of the entire sphere is $1$. There are also two ring maps $\epsilon$ and $e$ from $L$ to $\mathbb Z$. The first, the \emph{yes-no invariant}, simply takes the germ of a polytope $P$ at a point $v\in P$ to $1$ (and takes the empty germ, the germ of any polytope $P$ at a point $v\notin P$, to $0$) . The second, the \emph{relative Euler invariant}, takes the germ of $P$ at $v\in P$ to the relative Euler characteristic $\chi(P,P-v)$. Like the graded ring $E$, $L$ has an ``interior'' involution called $I$. There is also a duality involution $D:L\to L$, defined by sending a convex cone $P$ to the dual cone $DP$, the set of all points $w$ such that the nearest point to $w$ in $P$ is the origin. These invariants and involutions are related by 
$$
\mathcal U(I\xi)=(-1)^n\mathcal U(\xi)
$$$$
\epsilon (I\xi)=e(\xi)
$$$$
e(I\xi)=\epsilon(\xi)
$$$$
\epsilon(D\xi)=\epsilon(\xi)
$$$$
e(D\xi)=(-1)^ne(\xi)
$$$$
(D\circ I)(\xi)=(-1)^n(I\circ D)(\xi)
$$
for $\xi\in L_n$. The quantity $\mathcal W=\mathcal U\circ D:L\to \mathbb R$, which we call the \emph{dual volume}, plays an important role.

The relevance of $L$ to $E$ has to do with the following generalization of the Dehn invariant. Suppose that $k$ is a commutative ring, and that $F:L\to k$ and $G:E\to k$ are (ungraded) ring maps. We sometimes write $G_n(P)$ for $G(\lbrack P\rbrack_n)$ when $P$ is a polytope in $\mathbb R^n$, and write $F_n(P)$ for $F(\langle P\rangle_n)$ when $P$ is a cone in $\mathbb R^n$. If $F$ and $G$ are compatible in a certain way then they can be combined to make a new ring map $F\star G:E\to k$, as follows. The compatibility condition is $F(s)=G(p)$, where $p\in E_1$ is the point element defined above and $s$ is the \emph{line element} of $L_1$, the element given by the germ of the whole of $\mathbb R^1$ at the origin. The rule is:
$$
(F\star G)_n(P)=\sum_{0\le j\le n}\ \sum_{\text{dim}\sigma=j}\ F_{n-j}(\nu(\sigma,P))\cdot G_j(\text{int}\sigma).
$$
Here $P$ is a polytope in $\mathbb R^n$, the summation is over simplices of any triangulation of $P$, $\nu(\sigma,P)$ is (the germ of) the \emph{normal cone} of the simplex $\sigma$ in $P$ (a cone in an affine space orthogonal to the plane of $\sigma$ -- see \S\ref{local} and \S\ref{normal cones} below) and $G_j(\text{int}\sigma)$ is $G_j(\sigma)-G_j(\partial\sigma)$. 
Likewise, if $F:L\to k$ and $G:L\to k$ are ring maps such that $F(s)=G(t)$, then we define a ring map $F\star G:L\to k$ by essentially the same rule. This $\star$-product of multiplicative invariants is the central feature of this work. 

The $\star$-product is associative. In fact, in the second case, when both $F$ and $G$ have domain $L$, this product is the composition law in a groupoid, which we call $\mathcal G^O_k$. Its morphisms are the ring maps $L\to k$. The objects are the elements of $k$. The source and target of $F$ are $F(s)$ and $F(t)$. The identity morphism of $a\in k$ is the invariant that sends $\langle P\rangle_n$ to $a^n$ for every nonempty germ $P\subset \mathbb R^n$. The conical volume $U$ is an $\mathbb R$-valued morphism from $1$ to $0$. The inverse of a morphism $F$ in the groupoid is given by composing the ring map $F:L\to k$ with the duality involution $D:L\to L$; in particular $\mathcal W$ is the inverse of $\mathcal U$.

These ideas have been seen before on the scissors congruence level \cite{Sah}. If we restrict attention to those $F$ such that $F(t)=0=F(s)$, then the groupoid becomes a group and the ring $L$ is replaced by $L/(tL+sL)=\hat L/s\hat L$. This is a Hopf algebra, and the ring $\hat E$ is a comodule. (In the present more refined setting we may say that $L$ is a Hopf algebroid, representing the affine groupoid scheme $k\mapsto \mathcal G_k^O$. The $\star$-product corresponds to a ring map $L\to L\otimes_{\mathbb Z\lbrack u\rbrack}L$, where the tensor product is formed using the ring maps $L\leftarrow \mathbb Z\lbrack u\rbrack\to L$ that take the indeterminate $u$ to $s$ and $t$ respectively.)

In addition to yielding various kinds of generalized and refined Dehn invariants, the $\star$-product has other uses. It can be used to give an account of the \emph{intrinsic volume}, a canonical way of extending $n$-dimensional volume to polytopes of arbitrary dimension \cite{Had}. It can also be used to show that the quotient ring map $E\to \hat E$ has a right inverse that is also a graded ring map. This means that $E$ is isomorphic to the polynomial ring $\hat E\lbrack x\rbrack$, with $p\in E_1$ corresponding to the indeterminate $x$, and in particular that for each $n$ the group $E_n$ is isomorphic to the direct sum of the scissors groups $\hat E_j$ for $0\le j\le n$.

Because polytopes of lower dimension are not being ignored, our refined construction has a more topological flavor than scissors congruence has. We pay some attention to questions about which elements of $E_n$ and $L_n$ are represented by manifolds without boundary, and we make occasional reference to relations with differential geometry and characteristic classes. 

It was not until a supposedly final version of this paper had been written that the author became fully aware of the degree of overlap between his work and some earlier work of Peter McMullen. Before we investigated this refinement of Euclidean scissors congruence, McMullen had investigated the corresponding refinement of translational scissors congruence. This led to a thorough revision of the paper, largely because we decided to adapt the $\star$-product so as to be able to apply it also in the setting of \cite{McM1}.

The paper is organized as follows:

\S 1 is concerned not with polytopes in $\mathbb R^n$ modulo Euclidean symmetry, nor yet with polytopes in $\mathbb R^n$ modulo translational symmetry, but rather with polytopes in an arbitrary piecewise linear space $X$ (not modulo any symmetry). We introduce a group $\mathcal P(X)$, a sort of Grothendieck group generated by compact PL subspaces of $X$. We show that $\mathcal P$ is a functor, and we exhibit several kinds of structure on it, including an external product $\mathcal P(X)\times \mathcal P(Y)\to \mathcal P(X\times Y)$, a filtration by dimension, and an ``interior'' involution $I$. We also introduce a boundary operator that becomes useful in \S 8.

It is not until the end of \S 1 that we specialize to the case of $\mathcal P(V)$ for a (real, finite-dimensional) vector space. The main object of study in \cite{McM1}, here denoted $\Pi(V)$, is the group of coinvariants for the action of translations on $\mathcal P(V)$, while our $E_n$ is the group of coinvariants for the action of all affine isometries on $\mathcal P(\mathbb R^n)$. The translational scissors congruence group of $V$, denoted here by $\hat \Pi(V)$, is the quotient of $\Pi(V)$ by the subgroup generated by polytopes of positive codimension in $V$.

In \S 2 we continue to work with a vector space $V$, introducing cones and germs, and (following the notation of \cite{McM1}) the cone group $\Sigma(V)$. Again $\Sigma$ is a functor, with external products, a dimension filtration, and an interior involution $I$. It also has a duality involution $D$. Our $L_n$ will be the group of coinvariants for the action of linear isometries on $\Sigma(\mathbb R^n)$.

\S 3 introduces the $\star$-product, but in a more general and more complicated form than what is described above. Now we consider functions $F$ which, rather than simply assigning an element $F_n(P)\in k$ to each cone $P\subset \mathbb R^n$, now assign an element $F_V(P)\in k$ to each cone $P\subset V$ where $V$ is any finite-dimensional vector subspace of $\mathbb R^\infty$ (subject to certain additivity and multiplicativity conditions, Definition \ref{morphism} below). The compatibility condition for defining $F\star G$ is $F_V(V)=G_V(0)$ for every $V$. These multiplicative cone functions are the morphisms of a groupoid $\mathcal G_k$. The objects are functions $O$ assigning an element $O_V\in k$ to each $V$ in such a way that orthogonal direct sums go to products. (The groupoid $\mathcal G_k^O$ discussed above is isomorphic to a subcategory of $\mathcal G_k$,  consisting of those objects and morphisms which are invariant under linear isometries.) There is also a product $F\star G$ defined when $F$ is such a multiplicative function of cones and $G$ is a suitable multiplicative function of polytopes: $G$ assigns an element $G_V(P)\in k$ to each polytope $P\subset V$ subject to additivity and multiplicativity conditions. Neither these functions of cones nor these functions of polytopes are required to be invariant under isometry, but the latter are required to be invariant under translation. 

In \S 4 we recall certain constructions of McMullen and use the $\star$-product to give a unified description of them. In particular we give a brief proof of one of the main results of \cite{McM1}: a canonical splitting of $\Pi(V)$ as the direct sum of $\hat \Pi(W)$ over all vector subspaces $W\subset V$. The later parts of this paper do not depend on \S 4.

The brief \S 5 investigates the properties of the relative Euler invariant. It is a morphism from $-1$ to $1$, and it is closely connected with the interior involution $I$.

In \S 6 we come at last to the Euclidean-invariant part of the story. We use the $\star$-product to define the intrinsic volume and various generalizations of the Dehn invariant. 

It is not until \S 7 that we introduce the graded rings $E$ and $L$. We name various elements and invariants, and we obtain the splitting result for $E$ that was mentioned above.

In \S 8 we continue to investigate $E$ and $L$, now with emphasis on elements represented by manifolds without boundary.

The author thanks Inna Zakharevich for sparking his interest in scissors congruence in the first place, Ben Wieland for many stimulating conversations at the beginning of this project, and the anonymous referee for making him acquainted with McMullen's work.

\section{The polytope group of a PL space}

Recall that two locally finite triangulations of a space are said to determine the same PL structure if they have a common refinement, and that a space equipped with such a structure is called a PL space. A map between PL spaces is called a PL map if in a neighborhood of each point in its domain it is simplicial for some triangulations within the PL structures. A subset $Y$ of a PL space $X$ is called a PL subspace if it has a (necessarily unique) PL structure such that the inclusion map $Y\to X$ is PL. An open subset of $X$ is always a PL subspace. A closed subset is a PL subspace if and only if it is a subcomplex for some triangulation within the PL structure of $X$.

To a PL space $X$ we will associate an abelian group $\mathcal P(X)$, a sort of Grothendieck group generated by the compact PL subspaces of $X$. We will show that $\mathcal P$ is a functor, and that it has several kinds of structure: an external multiplication, a filtration by dimension, and an involution $I$ that is related on the one hand to interiors of manifolds and on the other to local Euler characteristic.

This section provides a foundation for the rest of the paper and establishes some notation. Very little here is new. Many proofs could have been omitted by giving references, for example to \cite{McM1} or \cite {McM2}, but instead we have tried to give a streamlined and mostly self-contained treatment.

\subsection{Valuations of polytopes}

The compact PL subspaces of a PL space $X$ will be called the \emph{polytopes} in $X$.

Suppose that $A$ is an abelian group, and that $F$ is a function assigning an element of $A$ to every polytope $P\subset X$. $F$ is a \emph{valuation} if it satisfies
\begin{equation}\label{k2}
F(P\cup Q)=F(P)+F(Q)-F(P\cap Q)
\end{equation}
and
\begin{equation}\label{k0}
F(\emptyset)=0.
\end{equation}
One basic example is the Euler characteristic. Another, when $X=\mathbb R^n$, is the volume.

Equations (\ref{k2}) and (\ref{k0}) are the special cases $k=2$ and $k=0$ of the following more general statement, which they imply: 

If $P_1,\dots ,P_k$ are polytopes in $X$ then
\begin{equation}\label{genk}
F(P_1\cup\dots \cup P_k)=\sum_{S\neq\emptyset}(-1)^{|S|-1}F(\cap_{i\in S}P_i).
\end{equation}
Here $S$ runs through all nonempty subsets of $\lbrace 1,\dots ,k\rbrace$ and $|S|$ is the number of elements of $S$. This statement for $k>2$ follows from (\ref{k2}) by an easy induction with respect to $k$.

\subsection{The group $\mathcal P(X)$}

Let $P\mapsto (P)$ be the universal valuation for polytopes in $X$. It takes values in an abelian group $\mathcal P(X)$, the \emph{polytope group} of $X$. Thus $\mathcal P(X)$ has a presentation in which there is a generator $(P)$ for each polytope $P\subset X$, with defining relations
$$
(P\cup Q)=(P)+(Q)-(P\cap Q)
$$$$
(\emptyset)=0.
$$
It is clear that $\mathcal P(X)$ is generated by the elements $(\sigma)$ where $\sigma$ is a simplex of a triangulation of $X$ (in the given PL structure). There is the following precise formula. Recall that, for a simplex $\sigma$ in a simplicial complex $\mathcal K$, the \emph{star} is the subcomplex $\text{St}(\sigma, \mathcal K)$ consisting of all simplices having $\sigma$ as a face (including $\sigma$ itself) together with the faces of these, and the \emph{link} is the subcomplex $\text{Lk}(\sigma,\mathcal K)$ consisting of those simplices of $\text{St}(\sigma,\mathcal K)$ which are disjoint from $\sigma$. Thus $\text{St}(\sigma,\mathcal K)$ is the join of $\sigma$ and $\text{Lk}(\sigma,\mathcal K)$. 

\begin{prop}
For a triangulated polytope $P\subset X$ we have
\begin{equation}\label{triang}
(P)=\sum_{\sigma}\ (1- \chi \text{Lk}(\sigma,P))(\sigma)\in \mathcal P(X),
\end{equation}
where the sum is over all simplices $\sigma$ of the given triangulation of $P$ and $\chi \text{Lk}(\sigma,P)$ is the Euler characteristic of the link. 
\end{prop}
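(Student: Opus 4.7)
The plan is to reduce this identity to two simpler ingredients: a telescoping decomposition of $(P)$ into interiors of simplices, and a Möbius-type inversion expressing each interior in terms of closed faces.

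First I would establish the ``interior decomposition''
$$
(P) = \sum_\sigma \bigl[(\sigma) - (\partial \sigma)\bigr]
$$
by listing the simplices of the triangulation of $P$ in an order compatible with the face relation (each simplex appearing after all its proper faces). If $P_k$ denotes the subcomplex consisting of the first $k$ simplices and $\sigma_k$ is the $k$th simplex, then because every proper face of $\sigma_k$ lies in $P_{k-1}$, one has $P_{k-1} \cap \sigma_k = \partial\sigma_k$, so the defining valuation relation gives $(P_k) - (P_{k-1}) = (\sigma_k) - (\partial\sigma_k)$. Telescoping from $P_0 = \emptyset$ yields the claim.

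Next, for a single $n$-simplex $\sigma$ with codimension-one faces $\sigma_0,\dots,\sigma_n$, the general inclusion-exclusion formula \eqref{genk} applied to $\partial\sigma = \bigcup_i \sigma_i$ gives (after noting that the intersection of $|S|$ of these faces is the codimension-$|S|$ face obtained by deleting the corresponding vertices) the identity
$$
(\sigma) - (\partial\sigma) = \sum_{\tau \le \sigma} (-1)^{\dim\sigma - \dim\tau}(\tau).
$$
Substituting into the interior decomposition and interchanging the order of summation yields
$$
(P) = \sum_\tau \Bigl(\sum_{\sigma \ge \tau} (-1)^{\dim\sigma - \dim\tau}\Bigr)(\tau).
$$

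Finally, for a fixed $\tau$ every simplex $\sigma \ge \tau$ can be written uniquely as $\sigma = \tau * \rho$ with $\rho \in \mathrm{Lk}(\tau,P)$ (the case $\sigma = \tau$ corresponding to the empty $\rho$), and $\dim\sigma - \dim\tau = \dim\rho + 1$. Hence the inner sum equals $1 - \sum_{\rho \in \mathrm{Lk}(\tau,P)}(-1)^{\dim\rho} = 1 - \chi\mathrm{Lk}(\tau,P)$, giving the desired formula.

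None of the steps is a serious obstacle; the main point requiring care is the first one, namely verifying that an ordering of the simplices compatible with the face relation makes $P_{k-1} \cap \sigma_k$ equal to $\partial\sigma_k$ on the nose rather than to some larger subcomplex. Once this is in hand, the rest is bookkeeping with signs and the standard combinatorial identification of the link.
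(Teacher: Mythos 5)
Your proof is correct, but it takes a genuinely different route from the paper's. The paper argues by induction on the number of simplices in the triangulation: the key step is that the formula is stable under passing from two subcomplexes and their intersection to their union, which reduces to elementary facts about how $\text{Lk}(\sigma,-)$ behaves under union and intersection of subcomplexes. You instead compute directly: a telescoping argument over a linear extension of the face poset gives the interior decomposition $(P)=\sum_\sigma\bigl[(\sigma)-(\partial\sigma)\bigr]$, then inclusion-exclusion on the codimension-one faces gives $(\sigma)-(\partial\sigma)=\sum_{\tau\le\sigma}(-1)^{\dim\sigma-\dim\tau}(\tau)$, and interchanging the order of summation and recognizing the bijection between $\{\sigma\ge\tau\}$ and $\{\text{simplices of }\text{Lk}(\tau,P)\}\cup\{\emptyset\}$ produces the coefficient $1-\chi\text{Lk}(\tau,P)$. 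Your telescoping step is carefully justified (the ordering guarantees $P_{k-1}\cap\sigma_k=\partial\sigma_k$), and it is worth noting that this intermediate identity $(P)=\sum_\sigma(\text{int}\,\sigma)$ is exactly what the paper later reuses in proving Proposition \ref{makeI}, so your argument gives that for free. The two approaches are comparable in length; the paper's is tidier to write down once the link identities are accepted, while yours makes the combinatorics fully explicit without any auxiliary lemmas about links.
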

\begin{proof}This is by induction with respect to the number of simplices of the triangulation of $P$. The essential point is that the equation holds for $P\cup Q$ if it holds for subcomplexes $P$ and $Q$ and for their intersection. This uses the fact that for a simplex $\sigma$ in $P\cap Q$ we have 
$$
\text{Lk}(\sigma,P\cap Q)=\text{Lk}(\sigma,P)\cap \text{Lk}(\sigma, Q)
$$
and 
$$
\text{Lk}(\sigma,P\cup Q)=\text{Lk}(\sigma,P)\cup \text{Lk}(\sigma, Q),
$$
together with the fact that when $\sigma$ is in $P$ but not in $Q$ (resp. in $Q$ but not in $P$) then $\text{Lk}(\sigma,P\cup Q)$ is equal to $\text{Lk}(\sigma,P)$ (resp. $\text{Lk}(\sigma,Q)$). The induction begins with the easy case when $P$ is a simplex triangulated by its faces and the trivial case when $P$ is empty. 
\end{proof}
It can be useful to identify $\mathcal P(X)$ with a subgroup of the group $\text{Fun}(X,\mathbb Z)$ of all integer-valued functions on the point set of $X$. For a subset $Y\subset X$  let $i_Y:X\to\mathbb Z$ be the characteristic function of $Y$, defined by $i_Y(x)=1$ if $x\in Y$ and $i_Y(x)=0$ if $x\notin Y$. Since the mapping $P\mapsto i_P$ is a valuation, it determines a homomorphism
$$
\mathcal P(X)\xrightarrow{i} \text{Fun}(X,\mathbb Z)
$$
$$
(P)\mapsto i_P.
$$
\begin{lemma}\cite{Gr}\label{function}
The homomorphism $i$ is injective. 
\end{lemma}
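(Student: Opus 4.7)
The plan is to exhibit an explicit basis for $\mathcal P(X)$ whose images under $i$ are visibly linearly independent functions on $X$. Suppose $\xi\in\mathcal P(X)$ satisfies $i(\xi)=0$. Write $\xi=\sum_j n_j(P_j)$ as a finite combination of generators. Since only finitely many $P_j$ are involved, I would first choose a triangulation $\mathcal K$ of a compact PL subspace of $X$ containing $\bigcup_j P_j$, refined enough that each $P_j$ is a subcomplex of $\mathcal K$. Such a common refinement exists by the standard PL machinery already implicit in the definition of a PL space.

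Applying formula (\ref{triang}) to each $(P_j)$ rewrites $\xi$ as an integer combination of the simplex generators, so $\xi=\sum_\sigma m_\sigma(\sigma)$ in $\mathcal P(X)$. Now I would change basis to the ``open simplex'' classes $[\operatorname{int}\sigma]:=(\sigma)-(\partial\sigma)$, where $\partial\sigma$ denotes the union of proper faces of $\sigma$. A short induction on $\dim\sigma$, using inclusion--exclusion on proper faces, gives $(\sigma)=\sum_{\tau\le\sigma}[\operatorname{int}\tau]$, so there exist integers $c_\sigma$ with $\xi=\sum_\sigma c_\sigma[\operatorname{int}\sigma]$ in $\mathcal P(X)$.

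The final step is to evaluate $i$ on this expression. Since $i$ is a valuation, $i([\operatorname{int}\sigma])=i_\sigma-i_{\partial\sigma}$, which is precisely the characteristic function of the relative open simplex $\sigma^\circ$. The open simplices of $\mathcal K$ are pairwise disjoint (they partition $|\mathcal K|$), so the family $\{i_{\sigma^\circ}\}$ is linearly independent in $\operatorname{Fun}(X,\mathbb Z)$: evaluating $i(\xi)=\sum_\sigma c_\sigma i_{\sigma^\circ}=0$ at any chosen point of $\sigma^\circ$ forces $c_\sigma=0$. Hence $\xi=0$ in $\mathcal P(X)$.

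The substantive step is the reduction to a single triangulation, which rests on the existence of common refinements; once that is in hand, the proof is really just the observation that the $i_{\sigma^\circ}$ have disjoint supports. It is also worth noting that the argument produces more than injectivity of $i$: it shows that for any finite triangulation $\mathcal K$ of a compact PL subspace of $X$, the subgroup of $\mathcal P(X)$ generated by the subcomplexes of $\mathcal K$ is a free abelian group on the classes $[\operatorname{int}\sigma]$, a structural fact that will presumably be useful later.
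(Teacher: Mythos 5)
Your proof is correct and uses essentially the same approach as the paper's: both reduce $\xi$ to a linear combination of simplices of a single triangulation and then exploit the disjointness of simplex interiors to isolate coefficients. The paper's version is a shade more economical, evaluating $i(\xi)$ at an interior point of a simplex of maximal dimension among those with nonzero coefficient rather than performing the full change of basis to open-simplex classes; your extra work anticipates the structural observation that the paper records separately in Remark~\ref{basis}.
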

\begin{proof}Suppose that $\xi\in \mathcal P(X)$ is not zero. For some triangulation of some polytope $P\subset X$ we have $\xi=\sum_\sigma k_\sigma\cdot (\sigma)$, where $\sigma$ runs through the simplices of the triangulation and the coefficients $k_\sigma$ are integers. Choose a simplex $\tau$ of maximal dimension among all those for which $k_\tau\neq 0$. The function $i(\xi)= \sum_\sigma k_\sigma i_\sigma$ takes the value $k_\tau$ at points in the interior of $\tau$ and is therefore different from zero.
\end{proof}
We may consider a broader class of subsets of $X$ than the polytopes:
\begin{defin}
A subset of $X$ is \emph{constructible} if it is the union of the interiors of a finite set of simplices in some triangulation of $X$ (in the given PL structure).
\end{defin}
Note that when a subset of $X$ is a union of simplex interiors for one triangulation then it is also a union of simplex interiors for any finer triangulation. From this it follows that the union or intersection of two constructible sets, or the relative complement of one constructible set in another, is again constructible. We conclude:
\begin{prop}
The class of constructible sets in $X$ is the smallest class that contains the polytopes and is closed under the operations of pairwise union and intersection and relative complement. 
\end{prop}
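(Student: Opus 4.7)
The plan is to verify the two inclusions separately. For the first, I need to check that the class of constructible sets contains polytopes and is closed under the three operations; for the second, I need to check that any such class must contain every constructible set. Both halves will follow from elementary observations about triangulations, and the second half will rely on the fact that the interior of a simplex is a relative complement of polytopes.

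For the closure half, the text has already recorded the key point: by passing to a common refinement of two triangulations, any two constructible sets can be described with respect to a single triangulation, whence unions, intersections, and relative complements of constructible sets are again unions of simplex interiors for that common refinement. It remains only to check that every polytope $P \subset X$ is itself constructible; for this, choose a triangulation of $X$ in which $P$ is a subcomplex, and observe that the underlying point set of a subcomplex is the disjoint union of the interiors of its simplices.

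For the minimality half, let $\mathcal C$ be any class of subsets of $X$ that contains the polytopes and is closed under pairwise union, intersection, and relative complement. Given a constructible set $C = \bigcup_i \text{int}(\sigma_i)$ for simplices $\sigma_i$ in some triangulation of $X$, each closed simplex $\sigma_i$ and each boundary $\partial \sigma_i$ is a polytope, so $\text{int}(\sigma_i) = \sigma_i \setminus \partial \sigma_i$ lies in $\mathcal C$ by relative complement. Then $C$ itself lies in $\mathcal C$ by repeated pairwise union. This shows $\mathcal C$ contains every constructible set, so the class of constructible sets is indeed the smallest such class.

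There is no real obstacle here; the proposition is essentially a repackaging of the observation preceding it, and the argument reduces to the single identity $\text{int}(\sigma) = \sigma \setminus \partial \sigma$ together with the refinement trick for triangulations that the text has already invoked.
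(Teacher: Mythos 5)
Your proof is correct and matches the paper's reasoning exactly: the paper records the refinement observation just before the proposition and treats it as the justification, and your minimality argument via $\text{int}(\sigma) = \sigma \setminus \partial\sigma$ is the implicit step the paper leaves to the reader. Nothing is missing.
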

Clearly the characteristic function $i_Y$ of any constructible set $Y$ belongs to $i\mathcal P(X)$. If $Y$ is constructible, write $(Y)$ for the element of $\mathcal P(X)$ corresponding to $i_Y$. In particular we have $(P-A)=(P)-(A)$ when $P$ and $A$ are polytopes and $A\subset P$. Any valuation of polytopes in $X$ extends uniquely to a valuation of constructible sets in $X$. 
\begin{rem}When the Euler characteristic $\chi$ is extended in this way to constructible sets, $\chi(Y)$ is not what is usually called the Euler characteristic of the space $Y$. For polytopes $A\subset P$, $\chi(P-A)$ is the relative Euler characteristic of the pair $(P,A)$; this can also be described in terms of compactly supported cohomology of $P-A$. 
\end{rem}
\begin{rem}
A constructible set is the relative complement of some pair of polytopes if and only if it is relatively open in its closure.
\end{rem}

The following is clear:
\begin{prop}
The constructible subsets of $X$ are precisely those sets $Y\subset X$ such that $i_Y\in i\mathcal P(X)$. The elements of $i\mathcal P(X)$ are precisely those bounded functions \newline
$f:X\to\mathbb Z$ such that for each integer $k\neq 0$ the level set $f^{-1}(k)$ is constructible.
\end{prop}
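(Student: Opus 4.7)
The plan is to prove the two claims in turn, both resting on a single structural observation: any element $f = i(\xi) \in i\mathcal{P}(X)$ arises from a finite combination $\xi = \sum_\sigma k_\sigma (\sigma)$ in some triangulation of $X$, and for any point $x$ in the relative interior of a simplex $\tau$, we have $f(x) = \sum_{\sigma \supseteq \tau} k_\sigma$, a quantity depending only on $\tau$. Thus $f$ is constant on each open simplex of the triangulation. This was the crux of the proof of Lemma \ref{function} and drives everything here.

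For the first claim, the forward direction ($Y$ constructible $\Rightarrow i_Y \in i\mathcal{P}(X)$) has already been recorded before the proposition statement. For the converse, I would assume $i_Y = i(\xi)$ with $\xi = \sum_\sigma k_\sigma (\sigma)$ for some triangulation of $X$. By the constancy observation, $i_Y$ is constant on each open simplex; since it takes only the values $0$ and $1$, each open simplex is either contained in $Y$ or disjoint from $Y$. Hence $Y$ is the union of those open simplices where $i_Y$ equals $1$, a finite union of simplex interiors, so $Y$ is constructible.

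For the second claim, if $f = i(\xi)$ with $\xi = \sum_\sigma k_\sigma(\sigma)$ finite, then $|f(x)| \leq \sum_\sigma |k_\sigma|$ bounds $f$, and for each $k \neq 0$ the level set $f^{-1}(k)$ is a finite union of open simplices of the chosen triangulation, hence constructible. Conversely, suppose $f$ is bounded with $f^{-1}(k)$ constructible for every $k \neq 0$. Boundedness forces $f$ to take only finitely many nonzero values $k_1, \ldots, k_m$, and then $f = \sum_{j=1}^m k_j \cdot i_{f^{-1}(k_j)}$. By the first claim each $i_{f^{-1}(k_j)}$ lies in $i\mathcal{P}(X)$, so this finite $\mathbb{Z}$-linear combination shows $f \in i\mathcal{P}(X)$.

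There is no serious obstacle; the argument is bookkeeping layered on the constancy principle. The one mildly subtle point worth flagging is why the hypothesis in the second claim omits $k=0$: since $X$ need not be compact and elements of $i\mathcal{P}(X)$ have support contained in some polytope, the set $f^{-1}(0)$ typically contains an unbounded portion of $X$ and is not itself constructible. Restricting the hypothesis to $k \neq 0$ sidesteps this while still supplying the finite decomposition used above.
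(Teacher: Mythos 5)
Your proof is correct; the paper presents this proposition as ``clear'' and gives no proof to compare against. Your argument supplies the natural details, resting on the observation that $i(\xi)$ is constant on the open simplices of a triangulation in which $\xi$ is a finite $\mathbb Z$-linear combination of simplices (which is also what drives the injectivity proof for $i$ in Lemma~\ref{function}), and reducing the second claim to the first via the decomposition $f=\sum_j k_j\, i_{f^{-1}(k_j)}$.
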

Call the elements of $i\mathcal P(X)$ the \emph{constructible functions}.

We will sometimes use (\ref{genk}) in the following form. Suppose that the polytope $P$ contains polytopes $P_1,\dots ,P_k$. Then
\begin{equation}\label{exin}
F(P-\cup_{i=1}^k P_i)=\sum_S\ (-1)^{|S|}F(\cap_{i\in S}P_i).
\end{equation}
Here in contrast to (\ref{genk}) the summation is over \emph{all} subsets of $\lbrace 1,\dots ,k\rbrace$ (with the convention that $\cap_{i\in \emptyset}P_i=P$). Equations (\ref{genk}) and (\ref{exin}) are valid more generally when $P$ and the $P_j$ are constructible sets.
\begin{rem}\label{basis}
For any triangulation $\mathcal K$ in the PL structure of $X$, the subgroup of $\mathcal P(X)$ generated by the subcomplexes of $\mathcal K$ has a $\mathbb Z$-basis given by the simplices of $\mathcal K$. It has another $\mathbb Z$-basis given by the interiors of those simplices.
When $\mathcal L$ is a refinement of $\mathcal K$ then the subgroup of $\mathcal P(X)$ generated by simplices of $\mathcal K$ is a summand of the subgroup generated by simplices of $\mathcal L$. Thus $\mathcal P(X)$ can be expressed as a direct limit of free abelian groups with respect to split injections, and it follows that $\mathcal P(X)$ itself is free abelian (although there is no canonical $\mathbb Z$-basis).
\end{rem}

\subsection{Functoriality}

\begin{prop}
For a PL map $f:X\rightarrow Y$ there is a unique homomorphism \newline
$f_\ast:\mathcal P(X)\rightarrow \mathcal P(Y)$ such that whenever $f$ maps a simplex $\sigma\subset X$ linearly onto a simplex in $Y$ then $f_\ast(\sigma)=(f(\sigma))$. With this rule, $\mathcal P$ is a functor.
\end{prop}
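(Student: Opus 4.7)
The plan is to define $f_*$ via the injection $i:\mathcal P(X)\hookrightarrow \text{Fun}(X,\mathbb Z)$ of Lemma \ref{function}, setting
$$\bigl(f_*((P))\bigr)(y):=\chi\bigl(P\cap f^{-1}(y)\bigr)$$
for each polytope $P\subset X$ and point $y\in Y$. This makes sense since $P$ is compact and $f^{-1}(y)$ is closed, so their intersection is a polytope with well-defined Euler characteristic. Uniqueness is immediate: because $f$ is PL, there exist triangulations $\mathcal K$ of $X$ and $\mathcal L$ of $Y$ such that $f$ maps each simplex of $\mathcal K$ linearly onto a simplex of $\mathcal L$, and since the $(\sigma)$ for $\sigma\in\mathcal K$ generate $\mathcal P(X)$, the rule $f_*((\sigma))=(f(\sigma))$ then determines $f_*$.

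For existence I verify three things. First, constructibility of $y\mapsto\chi(P\cap f^{-1}(y))$. Take $\mathcal K,\mathcal L$ as above with $P$ a subcomplex of $\mathcal K$; because $f|_\sigma$ is a linear surjection onto $f(\sigma)$, it sends $\text{relint}(\sigma)$ onto $\text{relint}(f(\sigma))$, so for $y$ in the relative interior of a simplex $\tau\in\mathcal L$ the set $\text{relint}(\sigma)\cap f^{-1}(y)$ is empty unless $f(\sigma)=\tau$, and in that case is the relative interior of an affine slice of $\sigma$ of dimension $\dim\sigma-\dim\tau$, with valuation Euler characteristic $(-1)^{\dim\sigma-\dim\tau}$. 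Summing over the open-simplex decomposition of $P$ gives
$$\chi\bigl(P\cap f^{-1}(y)\bigr)=\sum_{\sigma\subset P,\ f(\sigma)=\tau}(-1)^{\dim\sigma-\dim\tau},$$
which is constant on each $\text{relint}(\tau)$, hence constructible. Second, the valuation relations pass through because $\chi$ is itself a valuation applied to $(P\cup Q)\cap f^{-1}(y)=(P\cap f^{-1}(y))\cup(Q\cap f^{-1}(y))$, so $f_*$ descends to a homomorphism $\mathcal P(X)\to\mathcal P(Y)$. Third, when $\sigma$ is a single simplex on which $f$ is linear, $\sigma\cap f^{-1}(y)$ is a nonempty compact convex polytope (Euler characteristic $1$) for $y\in f(\sigma)$ and empty otherwise, confirming $f_*((\sigma))=(f(\sigma))$.

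That $\text{id}_*=\text{id}$ is clear. The main obstacle I anticipate is functoriality $(gf)_*=g_*f_*$, as a naive direct approach would require a Fubini-type statement for Euler characteristics of fibers. The constructibility argument, however, supplies the explicit formula
$$f_*((P))=\sum_{\sigma\subset P}(-1)^{\dim\sigma-\dim f(\sigma)}\,(\text{relint}(f(\sigma)))\in\mathcal P(Y),$$
and the same fiber computation applied to the constructible set $\text{relint}(\eta)$ gives $g_*((\text{relint}(\eta)))=(-1)^{\dim\eta-\dim g(\eta)}(\text{relint}(g(\eta)))$. Choosing triangulations of $X$, $Y$, $Z$ on which $f$ and $g$ are simultaneously simplicial (available by standard PL arguments), applying $g_*$ termwise to the formula for $f_*((P))$, and using the exponent identity $(\dim\sigma-\dim f(\sigma))+(\dim f(\sigma)-\dim(gf)(\sigma))=\dim\sigma-\dim(gf)(\sigma)$, yields $g_*f_*((P))=(gf)_*((P))$, and the Fubini difficulty dissolves.
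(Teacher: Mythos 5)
Your proof follows the paper's approach exactly: define $f_\ast$ through the characteristic-function embedding $i:\mathcal P(X)\hookrightarrow\operatorname{Fun}(X,\mathbb Z)$ via the fiberwise Euler characteristic $y\mapsto\chi(P\cap f^{-1}(y))$, and then verify constructibility, the valuation property, the simplex normalization, identity, functoriality, and uniqueness. The only real difference is that the paper leaves most of these steps as ``clear,'' whereas you fill in the details — in particular, for functoriality the paper simply reduces to the case of a single simplex $\sigma$ with $f|_\sigma$ and $g|_{f(\sigma)}$ both simplicial (where $g_\ast f_\ast(\sigma)=(gf(\sigma))=(gf)_\ast(\sigma)$ is immediate), while you derive the explicit open-simplex formula $f_\ast(P)=\sum_{\sigma\subset P}(-1)^{\dim\sigma-\dim f(\sigma)}(\text{relint}\,f(\sigma))$ and compose termwise. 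Your route is a bit more work than strictly necessary, but it is correct and has the side benefit of making the constructibility of $if_\ast(P)$ concrete rather than asserted.
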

\begin{proof}
For a polytope $P\subset X$, define $f_\ast(P)$ by defining $if_\ast(P)\in \text{Fun}(X,\mathbb Z)$ to be the function whose value at a point $y\in Y$ is the Euler characteristic of $P\cap f^{-1}(y)$. It is clear that this is indeed a constructible function and that the rule associating this function to $P$ is a valuation. This yields a homomorphism $f_\ast$. It is clear that $f_\ast$ behaves as asserted when a simplex is mapped linearly onto a simplex. Certainly if $f$ is an identity map then $f_\ast$ is an identity map. To see that $(g\circ f)_\ast(P)=g_\ast( f_\ast(P))$, first consider the case when $f$ maps a simplex $P$ linearly onto a simplex $f(P)$ and $g$ maps $f(P)$ linearly onto a simplex $g(f(P))$. The statement is clear in this case, and the general case follows easily. The uniqueness is also clear.
\end{proof}

Note that when the PL space $P$ is compact  then a PL map $g:P\to X$ produces an element of $\mathcal P(X)$ by applying the homomorphism $g_\ast:\mathcal P(P)\to \mathcal P(X)$ to the element $(P)\in \mathcal P(P)$. Thus a polytope that is \emph{over} $X$ but not necessarily \emph{in} $X$ still determines an element of $\mathcal P(X)$.

We denote this element by $(g:P\to X)$. Sometimes we simply write $(P\to X)$ if there is no need to name the map from $P$ to $X$. We may even casually write $(P)\in \mathcal P(X)$ if the map $P\to X$ is understood.
For example, if $M\subset X$ is a compact PL manifold then the double $\mathcal DM=M\cup_{\partial M}M$ determines an element $(\mathcal DM)\in \mathcal P(X)$, equal to $2(M)-(\partial M)$.

\subsection{The product}

There is an external multiplication 
$$
\mathcal P(X)\times \mathcal P(Y)\to \mathcal P(X\times Y),
$$
distributive over addition, such that
$$
(P)\times (Q)= (P\times Q).
$$
This follows from the fact that for any polytope $Q\subset Y$ (resp. $P\subset X$) the function $P\mapsto (P\times Q)$ (resp. $Q\mapsto (P\times Q)$) is a valuation of polytopes in $X$ (resp. $Y$).

The product is natural: for maps $f:X\to X'$ and $g:Y\to Y'$ and elements $\xi\in \mathcal P(X)$ and $\eta\in \mathcal P(Y)$ we have
$$
f_\ast(\xi)\times g_\ast(\eta)=(f\times g)_\ast(\xi\times \eta)\in \mathcal P(X'\times Y').
$$ 
The product is commutative and associative in the appropriate senses. For example, for $\xi\in \mathcal P(X)$ and $\eta\in \mathcal P(Y)$ the elements $\xi\times \eta$ and $\eta\times \xi$ are related by the canonical isomorphism $\mathcal P(X\times Y)\cong\mathcal P(Y\times X)$. It is also unital: if $\ast $ is a one-point space and $1$ is the generator $(\ast)\in\mathcal P(\ast)\cong\mathbb Z$, then $\xi\times 1$ is related to $\xi$ by the canonical isomorphism $\mathcal P(X\times \ast)\cong \mathcal P(X)$.

 \subsection{The extension to pairs}\label{pairs}

The map $\mathcal P(A)\to \mathcal P(X)$ induced by inclusion of a PL subspace is injective, and we view it as an inclusion.  The \emph{support} of $\xi\in \mathcal P(X)$ is the closure of the set of all points $x\in X$ such that $(i\xi)(x)\neq 0$. This is a polytope. For a PL subspace $A$ of $X$ and an element $\xi$ of $\mathcal P(X)$, $\xi$ belongs to $\mathcal P(A)$ if and only if the support of $\xi$ is contained in $A$. It follows that
\begin{equation}\label{AB}
\mathcal P(A)\cap \mathcal P(B)=\mathcal P(A\cap B).
\end{equation}

Call $(X,A)$ a \emph{PL pair} if $X$ is a PL space and $A$ is a PL subspace. The pairs are the objects of a category in the usual way, and the category of PL spaces may be identified with the full subcategory given by the pairs $(X,\emptyset)$. The functor $\mathcal P$ extends to pairs by writing
$$
\mathcal P(X,A)=\mathcal P(X)/\mathcal P(A).
$$
The product extends to a product for pairs
$$
\mathcal P(X,A)\times \mathcal P(Y,B)\to \mathcal P(X\times Y,A\times Y\cup X\times B).
$$
\begin{prop}\label{excision}
When $X=A\cup B$ then the canonical map
$$
\mathcal P(B,A\cap B)\to \mathcal P(X,A)
$$
is an isomorphism if the $PL$ subspaces $A$ and $B$ are either both open or both closed.
\end{prop}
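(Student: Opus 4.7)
The plan is to split the isomorphism claim into injectivity and surjectivity; only the latter will use the hypothesis on $A$ and $B$. For injectivity, if $\xi \in \mathcal{P}(B)$ maps to $0$ in $\mathcal{P}(X,A) = \mathcal{P}(X)/\mathcal{P}(A)$, then viewed inside $\mathcal{P}(X)$ the element $\xi$ lies in $\mathcal{P}(A)\cap \mathcal{P}(B)$, which equals $\mathcal{P}(A\cap B)$ by (\ref{AB}). Hence $\xi$ already represents zero in $\mathcal{P}(B,A\cap B)$. This step works for any pair of PL subspaces, without the open/closed assumption.

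For surjectivity it suffices to show $\mathcal{P}(X) = \mathcal{P}(A) + \mathcal{P}(B)$, and since $\mathcal{P}(X)$ is generated by the classes $(P)$ of polytopes $P \subset X$, formula (\ref{triang}) reduces the problem to producing, for each such $P$, a triangulation in the PL structure of $X$ in which every simplex is contained entirely in $A$ or entirely in $B$. Given such a triangulation, $(P)$ becomes a $\mathbb{Z}$-linear combination of classes each of which lies in $\mathcal{P}(A)$ or in $\mathcal{P}(B)$.

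The two cases are handled differently. If $A$ and $B$ are both closed, a common refinement argument yields a triangulation $\mathcal{K}$ of $X$ in which $P$, $A$, and $B$ are all simultaneously subcomplexes; then each closed simplex of $\mathcal{K}$ has its interior either inside $A$ or disjoint from $A$, and similarly for $B$, so $X = A\cup B$ forces the simplex into $A$ or into $B$. If $A$ and $B$ are both open, Lebesgue's number lemma applied to the open cover $\{A,B\}$ of the compact polytope $P$ produces a $\delta > 0$ such that any subset of $P$ of diameter less than $\delta$ lies wholly in $A$ or in $B$; iterated barycentric subdivision of any initial triangulation of $P$ drives the mesh below $\delta$. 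In either situation a simplex $\sigma \subset A$ gives $(\sigma)\in \mathcal{P}(A)$: in the closed case because $\sigma$ is a subcomplex of $A$, and in the open case because an open subset is automatically a PL subspace, as noted at the start of the section. The only real obstacle is this subdivision step, and in each case it is a standard PL argument.
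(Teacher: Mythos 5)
Your proof is correct and follows essentially the same route as the paper: injectivity via (\ref{AB}), surjectivity by reducing to the subdivision/decomposition fact that each polytope in $X$ can be cut into pieces lying in $A$ or $B$. The paper's own proof is deliberately terse—it states the injectivity argument and then simply asserts the sufficient criterion for surjectivity ("every polytope in $X$ is the union of a polytope in $A$ and a polytope in $B$") without verifying it under the two hypotheses; your proposal supplies that verification (common refinement in the closed case, Lebesgue number plus barycentric subdivision in the open case), which is exactly what the paper leaves to the reader.
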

\begin{proof}It is always injective, by (\ref{AB}). It is surjective if every polytope in $X$ is the union of a polytope in $A$ and a polytope in $B$.
\end{proof}
The following will be used later in the case when $C$ is a point.
\begin{prop}\label{germ}
If $C\subset X$ is a closed set, then two polytopes $P$ and $Q$ in $X$ determine the same element of $\mathcal P(X,X-C)$ if and only if they have the same germ at $C$, that is, if and only if there is a neighborhood $N$ of $C$ such that $P\cap N=Q\cap N$. 
\end{prop}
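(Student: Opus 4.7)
The plan is to reduce both sides of the biconditional to the single geometric condition $T\cap C=\emptyset$, where $T$ denotes the support of $(P)-(Q)$ in $\mathcal P(X)$. First I would identify $T$ explicitly. Since $i:\mathcal P(X)\to\text{Fun}(X,\mathbb Z)$ is injective by Lemma~\ref{function}, the support of $(P)-(Q)$ is the closure of the set where $i_P-i_Q\ne 0$, namely the symmetric difference $P\triangle Q=(P\cup Q)-(P\cap Q)$. Hence $T=\overline{P\triangle Q}$, a polytope contained in $P\cup Q$ (it is a subcomplex in any triangulation for which both $P$ and $Q$ are subcomplexes).

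Next I would translate each side of the biconditional into a statement about $T$. By definition $\mathcal P(X,X-C)=\mathcal P(X)/\mathcal P(X-C)$, and $X-C$ is open, hence a PL subspace of $X$; so the support characterization stated at the beginning of \S\ref{pairs} gives that $(P)-(Q)\in\mathcal P(X-C)$ iff $T\subset X-C$, iff $T\cap C=\emptyset$. For the germ side, having the same germ at $C$ means there is an open neighborhood $N$ of $C$ with $N\cap(P\triangle Q)=\emptyset$; since $N$ is open and $X-N$ is a closed set containing $P\triangle Q$, it also contains $\overline{P\triangle Q}=T$, so this is equivalent to $N\cap T=\emptyset$. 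Such an open $N\supset C$ exists iff $C\subset X-T$, i.e.\ iff $T\cap C=\emptyset$. The two conditions agree, proving the proposition.

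I do not expect a real obstacle. The identification of the support of $(P)-(Q)$ uses only Lemma~\ref{function}, and the rest is point-set bookkeeping built around the elementary fact that an open set is disjoint from a subset iff it is disjoint from that subset's closure. The one point to be careful about is that the support criterion from \S\ref{pairs} is being applied to the open PL subspace $X-C$; this is legitimate because openness alone suffices for $X-C$ to be a PL subspace.
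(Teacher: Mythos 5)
Your proof is correct and follows the same route as the paper's: both sides of the equivalence are reduced, via the support characterization from \S\ref{pairs} and the identification of the support of $(P)-(Q)$ with $\overline{P\triangle Q}$, to the condition that this closed set is disjoint from $C$. The paper's proof is terser (two sentences) but rests on exactly the same observations, so the approaches are essentially identical; incidentally, your careful version shows that what is really used is the closedness of the support rather than the closedness of $C$.
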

\begin{proof}
The condition $(P)-(Q)\in \mathcal P(X-C)$ is equivalent to the statement that the support of $(P)-(Q)$ is disjoint from $C$. Since $C$ is closed, this is equivalent to the equality of the germs. 
\end{proof}

\subsection{The dimension filtration}

Let $\mathcal P_n(X)$ be defined in the same way as $\mathcal P(X)$, but using only polytopes of dimension $\le n$. This injects into $\mathcal P(X)$ because by the proof of Lemma \ref{function} the composed map $\mathcal P_n(X)\to Fun(X,\mathbb Z)$ is an injection. Thus we can identify $\mathcal P_n(X)$ with the subgroup of $\mathcal P(X)$ generated by all elements $(P)$ where $P\subset X$ is a polytope of dimension $\le n$. An element of $\mathcal P(X)$ belongs to $\mathcal P_n(X)$ if and only if the dimension of its support is at most $n$.

$\mathcal P_n$ is a subfunctor of $\mathcal P$. That is, the map $\mathcal P(X)\xrightarrow{f_\ast} \mathcal P(Y)$ induced by $X\xrightarrow{f} Y$ carries $\mathcal P_n(X)$ into $\mathcal P_n(Y)$.

The filtration is compatible with the multiplication and with the unit:
$$
\mathcal P_p(X)\times\mathcal P _q(Y)\subset\mathcal P_{p+q}(X\times Y)
$$
$$
1\in \mathcal P_0(\ast).
$$
The filtration may be extended to a functorial filtration for pairs with $\mathcal P_n(X,A)\cong\mathcal P_n(X)/\mathcal P_n(A)$.

\subsection{The involution $I$}
If $M$ is a PL manifold, we write $\partial M$ for its boundary and $\text{int} M$ for its interior $M-\partial M$. Thus if the polytope $M\subset X$ is a PL manifold of any dimension then $(\text{int}M)=(M)-(\partial M)\in \mathcal P(X)$. We will never refer to the interior of a point set except in the PL manifold sense.
\begin{prop}\label{makeI}
There is an involution $\mathcal P(X)\xrightarrow{I} \mathcal P(X)$ such that if $M\subset X$ is a compact PL manifold of any dimension $d$ then 
\begin{equation}\label{intM}
I(M)=(-1)^d(\text{int}M). 
\end{equation}
Moreover, for any polytope $P$ in $X$ we have
\begin{equation}\label{IVtriang}
I( P)=\sum_{\sigma}\ (-1)^{|\sigma|}(\sigma)
\end{equation}
where $\sigma$ ranges over the simplices of a triangulation of $P$ and $|\sigma|$ is the dimension of $|\sigma|$. 
\end{prop}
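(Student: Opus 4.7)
The plan is to take formula (\ref{IVtriang}) as the definition of $I(P)$ for a triangulated polytope $P$, then verify in turn that (i) the right-hand side depends only on $P$, (ii) $P\mapsto I(P)$ is a valuation (so extends to a homomorphism $\mathcal P(X)\to \mathcal P(X)$), (iii) this homomorphism is an involution, and (iv) it satisfies (\ref{intM}). The core observation driving all four steps is a pointwise calculation: via the injection $i$ of Lemma \ref{function}, the function attached to $\sum_{\sigma}(-1)^{|\sigma|}(\sigma)$ sends $x\in \text{int}\sigma_0$ to $\sum_{\sigma\ge \sigma_0}(-1)^{|\sigma|}$, and the simplices with $\sigma\ge \sigma_0$ are either $\sigma_0$ itself or joins $\sigma_0*\rho$ for $\rho\in \text{Lk}(\sigma_0,P)$, so this sum equals $(-1)^{|\sigma_0|}(1-\chi\,\text{Lk}(\sigma_0,P))$.

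For (i), since any two triangulations have a common refinement, it suffices to check invariance under refinement. Given a refinement $\mathcal L$ of $\mathcal K$ and a point $x$ with $x\in \text{int}\tau_0\subset \text{int}\sigma_0$, the local product structure of $P$ near $\sigma_0$ gives the join decomposition
$$
\text{Lk}(\tau_0,P)=\text{Lk}(\tau_0,\sigma_0)*\text{Lk}(\sigma_0,P),
$$
where $\text{Lk}(\tau_0,\sigma_0)$ is a PL sphere of dimension $|\sigma_0|-|\tau_0|-1$. Plugging this into $\chi(A*B)=\chi A+\chi B-(\chi A)(\chi B)$ together with $\chi(S^d)=1+(-1)^d$ yields, after a short calculation,
$$
(-1)^{|\tau_0|}(1-\chi\,\text{Lk}(\tau_0,P))=(-1)^{|\sigma_0|}(1-\chi\,\text{Lk}(\sigma_0,P)).
$$
Thus the two functions agree pointwise, and Lemma \ref{function} gives a well-defined element $I(P)\in\mathcal P(X)$.

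For (ii), given polytopes $P,Q\subset X$, pick a triangulation of $P\cup Q$ in which $P$, $Q$, and $P\cap Q$ are subcomplexes; partitioning the sum (\ref{IVtriang}) according to which subcomplex each simplex lies in, ordinary inclusion-exclusion on indices gives $I(P\cup Q)+I(P\cap Q)=I(P)+I(Q)$, and $I(\emptyset)=0$ is immediate. For (iii), applied to a single simplex $\sigma$ triangulated by its faces, formula (\ref{IVtriang}) gives $I((\sigma))=(-1)^{|\sigma|}(\text{int}\sigma)$; this follows from the calculation in paragraph~1 (the link of a proper face of $\sigma$ inside $\sigma$ is a PL disk with $\chi=1$) or, more elementarily, from $\sum_j\binom{m}{j}(-1)^j=0$ for $m>0$. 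Hence for a triangulated $P$,
$$
I^2(P)=\sum_\sigma (-1)^{|\sigma|}\,I((\sigma))=\sum_\sigma (\text{int}\sigma)=(P),
$$
using that $P$ is the disjoint union of its open simplices. For (iv), in a compact PL $d$-manifold $M$ each interior simplex has $\text{Lk}(\sigma,M)\cong S^{d-|\sigma|-1}$, contributing coefficient $(-1)^{|\sigma|}(1-(1+(-1)^{d-|\sigma|-1}))=(-1)^d$ to $(\text{int}\sigma)$, while each boundary simplex has $\text{Lk}(\sigma,M)$ a PL disk with $\chi=1$ and contributes $0$; summing gives $I(M)=(-1)^d(\text{int}M)$.

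The main obstacle is the refinement-invariance identity in step (i): everything else reduces to inclusion-exclusion or to the link-sphere/link-disk dichotomy for PL manifolds, but the join decomposition of links across subdivisions, combined with the join formula for Euler characteristics, is what makes $I(P)$ triangulation-independent and hence well-defined as an endomorphism of $\mathcal P(X)$.
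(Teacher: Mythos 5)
Your proof is correct, and it follows the alternate route that the paper's own Remark (immediately after the Proposition) flags as available but "slightly" harder: take (\ref{IVtriang}) as the definition and verify compatibility with refinement. The paper instead defines $iI(P)$ directly as the constructible function $x\mapsto\chi(P,P-x)$; because that formula makes no reference to a triangulation, well-definedness and the valuation property are immediate, (\ref{intM}) follows by inspection for manifolds, the involution is checked on a single simplex, and (\ref{IVtriang}) is then derived a posteriori by applying $I$ to $(P)=\sum_\sigma(-1)^{|\sigma|}I(\sigma)$. Your route front-loads the hard part (triangulation independence) and pays for it with the join decomposition $\text{Lk}(\tau_0,P)\cong \text{Lk}(\tau_0,\sigma_0)*\text{Lk}(\sigma_0,P)$ plus the join formula for $\chi$ — a correct but genuinely nontrivial piece of PL topology. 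It is worth noticing that the pointwise value you compute, $(-1)^{|\sigma_0|}(1-\chi\text{Lk}(\sigma_0,P))$, is exactly $\chi(P,P-x)$ for $x\in\text{int}\sigma_0$ (a small link of $x$ is $\partial\sigma_0 * \text{Lk}(\sigma_0,P)$, and $1-\chi(A*B)=(1-\chi A)(1-\chi B)$): had you made that identification, triangulation-independence would be automatic and your proof would collapse onto the paper's. As written, your approach buys a more hands-on combinatorial argument entirely in terms of triangulations, at the cost of invoking the subdivision-invariance of links; the paper's approach buys a cleaner foundation by working in $i\mathcal P(X)$ from the outset.
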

\begin{rem}
One could take (\ref{IVtriang}) as the starting point for defining $I$. Alternatively, one could begin by using (\ref{intM}) to specify $I(\sigma)$ for any simplex $\sigma$. In either case one would then need to verify compatibility with refinement (subdivision). It will be slightly easier to use the isomorphism $\mathcal P(X)\cong i\mathcal P(X)$ in defining $I$. 
\end{rem}
\begin{proof}
For any polytope $P\subset X$, define $I(P)$ by letting $iI(P)$ be the constructible function $X\to \mathbb Z$ that takes each point $x\in P$ to the relative Euler characteristic 
$\chi(P,P-x)$ and vanishes at all points outside $P$. This map $P\mapsto iI(P)$ is a valuation, and therefore it yields a group map $I:\mathcal P(X)\to \mathcal P(X)$.

When the polytope $M\subset X$ is a $d$-dimensional manifold, then Equation (\ref{intM}) holds because $iI(M)$ is the characteristic function of the interior of $M$ multiplied by $(-1)^d$.

To see that $I$ is an involution, evaluate $I\circ I$ on a simplex $\sigma$ of any dimension $d$, using (\ref{intM}) three times:
$$
(-1)^dI(I(\sigma))=I(\sigma)-I(\partial \sigma)=(-1)^d((\sigma) - (\partial \sigma))-(-1)^{d-1}(\partial \sigma)=(-1)^d( \sigma).
$$
For (\ref{IVtriang}), first note that
$$
(P)=\sum_\sigma\ ( \text{int}\sigma)=\sum_\sigma\ (-1)^{|\sigma|} I( \sigma)
$$ 
since $P$ is the union of the disjoint sets $\text{int} \sigma$. Then apply $I$ to both sides and use that $I\circ I$ is the identity. (Alternatively, (\ref{IVtriang}) can be proved by induction just like Proposition \ref{triang}.)
\end{proof}
In particular $I$ takes a simplex to the alternating sum of its faces:
\begin{equation}\label{Isimp}
I(\sigma)=\sum_{\tau\subset \sigma}\ (-1)^{|\tau|}(\tau).
\end{equation}
For any triangulation $\mathcal K$ of $X$, the involution $I$ preserves the subgroup of $\mathcal P(X)$ generated by the subcomplexes of $\mathcal K$, and up to sign it interchanges the two bases mentioned in Remark \ref{basis}.
\begin{rem}
In the special case when $X$ is a vector space, McMullen \cite{McM1} uses the term \emph{Euler map} for $I$ and for related involutions of quotient groups of $\mathcal P(X)$, and he denotes $I\xi$ by $\xi^*$. In a slightly different context \cite{Al}, such an involution has been called Euler-Verdier duality.
\end{rem}
\begin{prop}
The map $I$ is natural. It preserves the external product and its unit. It preserves the dimension filtration. 
\end{prop}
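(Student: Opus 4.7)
My plan is to reduce each of the four claims to the case of classes of single simplices, where the explicit formulas (\ref{intM}), (\ref{IVtriang}), and (\ref{Isimp}) make direct calculation possible. Since $\mathcal P(X)$ is generated by the simplex classes $(\sigma)$ of any triangulation (Remark \ref{basis}), checking each identity on such generators suffices.

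For naturality, I will choose triangulations of $X$ and $Y$ for which $f:X\to Y$ is simplicial. Given a simplex $\sigma$ of $X$ with image $\tau=f(\sigma)$, the restriction $f|_\sigma$ is affine, so each nonempty fiber is a convex polytope of Euler characteristic $1$; hence $f_\ast(\sigma)=(\tau)$. Combined with (\ref{Isimp}) this gives
\[
f_\ast(I(\sigma))=\sum_{\sigma'\subset\sigma}(-1)^{|\sigma'|}(f(\sigma')),\qquad I(f_\ast(\sigma))=\sum_{\tau'\subset\tau}(-1)^{|\tau'|}(\tau'),
\]
so the crux is to show that, for each face $\tau'$ of $\tau$, the coefficient $\sum_{\sigma'\colon f(\sigma')=\tau'}(-1)^{|\sigma'|}$ equals $(-1)^{|\tau'|}$. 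I plan to verify this by partitioning the vertex set of $\sigma$ according to its image in the vertex set of $\tau'$, so that the faces $\sigma'$ with $f(\sigma')=\tau'$ correspond to selections of a nonempty subset from each block; applying the identity $\sum_{\emptyset\neq S\subset A}(-1)^{|S|}=-1$ to each block, a short inclusion-exclusion produces the required sign.

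For the external product, bilinearity reduces the claim $I(\xi\times\eta)=I\xi\times I\eta$ to the case $\xi=(\sigma)$, $\eta=(\tau)$. The product $\sigma\times\tau$ is a convex polytope, hence a PL ball of dimension $|\sigma|+|\tau|$ whose PL interior is $\text{int}(\sigma)\times\text{int}(\tau)$; applying (\ref{intM}) on both sides then makes both expressions equal $(-1)^{|\sigma|+|\tau|}(\text{int}\sigma\times\text{int}\tau)$. Preservation of the unit is immediate since a point is a closed $0$-manifold, so $I(\ast)=(\ast)=1$. Preservation of the dimension filtration is also immediate from (\ref{IVtriang}), which writes $I(P)$ as a sum of classes of simplices of a triangulation of $P$, all of dimension at most $\dim P$. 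The main obstacle I anticipate is the combinatorial sign count in the naturality step when $f$ collapses $\sigma$ to a strictly lower-dimensional simplex and several faces $\sigma'$ contribute to a single $\tau'$; the remaining claims reduce cleanly to the formulas already established.
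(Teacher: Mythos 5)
Your proof is correct and follows essentially the same path as the paper: the paper itself reduces naturality to a simplex mapped linearly onto a simplex and suggests verifying it "by using (\ref{Isimp})," which is exactly the combinatorial sign count you carried out, and the external product and filtration arguments match the paper's. One small imprecision: in the naturality step you should partition the vertices of $\sigma$ lying over $V(\tau')$ (i.e., $\phi^{-1}(V(\tau'))$) into the blocks $\phi^{-1}(b)$, $b\in V(\tau')$, since faces $\sigma'$ with $f(\sigma')=\tau'$ must contain no vertex mapping outside $\tau'$; with that understood, your product-over-blocks identity $\prod_{b}(-1)=(-1)^{|V(\tau')|}$ gives the required coefficient.
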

\begin{proof}
Let $X\xrightarrow{f} Y$ be a PL map. To prove that $f_\ast(I(P)))=I(f_\ast (P))$ for all polytopes $P\subset X$, it suffices to consider the case when $P$ is a simplex such that $f$ maps $P$ linearly onto a simplex of $Y$ with vertices mapping to vertices. The reader may verify it in this case, perhaps either by considering Euler characteristics of the fibers of $\partial P\to f(P)$ or by using (\ref{Isimp}).

To see that
$$
I(\xi\times \eta)=I(\xi)\times I(\eta)\in \mathcal P(X\times Y)
$$
for $\xi\in \mathcal P(X)$ and $\eta\in \mathcal P(Y)$, it is enough to consider the case when $\xi$ and $\eta$ are given by simplices. Since simplices are manifolds, it holds in this case by (\ref{intM}), as the interior of the product of two manifolds is the product of the interiors. 

The rest is clear.
\end{proof}
Because $I$ is natural, it extends to a natural  involution $\mathcal P(X,A)\xrightarrow{I} \mathcal P(X,A)$ for $PL$ pairs.

\begin{rem}\label{boundary double}
If $M$ is a closed $d$-dimensional manifold then $I(M)=(-1)^d(M)$. If $M$ is any compact $d$-dimensional manifold then $(M)-(-1)^dI(M)=(\partial M)$ and \newline
$(M)+(-1)^dI(M)=(\mathcal D M)$ where $\mathcal DM=M\cup_{\partial M}M$ is the double of $M$. 
\end{rem}
\begin{prop}\label{topI}
The involution of $\mathcal P_n(X)/\mathcal P_{n-1}(X)$ induced by $I$ is multiplication by $(-1)^n$.
\end{prop}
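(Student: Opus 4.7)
The plan is to verify the claim on generators of $\mathcal P_n(X)/\mathcal P_{n-1}(X)$. Since this quotient is generated by images of $(P)$ for polytopes $P \subset X$ with $\dim P = n$ (lower-dimensional generators vanish), it suffices to show that for every such $P$, $I(P) \equiv (-1)^n(P) \pmod{\mathcal P_{n-1}(X)}$.

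Fix a triangulation $\mathcal K$ of $P$. Formula (\ref{IVtriang}) gives
$$
I(P) = \sum_{\sigma \in \mathcal K}\ (-1)^{|\sigma|}(\sigma).
$$
For any simplex $\sigma$ with $|\sigma| < n$, the element $(\sigma)$ lies in $\mathcal P_{n-1}(X)$, so these terms die in the quotient. Hence
$$
I(P) \equiv (-1)^n \sum_{|\sigma|=n}\ (\sigma) \pmod{\mathcal P_{n-1}(X)}.
$$
On the other hand, because $P$ is the disjoint union of the open simplices of $\mathcal K$, we have $(P) = \sum_\sigma (\text{int}\sigma)$ in $\mathcal P(X)$. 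For $|\sigma| < n$, $(\text{int}\sigma) = (\sigma) - (\partial\sigma)$ has support of dimension less than $n$, so lies in $\mathcal P_{n-1}(X)$; for $|\sigma| = n$, $(\partial\sigma)$ again lies in $\mathcal P_{n-1}(X)$. Therefore
$$
(P) \equiv \sum_{|\sigma|=n}\ (\sigma) \pmod{\mathcal P_{n-1}(X)}.
$$
Combining the two congruences yields $I(P) \equiv (-1)^n (P)$ in $\mathcal P_n(X)/\mathcal P_{n-1}(X)$, as required.

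There is no serious obstacle: the argument is just a direct comparison of two triangulation-based expansions, both of whose low-dimensional contributions are automatically killed by the filtration. The only point requiring a moment's care is the observation that the support of $(\text{int}\sigma)$, and hence of $(\partial\sigma)$, has dimension at most $|\sigma|$, so that the relevant terms indeed lie in $\mathcal P_{n-1}(X)$; this is immediate from the identification of $\mathcal P_k(X)$ with the subgroup whose supports have dimension at most $k$ established earlier in the dimension filtration subsection.
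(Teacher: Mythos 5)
Your argument is correct and rests on the same ideas as the paper's: reduce to a simplex-by-simplex comparison of $I$ with $(-1)^n$, using the fact that terms coming from simplices of dimension $< n$ die in the quotient. The paper streamlines this by noting directly that $\mathcal P_n(X)$ is generated modulo $\mathcal P_{n-1}(X)$ by $n$-simplices $\sigma$ and then computing $(\sigma)-(-1)^nI(\sigma)=(\partial\sigma)\in\mathcal P_{n-1}(X)$ from the manifold formula (\ref{intM}), which avoids writing out the two full triangulation expansions.
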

\begin{proof}
$\mathcal P_{n}(X)$ is generated modulo $\mathcal P_{n-1}(X)$ by elements $(\sigma)$ where $\sigma$ is an $n$-simplex. For these we have
$$
(\sigma)-(-1)^nI(\sigma)=(\partial\sigma)\in \mathcal P_{n-1}(X).
$$
\end{proof}
\begin{rem}\label{singularity}
If the polytope $M\subset X$ is such that outside a set of dimension $n$ it is a PL $d$-dimensional manifold without boundary  (that is, if there is an $n$-dimensional polytope $K\subset M$ such that $M-K$ is a $d$-dimensional manifold without boundary), then $(M)-(-1)^dI(M)\in \mathcal P_n(X)$. One way to see this is by comparing Equations (\ref{IVtriang}) and (\ref{triang}) term by term.
\end{rem}

\subsection{The boundary operator}\label{boundaries}

In view of Proposition \ref{topI} we may define a natural map
$$
\delta_n:\mathcal P_n(X)\to \mathcal P_{n-1}(X)
$$
by
$$
\delta_n(\xi)= \xi-(-1)^nI(\xi).
$$
This operator will be useful in \S\ref{int elem}. 

Since $I\circ I=\mathbb I$, we have $(\mathbb I-I)\circ (\mathbb I+I)=0=(\mathbb I+I)\circ(\mathbb I-I)$, and this gives 
\begin{equation}\label{dd}
\delta_n\circ\delta_{n+1}=0,
\end{equation}
\begin{equation}\label{Idelta_n}
I\circ\delta_n=(-1)^{n+1}\delta_n,
\end{equation}
and
\begin{equation}\label{delta_nI}
\delta_n\circ I=(-1)^{n+1}\delta_n.
\end{equation}
If $\xi\in \mathcal P_i(X)$ and $\eta\in \mathcal P_j(Y)$, then
\begin{equation}\label{prodrule}
\delta_{i+j}(\xi\times \eta)=\delta_i(\xi)\times \eta+(-1)^iI(\xi)\times \delta_j(\eta).
\end{equation}
All of this extends routinely to $PL$ pairs. 

By Remark \ref{boundary double}, if $M\to X$ is a PL map and $M$ is a compact manifold (with boundary) of dimension $d\le n+1$, then we have either
\begin{equation}\label{boundary}
\delta_{n+1}(M\to X)=(\partial M\to X),
\end{equation}
if $n-d$ is odd, or
\begin{equation}\label{double}
\delta_{n+1}(M\to X)=(\mathcal DM\to X),
\end{equation}
if $n-d$ is even. Thus in both cases this element of $\mathcal P_n(X)$ is given by a closed manifold over $X$ whose dimension is congruent $\text{mod}\ 2$ to $n$. Since $\mathcal P_{n+1}(X)$ is generated by simplices (and simplices are manifolds) it follows that the subgroup $\delta_{n+1}\mathcal P_{n+1}(X)\subset \mathcal P_{n}(X)$ is generated by (some) closed manifolds over $X$ whose dimensions are at most $n$ and congruent to $n$ $\text{mod}\ 2$.

Let $h_n(X,A)$ be the $n$th homology group of the chain complex 
$(\mathcal P_\bullet(X,A),\delta_\bullet)$. The remainder of this subsection is devoted to investigating this (not very extraordinary) homology theory. We show that it is a direct sum of suspensions of ordinary $\text{mod}\ 2$ homology, we exhibit it as a quotient of unoriented PL bordism, and we determine its multiplicative structure. In doing so we find (Corollary \ref{cycles}) that $ker(\delta_n)$ is generated by \emph{all} closed manifolds over $X$ with dimension at most $n$ and congruent to $n$  $\text{mod}\ 2$; in fact it is generated by those of dimension $n$.

None of the material in \S\ref{boundaries} after equation (\ref{prodrule}) will be used in the rest of the paper, and the impatient reader who is more interested in refinements of scissors congruence may skip to \S\ref{V polytope}. The impatient author has left many steps for the reader to fill in in the following arguments.

Note that $h_n(X,A)$ is killed by $2$, because if $\delta_n(\xi)=0$ then $2\xi=\xi+(-1)^nI(\xi)=\delta_{n+1}(\xi)$. 
\begin{prop}
$h_\bullet$ satisfies the axioms for a multiplicative homology theory on the category of PL pairs. 
\end{prop}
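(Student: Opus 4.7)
The plan is to verify the standard axioms for a multiplicative homology theory (functoriality, the long exact sequence of a pair, excision, homotopy invariance, additivity, and a natural graded commutative external product), exploiting throughout the already-noted fact that $h_\bullet$ is annihilated by $2$.

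Functoriality is formal: $\mathcal P_\bullet$ is a functor of PL pairs and $\delta_\bullet$ is natural, so $(\mathcal P_\bullet,\delta_\bullet)$ is a functor from PL pairs to chain complexes. For a pair $(X,A)$ the degreewise short exact sequence $0\to \mathcal P_\bullet(A)\to \mathcal P_\bullet(X)\to \mathcal P_\bullet(X,A)\to 0$ is visibly a short exact sequence of chain complexes, giving the natural long exact sequence in $h_\bullet$. Excision is inherited from Proposition \ref{excision}, which already produces a chain isomorphism $\mathcal P_\bullet(B,A\cap B)\cong \mathcal P_\bullet(X,A)$ in the required open/closed situations. Additivity on disjoint unions is immediate, since $\mathcal P_\bullet$ converts disjoint unions into direct sums of chain complexes.

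For homotopy invariance, given a PL homotopy $H\colon X\times [0,1]\to Y$ between $f_0$ and $f_1$, I would define a degree-one operator $s_n\colon \mathcal P_n(X)\to \mathcal P_{n+1}(Y)$ by $s_n(\xi)=H_\ast(\xi\times ([0,1]))$, and analogously on pairs. Applying the Leibniz rule (\ref{prodrule}) together with the identity $\delta_1([0,1])=(\{0\})+(\{1\})$ (a special case of (\ref{boundary}) since $\dim[0,1]=1$ and $n-d=-1$ is odd), I get
\begin{equation*}
\delta_{n+1} s_n(\xi)-s_{n-1}(\delta_n\xi)=(-1)^n\bigl((f_0)_\ast I(\xi)+(f_1)_\ast I(\xi)\bigr).
\end{equation*}
On a cycle $\xi$ we have $\delta_n\xi=0$, hence $I(\xi)=(-1)^n\xi$, so the right-hand side simplifies to $(f_0)_\ast\xi+(f_1)_\ast\xi$. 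Since $2\cdot h_n(Y)=0$, this gives $(f_0)_\ast[\xi]=(f_1)_\ast[\xi]$ in $h_n(Y)$.

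For the external product, formula (\ref{prodrule}) is exactly a twisted Leibniz rule for $\delta$ on $\times$. Reducing modulo $2$ kills both the sign and the $I$-twist, so $\times$ descends to a natural, associative, unital pairing $h_i(X,A)\otimes h_j(Y,B)\to h_{i+j}(X\times Y,\,A\times Y\cup X\times B)$. Graded commutativity follows from commutativity of $\times$ on chains together with the $\mathbb Z/2$-grading. The main obstacle in the whole argument is homotopy invariance: the cylinder construction produces only a chain homotopy twisted by $I$, and it is essential that $h_\bullet$ is $2$-torsion in order to absorb the twist and recover genuine homotopy invariance.
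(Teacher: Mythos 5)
Your proof is correct and follows essentially the same route as the paper: excision is inherited from Proposition \ref{excision}, and the twisted Leibniz rule (\ref{prodrule}) (together with (\ref{delta_nI})) is used both for the cylinder argument proving homotopy invariance and for showing the external product descends to $h_\bullet$. The one small variation is in the cylinder step: the paper suggests choosing $c\in\mathcal P_1(\Delta^1)$ with $\delta_1(c)$ equal to the \emph{signed difference} of the endpoints (so the usual boundary relation holds on cycles without invoking $2$-torsion), whereas you take $c=([0,1])$, obtain the sum $(\{0\})+(\{1\})$, and then use the fact that $h_\bullet$ is killed by $2$ to convert the sum into a difference; both variants are valid.
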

\begin{proof}
Excision holds by Proposition \ref{excision}. There is an external product 
$$
h_i(X,A)\times h_j(Y,B)\to h_{i+j}(X\times Y, A\times Y\cup X\times B),
$$
well defined because of (\ref{prodrule}) and (\ref{delta_nI}). This product is compatible with long exact sequences of pairs. One can establish homotopy invariance by showing that a PL homotopy $X\times \Delta^1\to Y$ gives a suitable chain homotopy; for this, by (\ref{prodrule}) and (\ref{delta_nI}), it is enough to verify that 
there is an element $c\in \mathcal P_1(\Delta^1)$ such that $\delta_1(c)\in \mathcal P_0(\Delta^1)$ is the difference between the endpoints. 
\end{proof}

The coefficient ring $h_\bullet(\ast)$, the homology ring of a point, is a polynomial ring $(\mathbb Z/2\mathbb Z)\lbrack t\rbrack$ with $t$ in degree $2$.

Let $\Omega^{PL}_\bullet$ be unoriented PL bordism theory, another multiplicative homology theory. There is a rather obvious map 
\begin{equation}\label{bord map}
\Omega^{PL}_n(X)\to h_n(X)
\end{equation}
sending the bordism class of $M\to X$ to the homology class of the cycle \newline
$(M\to X)\in \mathcal P_n(X)$. This extends to pairs, it is natural, and it is compatible with product and unit, and with connecting homomorphisms of pairs. In other words, it is a map of multiplicative homology theories.

The map $\Omega^{PL}_\bullet (\ast)\to h_\bullet(\ast)$ of coefficient rings is surjective, since the element of $\Omega^{PL}_2(\ast)$ given by $\mathbb RP^2$ goes to the polynomial generator $t\in h_2(\ast)$. Let $K$ be the kernel of this graded ring map. This graded ideal consists of those bordism classes whose representatives have even Euler characteristic. 

Consider the multiplicative homology theory $(\Omega^{PL}/K)_\bullet$ given by 
$$
(\Omega^{PL}/K)_\bullet(X,A)=(\Omega^{PL}_\bullet(\ast)/K)\otimes_{\Omega^{PL}_\bullet(\ast)}\Omega^{PL}_\bullet(X,A).
$$
The fact that this is a homology theory relies on the fact that $\Omega^{PL}_\bullet(X,A)$ is flat (in fact, free) as a module over $\Omega^{PL}_\bullet(\ast)$. This in turn relies on a theorem of Thom \cite{Thom}. For a relatively elementary treatment of this small part of Thom's work on cobordism, see Proposition \ref{Thom} below.
\begin{prop}\label{bord iso}
The map $(\Omega^{PL}/K)_\bullet \to h_\bullet$ induced by (\ref{bord map}) is an isomorphism of multiplicative homology theories.
\end{prop}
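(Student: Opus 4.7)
The plan is the standard comparison argument for generalized homology theories: once the map is known to be a transformation of multiplicative homology theories on PL pairs, it suffices to check that it induces an isomorphism on coefficients, and then a cellular/skeletal induction propagates the isomorphism to all pairs.

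First I would verify the coefficient case. The map $\Omega^{PL}_\bullet(\ast)\to h_\bullet(\ast)=(\mathbb{Z}/2)[t]$ is surjective, since $\mathbb{RP}^2$ already hits the polynomial generator $t$ and hence all powers $t^n$ are realized by products $(\mathbb{RP}^2)^n$. By the very definition of $K$ as the kernel of this ring map, the induced map
\[
(\Omega^{PL}/K)_\bullet(\ast)=\Omega^{PL}_\bullet(\ast)/K\longrightarrow h_\bullet(\ast)
\]
is then an isomorphism of graded rings.

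Next I would assemble the structural facts needed for the comparison. Both theories are multiplicative homology theories on PL pairs: for $h_\bullet$ this was established in the proposition preceding the statement, with excision coming from Proposition \ref{excision}; for $(\Omega^{PL}/K)_\bullet$ this uses flatness of $\Omega^{PL}_\bullet(X,A)$ as an $\Omega^{PL}_\bullet(\ast)$-module (invoked from Thom's work, to be treated in Proposition \ref{Thom}), so that tensoring with $\Omega^{PL}_\bullet(\ast)/K$ preserves the long exact sequences and excision isomorphisms. The map induced by (\ref{bord map}) respects pair sequences, external products, and the unit, and kills $K$ by definition, so it descends to a map of multiplicative homology theories $(\Omega^{PL}/K)_\bullet\to h_\bullet$.

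Finally I would run the comparison. For a PL pair $(X,A)$ with $X$ compact, choose a triangulation of $X$ restricting to one of $A$ and induct over the skeletal filtration $A=X_{-1}\subset X_0\subset\dots\subset X_N=X$. Excision together with the iterated suspension isomorphism (derived from the cone-on-boundary pair of a simplex and compatibility of the map with suspension) reduces the inductive step to the case of a point, which is handled above, and the Five Lemma applied to the long exact sequences of the pairs $(X_n,X_{n-1})$ propagates the isomorphism up the skeleta. The non-compact case follows from the compact case via the standard colimit argument, since both theories commute with filtered colimits of PL pairs. The main obstacle is the flatness hypothesis on $\Omega^{PL}_\bullet$ needed to make $(\Omega^{PL}/K)_\bullet$ a bona fide homology theory; granted that (from Proposition \ref{Thom}), the rest of the argument is the routine coefficient-plus-cellular-induction comparison of multiplicative homology theories.
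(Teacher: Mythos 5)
Your proof is correct and follows essentially the same approach as the paper: check the coefficient isomorphism, observe that the map is a transformation of homology theories satisfying the limit axiom, and invoke the standard comparison argument. The paper compresses your final paragraph (cellular induction, Five Lemma, colimit) into the single sentence ``Since both theories satisfy the limit axiom\dots this suffices,'' but the content is the same.
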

\begin{proof}
It is a map of such theories, and the map of coefficient rings is an isomorphism. Since both theories satisfy the limit axiom (homology of an infinite complex is the direct limit of homology of finite subcomplexes), this suffices.
\end{proof}
We now give another description of $h_n(X,A)$. Adjoin an indeterminate $u$ in degree one to ordinary $\text{mod}\ 2$ homology, obtaining the multiplicative homology theory
$$
H_n(-;(\mathbb Z/2)\lbrack u\rbrack)=H_n(-;\mathbb Z/2)\oplus H_{n-1}(-;\mathbb Z/2)u\oplus H_{n-2}(-;\mathbb Z/2)u^2\oplus \dots .
$$
Let $\beta $ be the $\text{mod}\ 2$ Bockstein operator. Let 
$$
h_n^\beta(X,A)\subset H_n(X,A;(\mathbb Z/2)\lbrack u\rbrack)
$$
be the subgroup consisting of all elements $\sum_j a_ju^j$ such that $\beta(a_{2k})=a_{2k+1}$ for all $k$. This gives a homology theory $h^\beta_\bullet(-)\subset H_\bullet(-;(\mathbb Z/2)\lbrack u\rbrack)$, isomorphic to $\bigoplus_{k\ge 0}H_{\bullet-2k}.$ It is closed under the product and contains the unit, so it is a multiplicative theory. The coefficient ring $h_\bullet^\beta(\ast)$ is $(\mathbb Z/2)\lbrack u^2\rbrack$.
\begin{prop}
$h_\bullet$ is isomorphic to $h^\beta_\bullet$ as a multiplicative homology theory.
\end{prop}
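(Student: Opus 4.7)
The plan is to use Proposition \ref{bord iso} to identify $h_\bullet$ with $(\Omega^{PL}/K)_\bullet$, and then to produce the desired isomorphism by constructing a natural transformation of multiplicative homology theories $\phi\colon\Omega^{PL}_\bullet\to h^\beta_\bullet$ that annihilates $K$. For a reference map $f\colon M^n\to X$ from a closed PL $n$-manifold I will set
$$
\phi\bigl(\lbrack M\xrightarrow{f}X\rbrack\bigr)=\sum_{k=0}^{n} f_\ast\bigl(w_k(M)\cap\lbrack M\rbrack_2\bigr)\,u^k,
$$
where $w_k(M)$ is the $k$th Stiefel--Whitney class and $\lbrack M\rbrack_2$ is the mod-$2$ fundamental class; for pairs one uses $(M,\partial M)\to(X,A)$ and the relative fundamental class.

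The first and main non-routine verification is that the coefficients $a_k=f_\ast(w_k(M)\cap\lbrack M\rbrack_2)$ satisfy the defining Bockstein relation $\beta(a_{2k})=a_{2k+1}$ of $h^\beta_n$. Using the naturality of $\beta=Sq^1$ on mod-$2$ homology together with the Cartan formula $Sq^1(\alpha\cap\gamma)=Sq^1(\alpha)\cap\gamma+\alpha\cap Sq^1(\gamma)$,
$$
\beta(a_{2k})=f_\ast\bigl(Sq^1(w_{2k})\cap\lbrack M\rbrack_2+w_{2k}\cap Sq^1\lbrack M\rbrack_2\bigr).
$$
Applying Wu's formula $Sq^1(w_{2k})=w_1w_{2k}+w_{2k+1}$ and the identity $Sq^1\lbrack M\rbrack_2=w_1(M)\cap\lbrack M\rbrack_2$ for a closed PL manifold (equivalent to $v_1=w_1$ via Poincar\'e duality), the two copies of $w_1w_{2k}\cap\lbrack M\rbrack_2$ cancel mod~$2$, leaving exactly $f_\ast(w_{2k+1}\cap\lbrack M\rbrack_2)=a_{2k+1}$.

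The remaining axiomatic properties of $\phi$ are routine. Bordism invariance uses $w_k(\partial W)=w_k(W)|_{\partial W}$ together with $i_\ast\partial=0$ in the long exact sequence of $(W,\partial W)$. Naturality in $X$, compatibility with connecting homomorphisms (via $\partial\lbrack M,\partial M\rbrack_2=\lbrack\partial M\rbrack_2$), and multiplicativity all follow from standard properties of fundamental classes, Stiefel--Whitney classes (Whitney sum), and cap/cross products. On a point $X=\ast$ only the top-degree term survives, so $\phi(\lbrack M^n\rbrack)=\chi(M)\cdot u^n$; hence $\phi$ kills the ideal $K\subset\Omega^{PL}_\bullet(\ast)$ of even-Euler-characteristic classes, and by multiplicativity it also kills $K\cdot\Omega^{PL}_\bullet(X,A)$. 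Thus $\phi$ descends to a natural transformation $(\Omega^{PL}/K)_\bullet\to h^\beta_\bullet$ of multiplicative homology theories, sending $t=\lbrack\mathbb RP^2\rbrack$ to $u^2$ and inducing the evident isomorphism $(\mathbb Z/2)\lbrack t\rbrack\cong(\mathbb Z/2)\lbrack u^2\rbrack$ on coefficient rings.

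Both theories satisfy the Eilenberg--Steenrod axioms and the limit axiom on PL pairs, so a natural transformation of multiplicative homology theories which is an isomorphism on coefficients is automatically an isomorphism everywhere, by the usual Mayer--Vietoris/Atiyah--Hirzebruch induction on skeleta combined with the five lemma. The hard part of the plan is the Bockstein verification in the second paragraph: the cancellation demands the precise shape of both Wu's formula and the $Sq^1$-identity for the fundamental class, and only works because $v_1=w_1$; without this nontrivial interaction between the mod-$2$ fundamental class and the first Stiefel--Whitney class, the natural characteristic-class formula would fail to land in $h^\beta_\bullet$ at all.
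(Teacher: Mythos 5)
Your proof is essentially identical to the paper's: the same reduction via Proposition \ref{bord iso}, the same characteristic-class map $(f\colon M\to X)\mapsto\sum_j f_\ast(w_j(M)\cap[M])u^j$, and the same verification of the Bockstein condition via the identities $\beta(w_{2k})=w_{2k+1}+w_1 w_{2k}$ and $\beta[M]=w_1(M)\cap[M]$ combined with the Cartan formula for $Sq^1$ on a cap product. You supply a bit more detail than the paper does on why the map is a well-defined transformation of homology theories (bordism invariance, compatibility with connecting maps), but the substance of the argument is the same.
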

\begin{proof}
It suffices to show that $(\Omega^{PL}/K)_\bullet$ is isomorphic to $h^\beta_\bullet$. For this we can argue as in the proof of Proposition \ref{bord iso} as soon as we have a map of multiplicative homology theories $\Omega^{PL}_\bullet\to h^\beta_\bullet$ such that the map of coefficient rings is surjective with kernel $K$. 

Map $\Omega^{PL}_n(X)$ to $h_n^\beta(X)$ by
$$
(f:M\to X)\mapsto \sum_j f_\ast(w_j(M)\cap \lbrack M\rbrack)u^j,
$$
where $w_j(M)\in H^j(M;\mathbb Z/2)$ is the tangent Stiefel-Whitney class. We must verify the assertion that the right-hand side belongs to $h_\bullet^\beta(X)$. In other words, we must verify that
$$
\beta f_\ast(w_{2k}\cap \lbrack M\rbrack)=f_\ast(w_{2k+1}\cap \lbrack M\rbrack).
$$
This follows from $\beta (w_{2k}\cap \lbrack M\rbrack)=w_{2k+1}\cap \lbrack M\rbrack$, which in turn follows from the standard identities
\begin{equation}\label{w}
\beta (w_{2k})=w_{2k+1}+w_1\cup w_{2k}
\end{equation}
and 
\begin{equation}\label{w1}
\beta\lbrack M\rbrack=w_1(M)\cap \lbrack M\rbrack ,
\end{equation}
since $\beta(a\cap x)=\beta (a)\cap x+a\cap \beta (x)$.

This map $\Omega^{PL}_\bullet\to h^\beta_\bullet$ extends routinely to pairs and gives a map of multiplicative homology theories. In the case of a point it takes the bordism class of the $n$-dimensional manifold $M$ to $\chi(M)u^n$, since the value of $w_n(M)$ on the fundamental class is the $\text{mod}\ 2$ Euler characteristic. Thus the map of coefficient rings is again surjective with kernel $K$. 
\end{proof}
\begin{rem}
We have used the Stiefel-Whitney classes of (the tangent microbundle of) a PL manifold. Let us define Stiefel-Whitney classes for spherical fibrations, therefore in particular for microbundles, and then justify (\ref{w}). If $E\to B$ is a fibration with fibers homotopy equivalent to $S^{d-1}$, then the $\text{mod}\ 2$ Thom class $\varphi\in H^d(E\to B)$ gives the Thom isomorphism
$$
H^j(B)\to H^{j+d}(E\to B).
$$$$
\alpha\mapsto \varphi\cup\alpha
$$
Define $w_j$ by $\varphi\cup w_j=Sq^j(\varphi)$. The identity (\ref{w}) follows from
$$
\varphi\cup w_{2k+1}  =  Sq^{2k+1}\varphi=Sq^1Sq^{2k}\varphi=Sq^1(\varphi\cup w_{2k}) 
 =  Sq^1\varphi\cup w_{2k}+\varphi\cup Sq^1 w_{2k}
 $$$$
 =\varphi\cup w_1\cup w_{2k}+\varphi\cup\beta w_{2k}=\varphi\cup(w_1\cup w_{2k}+\beta w_{2k}).
$$
We leave (\ref{w1}) as an exercise.
\end{rem}
Here is a consequence of Proposition \ref{bord iso}, in which the homology groups $h_n(X)$ are not explicitly mentioned.
\begin{cor}\label{cycles}
The subgroup of $\mathcal P_n(X)$ consisting of all elements $\xi$ that are fixed by the involution $(-1)^nI$ is the same as the subgroup $\mathcal M_n(X)$ generated by all elements $(M\to X)\in \mathcal P_n(X)$ such that $M$ is a closed PL $n$-manifold.
\end{cor}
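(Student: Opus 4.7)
The plan is to prove both inclusions separately. The inclusion $\mathcal M_n(X) \subseteq \ker(\delta_n)$ is immediate from Remark \ref{boundary double}: for a closed PL $n$-manifold $M$ over $X$ one has $I(M) = (-1)^n(M)$, and therefore $\delta_n(M \to X) = (M \to X) - (-1)^n I(M \to X) = 0$. The content lies in the reverse inclusion $\ker(\delta_n) \subseteq \mathcal M_n(X)$. For this I would argue via the homology theory $h_\bullet$: any $\xi \in \ker(\delta_n)$ represents a class $[\xi] \in h_n(X)$, so it suffices to establish (i) every class in $h_n(X)$ admits a cycle representative already lying in $\mathcal M_n(X)$, and (ii) the boundary subgroup $\delta_{n+1}(\mathcal P_{n+1}(X))$ is contained in $\mathcal M_n(X)$.

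For (i), I would invoke Proposition \ref{bord iso}, which identifies $h_n(X)$ with $(\Omega^{PL}/K)_n(X)$, a quotient of $\Omega^{PL}_n(X)$. The map (\ref{bord map}) sends the bordism class of $(M \to X)$ to the homology class of the cycle $(M \to X) \in \mathcal M_n(X) \subseteq \mathcal P_n(X)$, so by surjectivity every class in $h_n(X)$ has a representative that is a finite $\mathbb Z$-linear combination of closed $n$-manifolds over $X$, i.e.\ an element of $\mathcal M_n(X)$.

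For (ii), the discussion immediately following equations (\ref{boundary}) and (\ref{double}) already shows that $\delta_{n+1}(\mathcal P_{n+1}(X))$ is generated by elements $(N \to X)$ with $N$ a closed PL manifold of dimension $e \le n$ satisfying $e \equiv n \pmod 2$. It thus remains only to promote each such $N$ of dimension $e < n$ to a genuine closed $n$-manifold over $X$. For this I would use the direct image formula: for any closed PL manifold $Q$, the composition $N \times Q \to N \to X$ has fibers $f^{-1}(x) \times Q$ of Euler characteristic $\chi(f^{-1}(x))\chi(Q)$, so $(N \times Q \to X) = \chi(Q) \cdot (N \to X)$ in $\mathcal P(X)$. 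Taking $Q = \mathbb RP^{n-e}$, whose Euler characteristic is $1$ since $n-e$ is even, yields $(N \to X) = (N \times \mathbb RP^{n-e} \to X) \in \mathcal M_n(X)$, as desired.

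I expect the main obstacle to be conceptual rather than computational: the whole argument rests on Proposition \ref{bord iso}, which itself relies on Thom's theorem (the forthcoming Proposition \ref{Thom}) to ensure that $\Omega^{PL}_\bullet(X,A)$ is flat over $\Omega^{PL}_\bullet(\ast)$, so that $(\Omega^{PL}/K)_\bullet$ is honestly a homology theory. Once those inputs are in hand, the remaining steps are essentially bookkeeping with Euler characteristics of fibers.
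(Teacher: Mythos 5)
Your proposal is correct and follows essentially the same route as the paper: $\mathcal M_n(X)\subseteq\ker(\delta_n)$ by Remark~\ref{boundary double}, surjectivity of $\mathcal M_n(X)\to\ker(\delta_n)/\operatorname{im}(\delta_{n+1})$ via Proposition~\ref{bord iso}, and finally absorbing $\operatorname{im}(\delta_{n+1})$ into $\mathcal M_n(X)$ by crossing a lower-dimensional closed manifold with a closed even-dimensional manifold of Euler characteristic one (you use $\mathbb RP^{n-e}$; the paper uses $(\mathbb RP^2)^k$ with $n-e=2k$, which is the same trick). The identity $(N\times Q\to X)=\chi(Q)\cdot(N\to X)$ that you invoke is exactly what underlies the paper's observation $(\mathbb RP^2)=(\ast)\in\mathcal P(\ast)$ combined with naturality of the external product.
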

\begin{proof}The first of these subgroups is the cycle group $ker(\delta_n)$. Equation (\ref{boundary}) implies that $\mathcal M_n(X)\subset ker(\delta_n)$. By Proposition \ref{bord iso} the projection \newline
$\mathcal M_n(X)\to ker(\delta_n)/im(\delta_{n+1})$ is surjective. It remains to show that $\mathcal M_n(X)$ contains $im(\delta_{n+1})$. By (\ref{boundary}) and (\ref{double}), $im(\delta_{n+1})$ is generated by (some) elements $(M\to X)$ where $M$ is a closed manifold of dimension $n-2k\le n$, namely the boundary of an $(n+1-2k)$-simplex or the double of an $(n-2k)$-simplex. Replacing $M$ by $M\times (\mathbb RP^2)^k$ does not change the element $(M\to X)$, since $(\mathbb RP^2)=(\ast)\in \mathcal P(\ast)$, and therefore we may take $n-2k$ to be $n$.
\end{proof}
We now offer a self-contained justification for the step above that relied on Thom's work on smooth unoriented bordism. We use nothing about the mod $2$ Steenrod algebra except the relatively simple fact that any stable operation on $\text{mod}\ 2$ (co)homology is determined by what it does in the case of products of one or more real projective spaces. 

Suppose that $k_\bullet$ is a connective multiplicative homology theory. If $k_0(\ast)=\mathbb Z/2$, then the horizontal edge of a first-quadrant Atiyah-Hirzebruch spectral sequence gives a map $k_n(X,A)\to H_n(X,A;\mathbb Z/2\mathbb Z)$, the \emph{Hurewicz map} for $k_\bullet$. Let $\Omega^O_\bullet$ be smooth unoriented bordism.
\begin{prop}\label{Thom}
Let $k_\bullet$ be any connective multiplicative homology theory such that $k_0(\ast)=\mathbb Z/2$, and such that $k_\bullet$ admits a map $\Omega^O_\bullet\to k_\bullet$. Then for every pair $(X,A)$ the Hurewicz map for $k_\bullet$ is surjective and $k_\bullet(X,A)$ is a free module over $k_\bullet(\ast)$.
\end{prop}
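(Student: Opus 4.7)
The plan is to reduce to the classical unoriented version of Thom's theorem for $\Omega^O_\bullet$ itself, and then transfer to $k_\bullet$ along the given map by a scalar-extension argument. Specifically, I would first establish a natural isomorphism of graded $\Omega^O_\bullet(\ast)$-modules
$$\Omega^O_\bullet(X,A) \;\cong\; \Omega^O_\bullet(\ast) \otimes_{\mathbb{Z}/2} H_\bullet(X,A;\mathbb{Z}/2),$$
under which the Hurewicz map corresponds to projection onto the degree-zero factor. The argument picks bordism classes $[M_i]\in \Omega^O_{n_i}(\ast)$ represented by products of real projective spaces whose tangential Stiefel-Whitney numbers detect a set of generators of $H^\bullet(BO;\mathbb{Z}/2)$ as a module over the mod $2$ Steenrod algebra. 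The author's hint -- that any stable operation on mod $2$ (co)homology is determined by its values on products of real projective spaces -- is precisely the input needed to verify this detection. Once the splitting $H_\bullet(-;\mathbb{Z}/2)\to \Omega^O_\bullet(-)$ is in hand on coefficients, a standard induction on skeleta (equivalently, comparison of Atiyah--Hirzebruch spectral sequences) upgrades it to the claimed natural isomorphism.

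Given Thom's splitting, let $T_\bullet(X,A):=k_\bullet(\ast)\otimes_{\Omega^O_\bullet(\ast)}\Omega^O_\bullet(X,A)$, using the ring map $\Omega^O_\bullet(\ast)\to k_\bullet(\ast)$ induced by the hypothesis. By the freeness just established, $T_\bullet$ is a connective homology theory on CW pairs satisfying the limit axiom, and substitution yields
$$T_\bullet(X,A) \;\cong\; k_\bullet(\ast)\otimes_{\mathbb{Z}/2} H_\bullet(X,A;\mathbb{Z}/2).$$
The given map $\Omega^O_\bullet\to k_\bullet$ factors canonically as $\Omega^O_\bullet\to T_\bullet\to k_\bullet$, and this last natural transformation is the identity on coefficients. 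The standard comparison argument for homology theories (used already in the proof of Proposition \ref{bord iso}) then shows that $T_\bullet\to k_\bullet$ is an isomorphism on every CW pair. Consequently $k_\bullet(X,A)\cong k_\bullet(\ast)\otimes_{\mathbb{Z}/2} H_\bullet(X,A;\mathbb{Z}/2)$ naturally. Both conclusions of the proposition are now immediate: freeness over $k_\bullet(\ast)$ is visible, and under the isomorphism the Hurewicz map is identified with the projection onto the $k_0(\ast)\otimes H_\bullet(-;\mathbb{Z}/2)=H_\bullet(-;\mathbb{Z}/2)$ summand, which is surjective.

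The hard part will be the first step, Thom's splitting of $\Omega^O_\bullet$. The projective-space input ensures that the short list of explicit manifolds given by products of $\mathbb{RP}^n$'s carries enough tangential Stiefel-Whitney information to span $H_\bullet(BO;\mathbb{Z}/2)$ dually, from which the natural splitting is manufactured. Verifying that the resulting splitting is compatible with the full homology theory structure is essentially the content of Thom's wedge decomposition of the Thom spectrum $MO$ into mod $2$ Eilenberg--MacLane spectra, recast here in geometric terms; the remainder of the proof is a formal scalar-extension comparison.
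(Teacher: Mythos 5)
Your plan takes a genuinely different route from the paper's, and it has a serious gap: it begs the question. The point of this proposition, as the paper states immediately before it, is to give a \emph{self-contained} justification that avoids the hard part of Thom's theorem, using only the elementary fact that a stable $\text{mod}\ 2$ operation is determined by its values on products of real projective spaces. Your step~1 --- producing a natural isomorphism $\Omega^O_\bullet(X,A)\cong\Omega^O_\bullet(\ast)\otimes_{\mathbb Z/2}H_\bullet(X,A;\mathbb Z/2)$ --- is precisely the full Thom splitting of $MO$ into mod $2$ Eilenberg--MacLane spectra, and you acknowledge this yourself (``the hard part will be the first step''). But you do not prove it: you gesture at detection of generators of $H^\bullet(BO;\mathbb Z/2)$ over the Steenrod algebra by Stiefel--Whitney numbers of products of projective spaces, and assert that an induction on skeleta ``upgrades it to the claimed natural isomorphism.'' That upgrade is exactly where the genuine difficulty lies, and the hint about stable operations is nowhere near strong enough by itself to deliver a \emph{natural} (i.e.\ compatible-for-all-pairs) splitting; a collapsing spectral sequence over a field gives a filtration with split associated graded, but not a canonical splitting of the theory. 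Once step~1 is granted, your steps~2--3 (flat base change and coefficient comparison) are routine and correct, but they are not where the content is.

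The paper's argument is more economical and sidesteps all of this. It never tries to decompose $\Omega^O_\bullet(X,A)$. Instead it runs the Atiyah--Hirzebruch spectral sequence directly for $k_\bullet$, arguing by induction on $r$ that every differential $d^r$ vanishes. The key step: $d^r_{\bullet,0}$ is a stable operation $H_\bullet(-;k_0(\ast))\to H_{\bullet - r}(-;k_{r-1}(\ast))$, hence determined on products of real projective spaces; for such spaces every $\text{mod}\ 2$ homology class is carried by a smooth submanifold, so the Hurewicz map for $\Omega^O_\bullet$ is surjective there, and the given map $\Omega^O_\bullet\to k_\bullet$ forces surjectivity of the Hurewicz map for $k_\bullet$ there too, killing $d^r_{\bullet,0}$; multiplicativity of the spectral sequence then kills $d^r_{\bullet,j}$ for all $j$. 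This yields $E^2=E^\infty$ and hence freeness of $k_\bullet(X,A)$ over the graded field $k_\bullet(\ast)$, with no need for a natural splitting of $\Omega^O_\bullet$ at all. You should rework your argument to act directly on the AHSS for $k_\bullet$ rather than trying to first reprove Thom's theorem.
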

\begin{proof}(Sketch) 
For any given $(X,A)$, surjectivity of the Hurewicz map for all $n$ is equivalent to the statement that the spectral sequence has $E^2_{n,0}=E^\infty_{n,0}$ for all $n$. We prove by induction on $r$ that $E^2_{n,j}=E^r_{n,j}$. Assume that this holds for $r$ (for all pairs) and consider the differential $d_{n,0}^r:E_{n,0}^r\to E_{n-r,r-1}^r$. This map 
$$
d_{\bullet,0}^r:H_{\bullet}(-; k_0(\ast))\to H_{\bullet-r}(-;k_{r-1}(\ast))
$$
is a stable homology operation, so essentially an element of the Steenrod algebra (tensored with the $\mathbb Z/2\mathbb Z$ vector space $k_{r-1}(\ast)$). Therefore it is determined by its behavior on spaces that are products of real projective spaces. For such a space all of the $\text{mod}\ 2$ homology comes from smooth submanifolds, so that the Hurewicz map for $\Omega^O_\bullet$ and therefore also for $k_\bullet$ is surjective. This forces $d_{\bullet,0}^r$ to be zero for these spaces and therefore for all pairs. Multiplicativity implies that more generally $d_{n,j}^r=0$ for all $j$, and thus $E^2_{n,j}=E^{r+1}_{n,j}$.

Since $E^2=E^\infty$, we see that $k_\bullet(X,A)$ is isomorphic  as a module over $k_\bullet(\ast)$ to\newline
$H_\bullet(X,A;\mathbb Z/2\mathbb Z)\otimes_{\mathbb Z/2\mathbb Z}k_\bullet(\ast)$ and is therefore free.
\end{proof}

\subsection{Polytopes in a vector space}\label{V polytope}

We now specialize from PL spaces to vector spaces. Let $V$ be a finite-dimensional real vector space. 

A valuation of polytopes in $V$ is determined by its values on nonempty \emph{convex} polytopes. Furthermore, a function of nonempty convex polytopes in $V$ extends to a valuation of all polytopes in $V$ if and only if it satisfies (\ref{k2}) whenever $P$, $Q$, and $P\cup Q$ are convex. In fact, it is enough if it satisfies it in the special case when a convex polytope is cut in two pieces by a hyperplane:

\begin{lemma}\label{cutlemma}(\cite{McM1} \text{Lemma 2})
A function $F$ of nonempty convex polytopes in $V$ extends uniquely to a valuation of polytopes in $V$ if it satisfies 
\begin{equation}\label{loccut}
F(P)=F(P\cap H^+)+F(P\cap H^-)-F(P\cap H)
\end{equation}
whenever $P\subset V$ is a nonempty convex polytope of any dimension, $H\subset V$ is a codimension one affine subspace intersecting $P$, and $H^+$ and $H^-$ are the two closed half-spaces determined by $H$. 
\end{lemma}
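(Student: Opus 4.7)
The plan is to use triangulations to reduce everything to the hyperplane-cut hypothesis (\ref{loccut}), with the key intermediate step being a \emph{generalized cut formula} for arbitrary hyperplane arrangements inside a convex polytope. Uniqueness is formal: any polytope $P\subset V$ admits a triangulation by simplices, which are convex, so by iterating (\ref{k2}) (equivalently, by formula (\ref{triang}) applied to any hypothetical extension) any valuation extending $F$ is forced on $(P)$ by values of $F$ on those simplices.

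For existence, I would first upgrade (\ref{loccut}) to the following: if $P\subset V$ is a nonempty convex polytope cut by finitely many affine hyperplanes $H_1,\dots,H_m$ into closed convex pieces $P_1,\dots,P_k$ (the closures of the chambers of the induced arrangement on $P$), then
$$F(P) = \sum_{\emptyset\neq S\subset\{1,\dots,k\}} (-1)^{|S|-1}\, F\bigl(\bigcap_{i\in S} P_i\bigr),$$
with the convention $F(\emptyset)=0$. The proof is by induction on $m$: the case $m=1$ is exactly (\ref{loccut}), and for $m>1$ one applies (\ref{loccut}) along $H_m$ to write $F(P)=F(P\cap H_m^+)+F(P\cap H_m^-)-F(P\cap H_m)$, then invokes the inductive hypothesis on each of these three convex polytopes (each cut by $H_1,\dots,H_{m-1}$), and reassembles by sorting chambers of $P$ according to their sign relative to $H_m$.

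Given the generalized cut formula, I would define the extension $\tilde F$ on an arbitrary polytope $P\subset V$ by choosing a triangulation $\mathcal K$ of $P$ and setting
$$\tilde F(P) := \sum_{\emptyset\neq S}(-1)^{|S|-1}\, F\bigl(\bigcap_{\sigma\in S}\sigma\bigr),$$
summed over collections $S$ of simplices of $\mathcal K$ with nonempty intersection. Independence of $\mathcal K$ reduces, via a common refinement of any two triangulations, to invariance under refinement; but a simplicial refinement is obtainable by iterated affine hyperplane cuts of individual simplices, so the generalized cut formula gives precisely this invariance. The valuation property $\tilde F(P\cup Q)+\tilde F(P\cap Q)=\tilde F(P)+\tilde F(Q)$ and $\tilde F(\emptyset)=0$ then follow by choosing a triangulation of $P\cup Q$ in which $P$, $Q$, $P\cup Q$, and $P\cap Q$ are all subcomplexes and comparing the four inclusion-exclusion sums term by term.

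The main obstacle is the inductive step in the generalized cut formula: one must carefully track how chambers of the full arrangement on $P$ restrict to chambers of the sub-arrangements on $P\cap H_m^+$, $P\cap H_m^-$, and $P\cap H_m$, and verify that the three inductive inclusion-exclusion sums recombine to the desired sum over chambers of the full arrangement. Once this combinatorial bookkeeping is organized (most cleanly by indexing chambers by sign vectors in $\{+,-,0\}^m$), every subsequent step is a formal manipulation of triangulations.
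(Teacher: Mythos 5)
Your uniqueness argument is fine, and your ``generalized cut formula'' is the right intermediate statement: for convex $P$ cut by hyperplanes $H_1,\dots,H_m$ into closed chambers $P_1,\dots,P_k$, the hyperplane-cut hypothesis does imply $F(P)=\sum_{\emptyset\neq S}(-1)^{|S|-1}F(\bigcap_{i\in S}P_i)$, and this can be proved by induction on $m$ as you suggest (the bookkeeping you flag is real but manageable, because a chamber of the full arrangement never straddles $H_m$).

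The genuine gap is the sentence ``a simplicial refinement is obtainable by iterated affine hyperplane cuts of individual simplices.'' This is false, and it is exactly where your proof of triangulation-independence breaks. The elementary refinement of a simplicial complex is a stellar subdivision (``starring'') at a point $v$ interior to some simplex $\sigma$: one replaces $\sigma$, and every simplex having $\sigma$ as a face, by joins with $v$. The resulting simplices inside $\sigma$ are not the closed chambers of any hyperplane arrangement on $\sigma$; they are unions of such chambers. Concretely, starring a triangle $ABC$ at an interior point $v$ produces the three triangles $vAB$, $vBC$, $vCA$; but the only candidate hyperplanes are the lines through $vA$, $vB$, $vC$, and those three lines cut $ABC$ into \emph{six} chambers, each of your three triangles being a union of two of them. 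Hence the generalized cut formula, which speaks only of chambers, does not directly give $\tilde F_{\mathcal K}(P)=\tilde F_{\mathcal K'}(P)$ when $\mathcal K'$ is a starring of $\mathcal K$. Since (by Alexander's theorem) every common refinement is reached through stellar moves, this is not a peripheral case but the generic one.

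To repair the argument you would need an extra step: either prove directly that the inclusion-exclusion sum over the simplices of a triangulation of a convex polytope agrees with the inclusion-exclusion sum over the chambers of any hyperplane arrangement that refines it (so that your chamber formula can still be invoked as a common reference value), or prove invariance under starring by a local computation, for instance by first rewriting $\tilde F_{\mathcal K}(P)$ in the Euler-characteristic form $\sum_\sigma(1-\chi\text{Lk}(\sigma,\mathcal K))F(\sigma)$ as in Proposition~\ref{triang} and checking how the coefficients $1-\chi\text{Lk}(\sigma,\mathcal K)$ transform under a stellar move. A third option is to abandon the global triangulation bookkeeping altogether and argue by induction on $\dim V$, as in the classical Groemer/Volland-type treatments; this is closer in spirit to McMullen's own proof, to which the paper defers without reproducing it.
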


In other words, $\mathcal P(V)$ has an alternative, more economical, presentation using only the nonempty convex polytopes in $V$ as generators and using only the relations
$$
(P)=(P\cap H^+)+(P\cap H^-)-(P\cap H).
$$ 
\begin{rem}\label{d}
$\mathcal P(V)$ is generated by elements $(\sigma)$ where $\sigma\subset V$ is a (linear) simplex of any dimension up to $d=dim\ V$. In fact it is generated by $d$-dimensional simplices, because for $n<d$ any $n$-simplex is the intersection of two $(n+1)$-simplices whose union is an $(n+1)$-simplex.
\end{rem}
Note that the homomorphism $\mathcal P(V)\xrightarrow{f_\ast} \mathcal P(W)$ induced by a linear map 
$V\xrightarrow{f}W$ takes $(P)$ to $(f(P))$ if the polytope $P\subset V$ is convex.

For a non-zero real number $\lambda$, write 
$$
\mathcal P(V)\xrightarrow{\Delta(\lambda)} \mathcal P(V)
$$
for the group automorphism induced by the dilatation $v\mapsto \lambda v$ of $V$.

The group of translations of $V$ acts on $\mathcal P(V)$. The group of coinvariants for this action, denoted $\Pi(V)$, was the central object of study in \cite{McM1}. Our work here will place some constructions of McMullen in a broader context. However, our main focus will be on a smaller quotient $E(V)$, the group of coinvariants for the action of Euclidean isometries.

Both $\Pi(V)$ and $E(V)$ inherit external products, dimension filtrations, and interior involutions $I$ from $\mathcal P(V)$, as well as dilatation operators $\Delta(\lambda)$.

If $V$ is equipped with an inner product then the associated volume gives a homomorphism 
$$
\mathcal P(V)\xrightarrow{\mathcal V} \mathbb R.
$$
If more than one vector space is in play then we will sometimes write $\mathcal V_V$ to avoid ambiguity. 

By Proposition \ref{topI} the volume map satisfies 
\begin{equation}\label{VIpoly}
\mathcal V\circ I=(-1)^{d}\mathcal V
\end{equation}
where $d$ is the dimension of $V$, since $\mathcal V$ vanishes on the subgroup $\mathcal P_{d-1}(V)$ generated by polytopes of positive codimension.

The Euler characteristic map
$$
\mathcal P(V)\xrightarrow{\chi}\mathbb Z.
$$
satisfies 
$$
\chi\circ I=\chi.
$$

\begin{rem}
Although this will not play much of a role here, the group $\mathcal P(V)$ has a ring structure, which is inherited by the quotient $\Pi(V)$ (but not by the smaller quotient $E(V)$). The product is given by 
$$
(P)(Q)=(P+Q)
$$
when the polytopes $P$ and $Q$ are convex. Here $P+Q$ denotes $\lbrace x+y\ |\   x\in P, y\in Q\rbrace$, the \emph{Minkowski sum} of $P$ and $Q$. In other words, this internal product can be described by combining the external product
$$
\mathcal P(V)\times \mathcal P(V)\to \mathcal P(V\times V)
$$
with the map 
$$
\mathcal P(V\times V)\to \mathcal P(V)
$$
induced by vector addition.
\end{rem}

\subsubsection{Cell structures}\label{cells}

We will have occasion to consider cell structures on polytopes in $V$ for which the cells are convex polytopes but not necessarily simplices. Here are the relevant definitions.

A \emph{face} of a nonempty convex polytope $P\subset V$ is the subset of $P$ on which some linear function $\ell:V\to \mathbb R$ is maximized. In particular $P$ itself is a face. Zero-dimensional faces are called \emph{extreme points}. 

A \emph{cell complex} in $V$ is a finite set $\mathcal K$ of nonempty convex polytopes (the \emph{cells}) such that every face of every cell is the union of a set of cells and such that no point belongs to the interior of more than one cell. The union of the cells is then a polytope $P$, and we call $\mathcal K$ a \emph{cell structure} on $P$. When $\mathcal K$ and $\mathcal K'$ are two cell structures on $P$, we call $\mathcal K'$ a \emph{refinement} of $\mathcal K$ if each cell of $\mathcal K'$ is contained in some cell of $\mathcal K$. Any two cell structures on $P$ have a common refinement. A nonempty convex polytope has a cell structure in which the faces are the cells. 

A simplicial complex is a cell complex in which every cell is a simplex and every face of a cell is a cell. The resulting cell structure is a triangulation. Every cell structure has a refinement that is a triangulation.

\section{Cones}

We now consider PL cones in a vector space $V$. These arise in connection with polytopes in $V$, because they correspond to (translation classes of) germs of polytopes at points. There is a group analogous to $\mathcal P(V)$, defined using cones instead of polytopes. We follow McMullen in calling it $\Sigma(V)$. In addition to some features analogous to those of $\mathcal P(V)$ it has some features of its own, notably a duality involution. 

We adopt the convention that each vector space is equipped with an inner product. If $V$ and $W$ are two vector spaces then $V\times W$ is the product vector space equipped with the inner product determined by those of $V$ and $W$. If $V$ and $W$ occur as vector subspaces of a larger vector space $U$, then the vector space spanned by $V$ and $W$ will be denoted by $V\times W$ if (and only if) $V$ and $W$ are orthogonal in $U$. We extend this notation by writing $P\times Q$ for a subset of $V\times W$ determined by subsets $P\subset V$ and $Q\subset W$ of orthogonal vector spaces.

\subsection{Convex cones}

Call a set $P\subset V$ a \emph{convex cone} if, for some $k\ge 0$ and vectors $v_1,\dots ,v_k$, $P$ is the set of all linear combinations $\sum_{j=1}^kt_jv_j$ with $t_j\ge 0$. Convex cones may also be characterized as those sets of vectors definable by a finite set of conditions of the form $\ell (v)\ge 0$ with $\ell:V\to \mathbb R$ a linear map.

If the generating vectors $v_1,\dots ,v_k$ can be chosen to be linearly independent, then we call $P$ a \emph{conical simplex}. If $P$ contains no one-dimensional vector subspace of $V$ then we call $P$ an \emph{ordinary convex cone}. In general for a convex cone $P$ in $V$ there is a largest vector subspace $W$ contained in $P$, and $P$ is $W\times Q$ for some ordinary convex cone $Q$ in the orthogonal complement $W^\perp$. 

A subset of a convex cone $P$ is a \emph{face} of $P$ if it is the subset of $P$ where some linear function $\ell:V\to \mathbb R$ is maximized. Every face is again a convex cone. The largest face of $P$ is $P$, and the smallest is the largest vector space contained in $P$. 

Every convex cone is a PL manifold. Its boundary is nonempty except when the cone is a vector space.

\subsection{Duals of convex cones}(Except for minor notational differences, most of this can be found in \cite{McM1}.) For a subset $P\subset V$, define its \emph{dual}, $DP$, to consist of those vectors $w\in V$ such that for every $v\in P$ we have $\langle w,v\rangle\le 0$. If $P$ is a convex cone then $DP$ is again a convex cone and $DDP$ is $P$. In this case a point $w$ belongs to $DP$ if and only if, among all the points in $P$, the one nearest to $w$ is $0$.

We sometimes write $D_VP$ instead of $DP$ to avoid ambiguity. 

When $0\in P\subset V$ and $0\in Q\subset W$ with $V$ orthogonal to $W$, then
\begin{equation}\label{Dprod}
D_{V\times W}(P\times Q)=D_VP\times D_WQ.
\end{equation}

The dual cone of a vector subspace of $V$ is its orthogonal complement. In particular $D_VV=0$ and $D_V0=V$. If $V$ has dimension $d$ then the duals of the $d$-dimensional convex cones are the ordinary convex cones. In particular the dual of a $d$-dimensional ordinary convex cone is again a $d$-dimensional ordinary convex cone. The dual of a conical $d$-simplex (in a $d$-dimensional vector space) is a conical $d$-simplex.

If $P$ and $Q$ are two convex cones then $P\cap Q$ is again a convex cone. 

\begin{prop}\label{Dadd}
If $P$, $Q$, and $P\cup Q$ are convex cones, then $DP\cup DQ$ is also a convex cone and we have
\begin{equation}\label{Dcup}
D(P\cup Q)=DP\cap DQ
\end{equation}
\begin{equation}\label{Dcap}
D(P\cap Q)=DP\cup DQ.
\end{equation}
\end{prop}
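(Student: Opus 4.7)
The plan is to handle (\ref{Dcup}) first, since it falls directly out of the definitions and does not even require $P$, $Q$, or $P \cup Q$ to be convex cones. I would simply observe that $w \in D(P\cup Q)$ means $\langle w, v\rangle \le 0$ for every $v \in P\cup Q$, which is the conjunction of the same condition on $P$ and on $Q$, i.e., $w \in DP \cap DQ$.

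For (\ref{Dcap}), the inclusion $DP \cup DQ \subset D(P\cap Q)$ is immediate from monotonicity of $D$: since $P \cap Q$ is contained in both $P$ and $Q$, its dual contains both $DP$ and $DQ$. The substantive content is the reverse inclusion, and here is where the hypothesis that $P \cup Q$ is convex enters. I would argue by contradiction: suppose $w \in D(P\cap Q)$ but $w \notin DP \cup DQ$, so that there exist $v_1 \in P$ and $v_2 \in Q$ with $\langle w, v_1\rangle > 0$ and $\langle w, v_2\rangle > 0$. Consider the segment $\gamma(t) = (1-t)v_1 + tv_2$ for $t \in [0,1]$. Since $P \cup Q$ is convex, this segment lies in $P \cup Q$, so the closed sets
\[
A = \{\,t \in [0,1] : \gamma(t) \in P\,\},\qquad B = \{\,t \in [0,1] : \gamma(t) \in Q\,\}
\]
cover $[0,1]$. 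Connectedness of $[0,1]$ forces $A \cap B \ne \emptyset$, and any $t^* \in A \cap B$ yields $\gamma(t^*) \in P \cap Q$ with $\langle w, \gamma(t^*)\rangle = (1-t^*)\langle w,v_1\rangle + t^*\langle w,v_2\rangle > 0$, contradicting $w \in D(P \cap Q)$.

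The assertion that $DP \cup DQ$ is itself a convex cone then comes for free: $P \cap Q$ is the intersection of two convex cones, hence a convex cone, so its dual $D(P\cap Q)$ is a convex cone, and by what we just proved this dual equals $DP \cup DQ$.

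The only subtle step is the connectedness argument in the proof of (\ref{Dcap}); in particular it uses that convex cones (as defined in the preceding paragraph by finitely many nonnegative combinations, or equivalently by finitely many half-space conditions $\ell \ge 0$) are closed subsets of $V$, so that $A$ and $B$ are closed in $[0,1]$. Everything else is formal manipulation of the definition of $D$ together with the already-established fact that $DDP = P$ for convex cones.
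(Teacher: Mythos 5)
Your proof is correct and follows essentially the same approach as the paper's: (\ref{Dcup}) is a formal consequence of the definition, the reverse inclusion in (\ref{Dcap}) is established by finding a point of $P\cap Q$ on the segment from $v_1$ to $v_2$ using convexity of $P\cup Q$ and closedness of $P$ and $Q$, and the convexity of $DP\cup DQ$ follows because it equals $D(P\cap Q)$. The only difference is cosmetic: you spell out the connectedness argument that the paper leaves implicit, and you phrase the key step as a contradiction rather than a contrapositive.
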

\begin{proof}
Certainly (\ref{Dcup}) holds in general, as does the inclusion $DP\cup DQ\subset D(P\cap Q)$. If $v\notin DP\cup DQ$ then there exist $w_0\in P$ and $w_1\in Q$ such that $\langle v,w_0\rangle>0$ and $\langle v,w_1\rangle>0$. Since $P\cup Q$ is convex and $P$ and $Q$ are closed, there exists $w\in P\cap Q$ on the line segment joining $w_0$ and $w_1$. Then $\langle v,w\rangle>0$, so that $v\notin D(P\cap Q)$. This shows that (\ref{Dcap}) holds. It follows that $DP\cup DQ$ is a convex cone.
\end{proof}
 
\subsection{Cones}\label{cone cells}

Call a subset of $V$ a \emph{cone} if it is the union of finitely many convex cones. (With this convention the empty set is a cone, but not a convex cone.)

A \emph{conical cell complex} in $V$ is a finite set of convex cones (the \emph{cells}) such that every face of every cell is the union of a set of cells and no point belongs to the interior of more than one cell. The union of all the cells is then a cone, and we call the complex a \emph{conical cell structure} on that cone. When $\mathcal K$ and $\mathcal K'$ are two conical cell structures on $P$, we call $\mathcal K'$ a \emph{refinement} of $\mathcal K$ if each cell of $\mathcal K'$ is contained in some cell of $\mathcal K$. Any two conical cell structures on $P$ have a common refinement. A convex cone has a cell structure in which the cells are the faces. A \emph{conical simplicial complex} is a conical cell complex in which every cell is a conical simplex and every face of a cell is a cell. The resulting conical cell structure is called a \emph{conical triangulation}. Every conical cell structure has a refinement that is a conical triangulation.

\subsection{The local viewpoint}\label{local}

Say that subsets $A,B\subset V$ have the same \emph{germ} at $0$ if there is a neighborhood $N$ of $0$ such that $A\cap N=B\cap N$. The germ of any polytope at $0$ is the germ of a unique cone, and every cone arises in this way. Thus we could, if we preferred, work entirely with germs of polytopes at $0$ (or germs of closed PL subspaces of $V$ at $0$) instead of cones. In fact, the main reason for considering cones is that they correspond to germs. 

We may also consider the germ of a polytope $P\subset V$ at $v\in V$ if $v\neq 0$. Again this determines a cone in $V$, namely the cone corresponding to the germ at $0$ of the polytope obtained by translating $P$ by the vector $-v$. Let us call this the \emph{normal cone} of $v$ in $P$ and denote it by $\nu(v,P)$.

We may speak of the germ of the polytope $P$ at $v$ even in the case when $v\notin P$. In this case it is the empty germ, the germ of the empty set, and corresponds to the empty cone.

The convex cones are those cones which occur as $\nu(v,P)$ when $P$ is convex and $v\in P$. The ordinary convex cones are those which occur as $\nu(v,P)$ when $P$ is convex and $v$ is an extreme point of $P$. The conical simplices are those which occur as $\nu(v,P)$ when $P$ is a simplex and $v$ is one of its vertices.

When a polytope $P\subset V$ is given a cell structure in the sense of \S\ref{cells}, then the cone $\nu(v,P)$ acquires a conical cell structure whose conical cells are the convex cones $\nu(v,\sigma)$ corresponding to the germs at $v$ of those cells $\sigma$ which contain $v$. If the cell structure is a triangulation of $P$ and $v$ is a vertex, then the conical cell structure is a conical triangulation.

\subsection{Normal cones of cells}\label{normal cones}
We extend the notion of normal cone, defining $\nu(\sigma,P)$ whenever $\sigma $ is a nonempty convex polytope contained in $P$, for example a cell of some cell structure of $P$. Choose a point $v$ in the interior of $\sigma$. Let $W$ be the affine span of $\sigma$ and let the affine space $W^\perp$ be the orthogonal complement of $W$ at $v$. Define $\nu(\sigma,P)=\nu(v,P\cap W^\perp)$. This is independent of which point $v\in \text{int}\ \sigma$ is chosen. Note that $\nu(v,P)$ is the orthogonal product of $\nu(\sigma,P)$ and (the vector space parallel to) $W$. When $\sigma=v$ is a point, then $\nu(\sigma,P)$ is the cone $\nu(v,P)$ that was defined in \S\ref{local}.

If the polytope $P$ is convex then each $\nu(\sigma,P)$ is a convex cone. The normal cone of a face in a convex polytope is an ordinary convex cone.

\begin{rem}
For a face $F$ of a convex polytope $P$, McMullen refers to the cone denoted here by $\nu(F,P)$ as the \emph{inner cone} of the face. Its dual $D\nu(F,P)$ is his \emph{outer cone} $N(F,P)$.
\end{rem}

In the same way we may speak of $\nu(\sigma,P)$ when $P$ is a PL subspace of $V$ but not necessarily closed or bounded, and when $\sigma\subset P$ is convex but not necessarily closed or bounded. In particular when $P$ is a cone and $\sigma$ is a cell of a conical cell structure of $P$ then we define the normal cone $\nu(\sigma,P)$. 

If $\sigma$ and $\tau$ are convex sets in $P$ with $\sigma\subset \tau$, then $\nu(\sigma,\tau)$ is a convex set in $\nu(\sigma,P)$ and its normal cone $\nu(\nu(\sigma,\tau),\nu(\sigma,P))$ may be identified with $\nu(\tau,P)$.

\subsection{The spherical viewpoint}

A nonempty cone $P$ is determined by its intersection $P\cap S(V)$ with the unit sphere. Thus the nonempty cones correspond to certain subsets of $S(V)$, which will be called \emph{spherical polytopes}, and among these the convex cones correspond to what will be called \emph{spherical convex polytopes}. Call a set $A\subset S(V)$ \emph{spherically convex} if whenever it contains non-antipodal points $a$ and $b$ then it contains the shorter of the two great circle arcs connecting $a$ and $b$, and let the \emph{spherical convex hull} of a set in $S(V)$ be the smallest spherically convex set that contains it. The spherical convex polytopes are the spherical convex hulls of the finite subsets of $S(V)$. An ordinary convex cone corresponds to the spherical convex hull of a finite subset of the sphere that is contained in some open hemisphere. A set is a spherical polytope if it is the union of finitely many spherical convex polytopes.

A conical simplex corresponds to a spherical simplex, and a conical triangulation of a cone yields a triangulation of the corresponding subset of the sphere. A spherical polytope does not have a canonical PL structure (because spherical simplices cannot be systematically parametrized by linear simplices in a way that is compatible with subdivision), but it does have such a structure defined up to PL homeomorphism. With this structure, a spherical \emph{convex} polytope is always a PL disk of some dimension, except when it is the unit sphere $S(W)$ of some vector subspace $W\subset V$.

\subsection{The cone group $\Sigma(V)$}

This is defined exactly like $\mathcal P(V)$, but using cones instead of polytopes. We consider valuations of cones in $V$, meaning functions satisfying (\ref{k2}) and (\ref{k0}). We write $P\mapsto (P)\in \Sigma(V)$ for the universal example of such a valuation.

There is an alternative presentation of $\Sigma(V)$ using only \emph{convex} cones as generators and only a hyperplane cutting relation. That is, there is the following analogue of Lemma \ref{cutlemma}.
\begin{lemma}\label{cone cutlemma}
A function $F$ of convex cones in $V$ extends uniquely to a valuation of cones in $V$ if it satisfies 
\begin{equation}\label{cone loccut}
F(P)=F(P\cap H^+)+F(P\cap H^-)-F(P\cap H)
\end{equation}
whenever $P$ is a convex cone, $H\subset V$ is a codimension one vector subspace intersecting $P$, and $H^+$ and $H^-$ are the two closed half-spaces determined by $H$.
\end{lemma}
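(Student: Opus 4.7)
The plan is to mimic the proof of Lemma \ref{cutlemma}, with convex cones, linear hyperplanes, and conical cell structures in place of convex polytopes, affine hyperplanes, and cell structures. Uniqueness is immediate because, by definition, every cone $P\subset V$ is a finite union of convex cones, so by iterated inclusion-exclusion $(P)\in\Sigma(V)$ lies in the subgroup generated by the classes of convex cones. A valuation is therefore determined by its values on convex cones.

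For existence, my first step is to establish, using only (\ref{cone loccut}), the generalized inclusion-exclusion identity
$$
F(C)=\sum_{\emptyset\neq S\subset\{1,\dots,k\}}(-1)^{|S|-1}F\left(\bigcap_{i\in S}C_i\right)
$$
whenever $C_1,\dots,C_k$ and $C=\bigcup_i C_i$ are all convex cones. This is proved by induction on the number of distinct linear hyperplanes needed to carve the $C_i$ out of $C$. For the induction step, pick such a hyperplane $H$ that separates some pair of the $C_i$, apply (\ref{cone loccut}) to $C$ and to each of the convex intersections $\bigcap_{i\in S}C_i$, and invoke the induction hypothesis on the two half-space restrictions and on the cross-section by $H$.

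Next, extend $F$ to all cones. Given a cone $P$ with a conical cell structure $\mathcal{K}$ (\S\ref{cone cells}) having cells $\sigma_1,\dots,\sigma_m$, set
$$
\tilde F(P)=\sum_{\emptyset\neq S}(-1)^{|S|-1}F\left(\bigcap_{i\in S}\sigma_i\right);
$$
each intersection is again a convex cone, so each term is already defined. To check that this is independent of $\mathcal{K}$, use the common-refinement property of conical cell structures and reduce to comparing $\mathcal{K}$ with a refinement $\mathcal{K}'$ obtained by cutting one cell by a single linear hyperplane; invariance then follows from the first step applied inside the inclusion-exclusion sum.

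Finally, given two cones $P$ and $Q$, choose a common conical cell structure on $P\cup Q$ whose cells each lie entirely in $P\setminus Q$, entirely in $Q\setminus P$, or in $P\cap Q$. Computing all four quantities $\tilde F(P\cup Q)$, $\tilde F(P\cap Q)$, $\tilde F(P)$, $\tilde F(Q)$ from the same set of cells, the valuation identity $\tilde F(P\cup Q)+\tilde F(P\cap Q)=\tilde F(P)+\tilde F(Q)$ collapses to a purely combinatorial inclusion-exclusion identity for subsets of the index set of cells. The main obstacle lies in the well-definedness argument of the third step: one has to track carefully how (\ref{cone loccut}) propagates through the inclusion-exclusion sum when a single cell is refined, but in view of the second step this is essentially a bookkeeping exercise. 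One may also wish at that point to note the conical analogue of Lemma \ref{function} --- that $(P)\mapsto i_P$ injects $\Sigma(V)$ into $\mathrm{Fun}(V,\mathbb{Z})$ --- since it allows the well-definedness check to be repackaged as a statement about linear relations among characteristic functions.
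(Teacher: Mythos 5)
The paper does not prove this lemma; its polytope counterpart, Lemma~\ref{cutlemma}, is cited to McMullen's Lemma~2 in \cite{McM1}, and the conical version is stated as the analogue without argument. Your overall strategy---inclusion-exclusion over a conical cell structure, independence via common refinements, then the valuation identity---has the right shape and is essentially McMullen's.

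But step~4 is not the free combinatorial consequence you claim. Write $A$ and $B$ for the cells of $P$ and of $Q$ inside the common cell structure. Any $S$ containing at least one cell in $A\setminus B$ and at least one in $B\setminus A$ contributes to $\tilde F(P\cup Q)$ and to none of $\tilde F(P)$, $\tilde F(Q)$, $\tilde F(P\cap Q)$, so these crossing terms do not cancel by bookkeeping on the index set alone. They do vanish, but only after grouping by the value of $\cap_{i\in S}\sigma_i$ (which lands in $P\cap Q$) and performing a further inclusion-exclusion over the face poset. The efficient way to organize this is the paper's own device for (\ref{triang}) and (\ref{conetriang}): identify the coefficient of $F(\tau)$ in $\tilde F(X)$ with $1-\chi\,\text{Lk}(\tau,X)$, then use $\text{Lk}(\tau,P\cup Q)=\text{Lk}(\tau,P)\cup\text{Lk}(\tau,Q)$, $\text{Lk}(\tau,P\cap Q)=\text{Lk}(\tau,P)\cap\text{Lk}(\tau,Q)$, and additivity of $\chi$. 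Your step~2 is also where most of the genuine work of the lemma lives, and the sketch leaves it imprecise: the induction parameter is not pinned down, and you do not say why all three restrictions to $C\cap H^+$, $C\cap H^-$, $C\cap H$ fall under the inductive hypothesis. Finally, the closing appeal to the conical analogue of Lemma~\ref{function} is only progress if you then derive the needed linear relations among characteristic functions from (\ref{cone loccut}); as written it merely restates what you are trying to prove.
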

In analogy with Remark \ref{d} we have:
\begin{rem}\label{0d}
$\Sigma(V)$ is generated by elements $(\sigma)$ where $\sigma\subset V$ is a conical $n$-simplex for some $n\le d=\text{dim}\ V$. In fact it is generated by conical $d$-simplices and the conical $0$-simplex, because for $0<n<d$ any conical $n$-simplex is the intersection of two conical $(n+1)$-simplices whose union is a conical $(n+1)$-simplex. (This argument fails for $n=0$.)
\end{rem}
There is a conical analogue of Equation (\ref{triang}): for a cone $P$ in $V$,
\begin{equation}\label{conetriang}
(P)=\sum_{\sigma}\ (1- \chi Lk(\sigma,P))(\sigma)\in \Sigma(X),
\end{equation}
where the sum is now over all conical simplices $\sigma$ of a conical triangulation (not forgetting the conical $0$-simplex). 

The group $\Sigma(V)$ may be described as a subgroup of $Fun(V,\mathbb Z)$ by identifying a cone with its characteristic function. We may extend the definition of $(P)$ to conical constructible sets, sets obtainable from cones by finite set operations. We omit the details.
\begin{rem}
When taking the spherical point of view, one must bear in mind that every ``spherical constructible set'' corresponds to not one but two conical constructible sets, one containing the origin and one not.
\end{rem}
There is the option of identifying $\Sigma(V)$ with $\mathcal P(V,V-0)$, using Proposition \ref{germ} and the bijection between cones and germs of polytopes at $0$. 
(Note, however, that when a linear map $f:V\to W$ is not injective then the map $f_\ast:\Sigma(V)\to \Sigma(W)$ is not induced by a map of pairs $(V,V-0)\to (W,W-0)$.)

$\Sigma(V)$ has a dimension filtration 
$$
\Sigma_0(V)\subset \Sigma_1(V)\subset \Sigma_2(V)\subset \dots \subset\Sigma_d(V)=\Sigma(V)
$$
where $d$ is the dimension of $V$. Let $\hat\Sigma (V)=\Sigma_d(V)/\Sigma_{d-1}(V)$ be the top quotient in the filtration.

$\Sigma$ is functorial with respect to linear maps, and $\Sigma_n$ is a subfunctor.

We will not find it particularly useful here to view $\Sigma(V)$ as a ring, but we will make much use of the external product 
$$
\Sigma(V)\times \Sigma(W)\to \Sigma(V\times W)
$$
given by 
$$
(P)\times (Q)=(P\times Q).
$$
This is natural, and it respects the dimension filtration.

If $\Sigma(V)$ is identified with $\mathcal P(V,V-0)$ then the subgroup $\Sigma_n(V)$ is $\mathcal P_n(V,V-0)$. The external product for cones is an instance of the product for pairs as defined in \S\ref{pairs}:
$$
\mathcal P(V,V-0)\times \mathcal P(W,W-0)\to \mathcal P(V\times W,V\times W-(0,0)).
$$
From the spherical point of view the product is given by a join operation.

\begin{rem}
For each $n$-dimensional vector subspace $W\subset V$ there is an obvious map $\hat \Sigma(W)\to \Sigma_n(V)/\Sigma_{n-1}(V)$. The resulting map
$$
\bigoplus_{\text{dim}(W)=n} \hat \Sigma(W)\to \Sigma_n(V)/\Sigma_{n-1}(V)
$$
is rather clearly an isomorphism. The dimension filtration of $\Sigma(V)$ must split because these groups are all free abelian, so that there is an isomorphism (non-canonical, and therefore not all that interesting)
$$
\Sigma(V)\cong \bigoplus_{W\subset V}\hat \Sigma(W).
$$
\end{rem}

\subsubsection{Some maps out of $\Sigma(V)$}
There is a group homomorphism 
$$
\Sigma(V)\xrightarrow{\mathcal U} \mathbb R,
$$
the \emph{conical volume invariant}, defined as follows. For a cone $P\subset V$ let $\mathcal U(P)$ be the spherical volume of the intersection $P\cap S(V)$, normalized so that $S(V)$ has volume $1$. (In the special case when $V$ is zero-dimensional and the sphere $S(V)$ is empty a special convention is needed; we agree that in this case $\mathcal U(V)=1$ and $\mathcal U(\emptyset)=0$, just as in every other case.) This function is a valuation and so defines a homomorphism as above. Of course $\mathcal U$ factors through the projection $\Sigma(V)\to \hat\Sigma(V)$. Conical volume is multiplicative: we have
\begin{equation}\label{Vmult}
\mathcal U(P\times Q)=\mathcal U(P)\mathcal U(Q)
\end{equation}
for cones $P\subset V$ and $Q\subset W$ (with $V\perp W$).

There is a modest but useful group homomorphism 
$$
\Sigma(V)\xrightarrow{\epsilon} \mathbb Z
$$
taking every convex cone to $1$. More generally it takes every nonempty cone to $1$. We call it the \emph{yes/no invariant} because it takes a conical constructible set $P$ to $1$ or $0$ according to whether $0\in P$ or $0\notin P$.

There is a somewhat more interesting homomorphism 
$$
\Sigma(V)\xrightarrow{e} \mathbb Z,$$
the \emph{local Euler invariant}. It takes a nonempty cone $P$ to 
$$
e(P):=\chi(P,P-0)=1-\chi(P-0)=1-\chi(P\cap S(V)).
$$
Thus if $P$ is a convex cone then $e(P)=0$ unless $P$ is a vector subspace $W\subset V$, in which case $e(W)=(-1)^{\text{dim}(W)}$.

\subsubsection{The duality involution}

As a consequence of Proposition \ref{Dadd} there is a group homomorphism $D:\Sigma(V)\to\Sigma(V)$ such that whenever $P$ is a convex cone we have $D(P)=(DP)$. Clearly $D\circ D=\mathbb I$. By (\ref{Dprod}) the involution $D$ is compatible with the external product on $\Sigma$.

The yes/no invariant satisfies
$$
\epsilon\circ D=\epsilon.
$$
The local Euler invariant satisfies
\begin{equation}\label{eD}
e(D\xi)=(-1)^de(\xi),
\end{equation}
where $d$ is the dimension of $V$. To see this, it suffices to consider the case where $\xi=(P)$ for a convex cone $P\subset V$, in other words to show that for every such $P$ we have
$$
e(D_VP)=(-1)^{\text{dim}(V)}e(P).
$$
If $P$ is not a vector subspace of $V$ then $e(P)=0$ and $e(D_VP)=0$. For a vector subspace $W$ we have $e(W)=(-1)^{\text{dim}(W)}$ and $e(D_VW)=e(W^\perp)=(-1)^{\text{dim}(V)-\text{dim}(W)}$.

Composition of conical volume with duality gives an important homomorphism, the \emph{dual volume invariant},
$$
\Sigma(V)\xrightarrow{\mathcal W=\mathcal U\circ D} \mathbb R.
$$
By (\ref{Dprod}) and (\ref{Vmult}) it is multiplicative:
$$
\mathcal W(P\times Q)=\mathcal W(P)\mathcal W(Q).
$$
For the trivial subspace $0\subset V$ we have 
$$
\mathcal W(0)=\mathcal U(D0)=\mathcal U(V)=1.
$$
It follows that the number $\mathcal W(P)$ does not depend on the ambient vector space: if $P$ is a cone in $V$ then for any $W$ the cone $P\times 0\subset V\times W$ satisfies
$$
\mathcal W(P\times 0)=\mathcal W(P)\times \mathcal W(0)=\mathcal W(P).
$$
\begin{example}
For the conical $0$-simplex (the origin), $\mathcal W(0)=1$. For a conical $1$-simplex $\sigma^1$ (a half-line), $\mathcal W(\sigma^1)=\frac{1}{2}$. For a conical $2$-simplex $\sigma^2_\theta$, a sector with angle $\theta$, $\mathcal W(\sigma^2_\theta)=\frac{\pi-\theta}{2\pi}$. 
\end{example}
\begin{rem}
It should be emphasized that, whereas the dual volume of a \emph{convex} cone $P$ is defined geometrically as the conical volume of $DP$, the extension to general cones is an algebraic process; for a general cone $P$ the dual volume is not simply the conical volume of some geometric object associated with $P$.
\end{rem}
\begin{example}For a cone $M$ that is a $2$-manifold without boundary (corresponding therefore to the germ of a PL surface at an interior point), $\mathcal W(M)=1-\frac{\theta}{2\pi}$ where $\theta$ is the total angle around the point.
\end{example}

\subsubsection{The involution $I$}\label{interior cone}

There is an involution of $\Sigma(V)$ analogous to the interior involution of $\mathcal P(V)$, and again denoted by $I$. This may be defined either by using the isomorphism $\Sigma(V)\cong \mathcal P(V,V-0)$ or by adapting the proof of Proposition \ref{makeI} to cones. It respects external products and the dimension filtration. There is the following analogue of (\ref{IVtriang}):
For any cone $P$ in $V$,
\begin{equation}\label{IVconetriang}
I( P)=\sum_{\sigma}\ (-1)^{|\sigma|}(\sigma).
\end{equation}
This looks like (\ref{IVtriang}), but now $\sigma$ ranges over all of the conical simplices of a conical triangulation of $P$ (including the $0$-dimensional conical simplex if $P\neq\emptyset$). We omit the proof, which is essentially the same as that of (\ref{IVtriang}).

For any $\xi\in \Sigma(V)$
\begin{equation}\label{VIcone}
\mathcal U(I(\xi))=(-1)^{d}\mathcal U(\xi),
\end{equation}
where $d$ is the dimension of $V$. This holds because  $\mathcal U$ vanishes on cones of positive codimension and $I$ acts like $(-1)^{d}$ on the quotient $\hat \Sigma(V)$.

Let us work out the conical analogue of (\ref{intM}). That is, let us work out the relationship between the map $I$ and the interiors of manifolds in the conical (or local, or spherical) setting.

The nonempty cones $P$ that are $m$-dimensional PL manifolds are precisely those such that $P$ has the same germ at $0$ as some polytope that is itself an $m$-dimensional PL manifold. Such cones are of two distinct kinds. If $\partial P=\emptyset$ then the germ of $P$ at $0$ is the germ of a manifold at an interior point and $P\cap S(V)$ is an $(m-1)$-dimensional PL sphere. If $\partial P\neq\emptyset$ then the germ of $P$ is the germ of a manifold at a boundary point and $P\cap S(V)$ is an $(m-1)$-dimensional PL disk. In the first case 
$$
I(P)=(-1)^m(\text{int} P)=(-1)^m(P).
$$
In the second case 
$$
I(P)=(-1)^m(\text{int} P)=(-1)^m(P)+(-1)^{m-1}(\partial P).
$$

The yes/no invariant and the local Euler invariant are related by 
$$
\epsilon\circ I=e,
$$
or equivalently
$$
e\circ I=\epsilon.
$$
We verify this by evaluating both sides of the first equation on a $d$-dimensional convex cone $P$. If $P$ is a vector space then its boundary is empty and we have 
$$
\epsilon(I(P))=(-1)^d\epsilon(P)=(-1)^d=e(P).
$$
If $P$ is not a vector space then its boundary is nonempty and we have
$$
\epsilon(I(P))=(-1)^d(\epsilon(P)-\epsilon(\partial P))=(-1)^d(1-1)=0=e(P).
$$
\begin{rem}\label{singular}
We can also look at the case when the cone $P$ is not necessarily a manifold but $P-0$ is an $m$-dimensional manifold. This corresponds to the germ of a ``singular manifold'' at an isolated interior singularity. In spherical terms, it means that $M=P\cap S(V)$ is a closed manifold of dimension $m-1$. A spherical triangulation of $M$ corresponds to a conical triangulation of $P$. Each conical simplex $\sigma$ of $P$ except the $0$-simplex corresponds to a spherical simplex $\tau=\sigma\cap S(V)$ of $M$ of one dimension less. Rewriting (\ref{conetriang}) and (\ref{IVconetriang}) in terms of the spherical triangulation, we have
$$
(P)=(1-\chi(M))(0)+\sum_\tau\ (1-\chi \text{Lk}(\tau,M))(\sigma)
$$
$$
I(P)=(0)+\sum_\tau\ (-1)^{1+ \text{dim}(\tau)}(\sigma).
$$
Because $M$ is a closed $(m-1)$-dimensional manifold, $1-\chi \text{Lk}(\tau,M)=(-1)^{m-1-\text{dim}(\tau)}$ and we obtain
$$
(P)-(-1)^mI(P)=(1-\chi(M)-(-1)^m)(0)=(\chi(S^{m-1})-\chi(M))(0).
$$
\end{rem}

\subsubsection{The duality/interior relation}\label{DIDIsection}

Let $\Delta(-1):\Sigma(V)\to\Sigma(V)$ be the action of the $-1$ dilatation of $V$. (Note that the effect of the dilatation $v\mapsto \lambda v$ on $\Sigma(V)$ depends only on the sign of $\lambda$.)  

\begin{lemma}\label{DI}
The operators $I$, $D$, and $\Delta(-1)$ on $\Sigma(V)$ are related by 
$$
D\circ I=(-1)^d\Delta(-1)\circ I\circ D,
$$
where $d$ is the dimension of $V$.
\end{lemma}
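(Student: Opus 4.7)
The plan is to leverage additivity: each side of the claimed equation is a group homomorphism $\Sigma(V)\to\Sigma(V)$, so by Remark~\ref{0d} it suffices to verify the identity on conical $d$-simplices and on the conical $0$-simplex $(\{0\})$.

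The $0$-simplex case is immediate. Since $\{0\}$ is a $0$-dimensional manifold without boundary, $I(\{0\}) = (\{0\})$, and so $DI(\{0\}) = D(\{0\}) = (V)$. On the other side, $V$ is itself a closed $d$-manifold, so $I(V) = (-1)^d(V)$; moreover $\Delta(-1)$ fixes $(V)$. Therefore $(-1)^d\Delta(-1)ID(\{0\}) = (-1)^{2d}(V) = (V)$, agreeing with the left side.

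For a conical $d$-simplex $\sigma$ the plan is to pass to characteristic functions, using the conical analogue of Lemma~\ref{function}: the map $(P)\mapsto i_P$ embeds $\Sigma(V)$ in $\mathrm{Fun}(V,\mathbb{Z})$, so it is enough to check agreement as functions on $V$. Choose a basis $v_1,\dots,v_d$ with $\sigma=\mathrm{cone}(v_1,\dots,v_d)$ and denote its faces by $G_S=\mathrm{cone}(\{v_i:i\notin S\})$ for $S\subset\{1,\dots,d\}$. Inclusion--exclusion on $\partial\sigma=\bigcup_i F_i$ combined with the identity $I(\sigma)=(-1)^d((\sigma)-(\partial\sigma))$ yields $I(\sigma)=\sum_{S}(-1)^{d+|S|}(G_S)$. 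Applying $D$ and evaluating at $w\in V$ reduces, via the observation that $w\in DG_S$ iff $A(w):=\{i:\langle w,v_i\rangle>0\}\subset S$, to the collapsing alternating sum $\sum_{S\supset A(w)}(-1)^{d+|S|}$, which equals $1$ when $A(w)=\{1,\dots,d\}$ and vanishes otherwise. Hence $iDI(\sigma)$ is the indicator of the open cone $C=\{w:\langle w,v_i\rangle>0 \text{ for all } i\}$.

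The right-hand side is more direct: $D\sigma$ is a $d$-dimensional ordinary convex cone, hence a manifold with boundary, so $ID(\sigma)=(-1)^d(\mathrm{int}(D\sigma))$. Pulling back through $\Delta(-1)$ and absorbing the overall $(-1)^d$ gives the characteristic function of $\mathrm{int}(-D\sigma)$. The desired identity then follows from the elementary observation that $-D\sigma$ is the full-dimensional simplicial cone defined by $\langle u,v_i\rangle\ge 0$ for $i=1,\dots,d$, whose interior is exactly $C$. The main obstacle I foresee is managing $D$ applied to the non-convex element $(\mathrm{int}\,\sigma)$, since $D$ is not induced by any map of the underlying space $V$; the characteristic-function formalism sidesteps this by turning everything into a bookkeeping exercise in inclusion--exclusion over facets.
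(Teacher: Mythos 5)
Your proof is correct and follows essentially the same path as the paper's: reduction via Remark~\ref{0d} to the conical $0$-simplex and conical $d$-simplices, then inclusion--exclusion over the faces of $\sigma$ combined with the face-by-face identity for duals to recognize $DI(\sigma)$ as $(-1)^d$ times the open dual cone reflected by $\Delta(-1)$. The only difference is presentational: the paper carries the alternating sum out directly in $\Sigma(V)$ (writing $(\mathrm{int}\,\sigma)$ and $(V-X)$ as signed sums of faces and their duals), while you route the identical bookkeeping through the characteristic-function embedding $\Sigma(V)\hookrightarrow\mathrm{Fun}(V,\mathbb Z)$ and evaluate pointwise.
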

\begin{proof}
By Remark \ref{0d} it suffices if the two sides agree on the element $(\sigma)\in \Sigma(V)$ for every conical $d$-simplex $\sigma\subset V$ and also on $(0)$. 

We have 
$$
DI(0)=D(0)=(V)
$$
and 
$$
(-1)^d\Delta(-1)ID(0)=(-1)^d\Delta(-1)I(V)=\Delta(-1)(V)=(V).
$$

Now let $\sigma$ be a conical $d$-simplex. The set $\sigma$ consists of the nonnegative linear combinations of some basis $v_1,\dots ,v_d$ of $V$, and $\text{int}\sigma$ consists of the linear combinations with strictly positive coefficients. For $S\subset \lbrace 1,\dots ,d\rbrace$ let $\sigma_S$ be the face generated by the $v_i$ with $i\in S$. Because $\partial \sigma$ is the union of the $(n-1)$-dimensional faces, and because the faces satisfy $\sigma_{S\cap T}=\sigma_S\cap \sigma_T$, the inclusion-exclusion formula (\ref{exin}) gives
\begin{equation}\label{formulaint}
( \text{int}\sigma)=( \sigma -\partial\sigma)=\sum_{S\subset \lbrace 1,\dots ,n\rbrace}(-1)^{|S|}( \sigma_S),
\end{equation}
whence
\begin{equation}\label{Dformulaint}
D( \text{int}\sigma)=\sum_{S}\ (-1)^{|S|}( D\sigma_S).
\end{equation}
The set $D\sigma_S$ consists of all those vectors $v\in V$ such that $\langle v,v_i\rangle\le 0$ for all $i\in S$. Note that $D\sigma_{S\cup T}=D\sigma_S\cap D\sigma_T$.
Let $X$ be the union of $D\sigma_S$ over all nonempty $S$. Inclusion-exclusion gives
$$
(V-X)=\sum_S\ (-1)^{d-|S|}(D\sigma_S),
$$
and hence by (\ref{Dformulaint})
$$
D( \text{int}\sigma)=(-1)^d( V-X).
$$
But $V-X$, the set of all $v$ such that $\langle v,v_i\rangle>0$ for all $i$, is related to $\text{int}D\sigma$ by $\Delta(-1)$.
\end{proof}
Using this result and (\ref{VIcone}) we find that
$$
\mathcal W (I\xi)=\mathcal U(DI\xi)=(-1)^d\mathcal U(ID\xi)=\mathcal U(D\xi)=\mathcal W(\xi)
$$
for every $\xi\in \Sigma(V)$, or
\begin{equation}
\mathcal W\circ I=\mathcal W.
\end{equation}
This in turn implies:
\begin{cor}\label{oddbend}
When the cone $P$ is an odd-dimensional manifold without boundary then $\mathcal W(P)=0$.
\end{cor}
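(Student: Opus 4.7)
The plan is to apply the identity $\mathcal W \circ I = \mathcal W$ that was just established in the displayed equation immediately preceding the corollary, in combination with the concrete description of $I(P)$ for cones that are manifolds without boundary, which is recorded in \S\ref{interior cone}.

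First I would recall from \S\ref{interior cone} that when the cone $P$ is an $m$-dimensional PL manifold with $\partial P = \emptyset$, the formula
$$
I(P) = (-1)^m (\text{int } P) = (-1)^m (P)
$$
holds in $\Sigma(V)$, simply because $\text{int } P = P$ in this case. Applying $\mathcal W$ to both sides and using $\mathcal W \circ I = \mathcal W$, I get
$$
\mathcal W(P) = \mathcal W(I(P)) = (-1)^m \mathcal W(P).
$$
When $m$ is odd this forces $2 \mathcal W(P) = 0$, and since $\mathcal W$ takes values in $\mathbb R$, we conclude $\mathcal W(P) = 0$.

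There is no real obstacle here; the corollary is essentially an immediate packaging of the identity $\mathcal W \circ I = \mathcal W$ with the manifold-without-boundary case of the relation between $I$ and interiors. The only point where one has to be slightly attentive is that the ``$m$-dimensional manifold without boundary'' referred to in the statement really does fall under the $\partial P = \emptyset$ case of \S\ref{interior cone}, so that the simple identity $I(P) = (-1)^m(P)$ is available (rather than the more complicated formula involving $(\partial P)$ that would apply in the manifold-with-boundary case).
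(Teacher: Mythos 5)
Your proposal is correct and follows essentially the same route as the paper: the paper's own proof is the one-line chain $\mathcal W(P)=\mathcal W(I(P))=\mathcal W(-(P))=-\mathcal W(P)$, which is exactly your combination of $\mathcal W\circ I=\mathcal W$ with the identity $I(P)=(-1)^m(P)$ for a manifold cone without boundary. Your write-up just spells out the appeal to \S\ref{interior cone} a bit more explicitly.
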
\label{}
\begin{proof}
$\mathcal W(P)=\mathcal W(I(P))=\mathcal W(-(P))=-\mathcal W(P)$.
\end{proof}
\begin{rem}
This last argument can be adapted to the case of an isolated singularity; in the setting of Remark \ref{singular}, with $m$ odd, we find
$$
\mathcal W(P)=1-\frac{\chi(M)}{2}.
$$
\end{rem}

%%%

\section{The $\star$-product}\label{star}

We continue to study valuations of polytopes or cones in a vector space $V$, but now the valuations take values in a ring and are required to satisfy a multiplicativity condition. In order to state the condition, we place all of our vector spaces in $\mathbb R^\infty$, a real vector space of countably infinite dimension equipped with a positive definite inner product. We will show that the set of all multiplicative functions of cones (with values in a given ring $k$) carries an extra structure that can be neatly expressed by saying that these functions form (the morphism set of) a groupoid $\mathcal G_k$. The objects of the groupoid are multiplicative functions of vector spaces. The same kind of rule that gives the composition law in the groupoid also gives an ``action'' of the groupoid on the set of all translation-invariant multiplicative functions of polytopes. (Later on we will specialize to the case of Euclidean valuations: valuations that are invariant under isometry. These form a smaller groupoid that can be described more simply, without using $\mathbb R^\infty$.)

\subsection{Multiplicative functions of vector spaces, cones, and polytopes}

Here $V$ or $W$ always denotes a finite-dimensional vector subspace of $\mathbb R^\infty$, and $V\times W$ always means the subspace spanned by orthogonal subspaces $V$ and $W$.

For a commutative ring $k$ we will define the set $\mathcal O_k$ of \emph{objects}, which are multiplicative functions of vector spaces; the set $\mathcal G_k$ of \emph{morphisms}, which are multiplicative families of valuations of cones in vector spaces; and the set $\mathcal I_k$ of \emph{multiplicative polytope invariants}, which are multiplicative families of translation-invariant valuations of polytopes in vector spaces. 

\subsubsection{Objects}
\begin{defin}
A $k$-valued \emph{object} is a function $O$ assigning an element $O_V\in k$ to each finite-dimensional vector subspace $V$ of $\mathbb R^\infty$ in such a way that \begin{itemize}
\item $O_{V\times W}=O_VO_W$ when $V\perp W$
\item $O_0=1$.
\end{itemize} 
\end{defin}
The set $\mathcal O_k$ of all such functions is an abelian monoid: it has a commutative, associative, and unital multiplication given by
$$
(OP)_V=O_VP_V.
$$

The most prominent objects are those which depend only on the dimension of the vector space:
\begin{example}\label{scalar object}
Each element $a\in k$ determines a $k$-valued object
$$
V\mapsto a_V:=a^{\text{dim}(V)},
$$
which will simply be denoted by $a$. We call these the \emph{scalar objects}.
\end{example}
Note that $(ab)_V=a_Vb_V$ and $1_V=1$, so that the multiplicative monoid of $k$ is embedded as a submonoid of $\mathcal O_k$. The object called $0$ satisfies $0_V=0$ for every vector subspace of positive dimension, but $0_0=1$. We have $0O=0$ for every object $O$.

\begin{example}\label{space object}
Each vector subspace $U\subset \mathbb R^\infty$ (not necessarily finite-dimensional) determines a $\mathbb Z$-valued object $O^U$, defined by $O^U_V=1$ if $V\subset U$ and otherwise $O^U_V=0$. These satisfy $O^UO^{U'}=O^{U\cap U'}$, $O^{\mathbb R^\infty}=1$, and $O^0=0$.
\end{example}
\begin{rem}
A $\mathbb Z$-valued object determines a $k$-valued object for every ring $k$. More generally a ring map $k\to k'$ turns a $k$-valued object into a $k'$-valued object. We will sometimes use the same notation for the new object, especially if $k$ is a subring of $k'$. The same applies to morphisms and multiplicative polytope invariants.
\end{rem}

All of the objects mentioned up to now satisfy the following stronger form of the multiplicativity condition: $O_{V+W}=O_VO_W$ if $V\cap W=0$ (even if $V$ and $W$ are not orthogonal). The next result says that these are essentially the only examples of that kind. 
\begin{prop}
If $k$ is an integral domain and the object $O\in \mathcal O_k$ satisfies this stronger multiplicativity, then either $O=0$ or else $O=aO^U$ for some $a\in k$ different from $0$ and some $U\subset \mathbb R^\infty$ different from $0$.
\end{prop}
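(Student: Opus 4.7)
The plan is to define $U$ geometrically as the ``support'' of $O$, show it is a vector subspace of $\mathbb R^\infty$, and then show $O$ is determined on $U$ by a single scalar $a\in k$.

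First I would set $U=\{v\in\mathbb R^\infty\mid O_{\langle v\rangle}\neq 0\}$, where $\langle v\rangle$ denotes the span (so $0\in U$ automatically). The basic tool is the following consequence of the stronger multiplicativity: whenever $V'\subset V$ is a subspace, one can pick a (not necessarily orthogonal) complement $V''$ with $V'\oplus V''=V$, and then $O_V=O_{V'}O_{V''}$. Because $k$ is a domain, this shows $O_V\neq 0\Rightarrow O_{V'}\neq 0$. It follows immediately that $U$ is closed under nonzero scalars. Assuming $O\neq 0$, there is some $V\neq 0$ with $O_V\neq 0$, and by this monotonicity property every line in $V$ lies in $U$, so $U\neq 0$.

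To see $U$ is closed under addition, take nonzero $v,w\in U$. If they are linearly dependent there is nothing to prove. Otherwise $\langle v\rangle\cap\langle w\rangle=0$, so $O_{\langle v,w\rangle}=O_{\langle v\rangle}O_{\langle w\rangle}\neq 0$. Now $\langle v,w\rangle$ also decomposes as $\langle v+w\rangle\oplus\langle v\rangle$, so the stronger multiplicativity and the domain hypothesis give $O_{\langle v+w\rangle}\neq 0$, i.e.\ $v+w\in U$. Thus $U$ is a nonzero subspace.

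The main step, and I expect this to be the subtle one, is showing that $a:=O_{\langle v_0\rangle}$ does not depend on the choice of $v_0\in U\setminus 0$. Fix $v_0$ and another nonzero $v\in U$; the nontrivial case is when $v_0,v$ are independent. The two-dimensional space $V=\langle v_0,v\rangle\subset U$ admits the two direct-sum decompositions
\[
V=\langle v_0\rangle\oplus\langle v_0+v\rangle=\langle v\rangle\oplus\langle v_0+v\rangle,
\]
so strong multiplicativity gives $O_{\langle v_0\rangle}O_{\langle v_0+v\rangle}=O_V=O_{\langle v\rangle}O_{\langle v_0+v\rangle}$. Since $v_0+v\in U$, the common factor is nonzero, and cancelling in the domain $k$ yields $O_{\langle v\rangle}=O_{\langle v_0\rangle}=a$. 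Note $a\neq 0$ because $v_0\in U$.

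Finally I would check the two formulas. For $V\subset U$ of dimension $n$, pick any basis $v_1,\dots,v_n$; iterating the stronger multiplicativity (valid because each partial sum intersects the next summand trivially) gives $O_V=\prod_i O_{\langle v_i\rangle}=a^n$. For $V\not\subset U$, choose $v\in V\setminus U$ and a complement $V'$ with $\langle v\rangle\oplus V'=V$; then $O_V=O_{\langle v\rangle}O_{V'}=0\cdot O_{V'}=0$. Comparing with the definition of $O^U$ (and using $O_0=1=a^0$), these two cases together read $O_V=a^{\dim V}\cdot O^U_V=(aO^U)_V$ for every $V$, completing the proof.
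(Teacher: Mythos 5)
Your proof is correct and follows essentially the same route as the paper's: both hinge on cancelling the common factor $O_{\langle v_0+v\rangle}$ in the integral domain $k$, obtained from two different direct-sum decompositions of the two-dimensional space $\langle v_0,v\rangle$. The only difference is cosmetic---you define $U$ directly as the set of vectors $v$ with $O_{\langle v\rangle}\neq 0$ and verify it is a subspace, whereas the paper takes $U$ to be the span of all such lines and leaves to the reader the (easy, inductive) check that $O$ is then nonzero on every line contained in $U$; your version makes that step explicit.
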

\begin{proof}
Suppose that $O\neq 0$. There must be at least one line (one-dimensional subspace) $L$ in $\mathbb R^\infty$ such that $O_L\neq 0$. Note that if $L$ and $M$ are two such lines then for any third line $N$ that is contained in $L+M$ we have 
$$
O_LO_N=O_{L+N}=O_{L+M}=O_LO_M,
$$
and therefore $O_N=O_M$. By the same token $O_N=O_L$. This implies that $O_L=O_M$, and it also implies that $O_N\neq 0$. 

Now let $a\in k$ be $O_L$ for some (therefore any) line $L$ such that $O_L\neq 0$, and let $U$ be the vector space spanned by all such lines. We have $O_M=a$ for every line $M$ contained in $U$ and $O_M=0$ for every line $M$ not contained in $U$. That is, $O$ agrees with $aO^U$ on every line. It follows that $O=aO^U$.
\end{proof}
\begin{example}
The following class of objects will not be used below, but it seems worthy of notice. If $T$ is a self-adjoint linear operator on $\mathbb R^\infty$, let $tr_VT$ be the trace of the composition
$$
V\to \mathbb R^\infty\xrightarrow{T} \mathbb R^\infty\to V
$$
of $T$ with inclusion and orthogonal projection. The equation $O_T(V)=e^{tr_VT}$ defines an $\mathbb R$-valued object $O_T$.
We have $O_TO_{T'}=O_{T+T'}$. When $T=c\mathbb I$ is a scalar multiple of the identity then $O_T$ is the scalar object $e^c$.
\end{example}

\subsubsection{Morphisms}

\begin{defin}\label{morphism}
A $k$-valued \emph{morphism} is a collection $F=\lbrace F_V\rbrace$ indexed by the finite-dimensional subspaces of $\mathbb R^\infty$, such that
\begin{itemize}
\item $F_V$ is a valuation of cones in $V$ with values in (the additive group of) $k$
\item $F_{V\times W}(P\times Q)=F_V(P)F_W(Q)$ whenever $V\perp W$ 
\item $F_0(0)=1$, where $0$ is the trivial subspace. 
\end{itemize}
\end{defin}
We express the last two of these three conditions by saying that the collection $\lbrace F_V\rbrace$ of valuations is \emph{multiplicative}.

Let $\mathcal G_k$ be the set of all $k$-valued morphisms.

The four key examples are the yes/no invariant $\epsilon$ and the local Euler invariant $e$ (both $\mathbb Z$-valued) and the conical volume $\mathcal U$ and dual volume $\mathcal W$ (both $\mathbb R$-valued). 

There is a trivial $\mathbb Z$-valued example called $0$, given by $0_V(P)=0$ for every cone in every vector space with the exception of the nonempty cone in the trivial vector space: $0_0(0)=1$.

Of course for any such $F$ and $V$ the valuation $F_V$ may be extended from cones to more general conical constructible sets. We often do so without comment.

\subsubsection{Multiplicative polytope invariants}

\begin{defin}
A $k$-valued \emph{multiplicative polytope invariant} is a collection $F=\lbrace F_V\rbrace$ indexed by the finite-dimensional subspaces of $\mathbb R^\infty$, such that 
\begin{itemize}
\item $F_V$ is a valuation of polytopes in $V$ with values in $k$
\item $F_V$ is invariant under translations in $V$
\item $F_{V\times W}(P\times Q)=F_V(P)F_W(Q)$ whenever $V\perp W$ 
\item $F_0(0)=1$, where $0$ is the trivial subspace. 
\end{itemize}
\end{defin}
Again we express the last two conditions by saying that the collection $\lbrace F_V\rbrace$ of valuations is \emph{multiplicative}.

Let $\mathcal I_k$ be the set of all $k$-valued multiplicative polytope invariants.

The key examples are the Euler invariant $\chi$ ($\mathbb Z$-valued) and the volume $\mathcal V$ ($\mathbb R$-valued).

Again there is a trivial example $0$, given by  $0_V(P)=0$ for every polytope in every vector space, with the sole exception $0_0(0)=1$ for the nonempty polytope in the trivial vector space.

Again we routinely extend $F_V$ from polytopes to constructible sets.

\subsubsection{Scaling}
The monoid $\mathcal O_k$ acts on the sets $\mathcal G_k$ and $\mathcal I_k$ by 
$$
(O\bullet F)_V(P)=O_VF_V(P).
$$
We refer to $O\bullet F$ as the \emph{scaling} of $F$ by $O$. Thus in the case when $O$ is given by an element $a\in k$ we have
$$
(a\bullet F)_V(P)=a^{\text{dim}(V)}F_V(P).
$$
For example, if $F$ is volume measured in feet, square feet, cubic feet, etc., then $12\bullet F$ is volume measured in inches, square inches, cubic inches, etc.

We economize on punctuation by writing $-\bullet F$ for $(-1)\bullet F$ and $-O\bullet F$ for $(-O)\bullet F=-(\mathcal O\bullet F)$.

\subsection{The groupoid}

We now make a groupoid whose  morphism set is $\mathcal G_k$ and whose object set is $\mathcal O_k$. The groupoid itself will also be called $\mathcal G_k$. We first specify sources, targets, identity morphisms, composition, and inverses, and then verify that these definitions make a groupoid.

The \emph{source} and \emph{target} of $F\in \mathcal G_k$ are the objects $s(F)$ and $t(F)$ given by 
$$
s(F)_V=F_V(V)
$$$$
t(F)_V=F_V(0).
$$

\begin{rem}
The morphisms $F$ such that $t(F)=1$ are precisely those which are \emph{absolute} in the sense that $F_V(P)$ does not depend on $V$. Indeed, when $P$ is a cone in $V$ then $F_{V\times W} (P\times 0)=F_V(P)F_W(0)$, and to say that this always equals $F_V(P)$ is to say that $F_W(0)=1$ for all $W$. Among the absolute morphisms are $\epsilon$, $e$, and $\mathcal W$.
At the other extreme, to say that $t(F)=0$ is to say that $F_V(P)$ vanishes whenever $P$ has positive codimension in $V$, in other words, that $F_V:\Sigma(V)\to k$ factors through the projection $\Sigma(V)\to \hat\Sigma(V)$. (In the terminology of \cite{McM1} $F_V$ is \emph{simple}.) An example is $\mathcal U$. 
\end{rem}

The \emph{identity morphism} of the object $O$ is the morphism $1_O$ given by $(1_O)_V(P)=O_V$ for every nonempty cone $P\subset V$. In other words, it is $O\bullet\epsilon$.

The \emph{composition} $F\star G$ of two morphisms is defined as follows. Assume that $s(F)=t(G)$. For any finite-dimensional $V\subset \mathbb R^\infty$ and any cone $P\subset V$ we must define $(F\star G)_V(P)\in k$. Choose a conical cell structure for $P$. Each cell $\sigma$ spans a vector space $\text{span}(\sigma)$, and this has an orthogonal complement $\text{span}(\sigma)^\perp$ in $V$. In the equation below we use the abbreviations $G_\sigma$ for $G_{\text{span}(\sigma)}$ and $F_{\sigma^\perp}$ for $F_{\text{span}(\sigma)^\perp}$. Define
\begin{equation}\label{comp}
(F\star G)_V(P)=\sum_{\sigma}\ F_{\sigma^\perp}(\nu(\sigma,P))\cdot G_{\sigma}(\text{int}\sigma).
\end{equation}
(The symbol ``$\cdot$'' on the right-hand side simply means multiplication in $k$. It is inserted to make some formulas easier to read.)
\begin{rem}
The assumption that $s(F)=t(G)$ will be needed in proving that the right hand side of (\ref{comp}) is well defined (independent of the choice of conical cell structure of $P$). 
\end{rem}
The \emph{inverse} of a morphism $F$ is a morphism $FD$ defined by duality: for a convex cone $P$ in $V$,
$$
(FD)_V(P)=F_V(D_VP)).
$$

\begin{thm}\label{groupoidthm}
With the definitions above, $\mathcal G_k$ and $\mathcal O_k$ form a groupoid.
\end{thm}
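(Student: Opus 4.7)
The theorem has six parts: (i) well-definedness of (\ref{comp}) as a function of $P$ (independence of the chosen conical cell structure), (ii) $F\star G$ is itself a $k$-valued morphism, (iii) its source and target are $s(G)$ and $t(F)$, (iv) the $1_O$ act as two-sided identities, (v) associativity, and (vi) $FD$ is a two-sided inverse for $F$. I would tackle them in that order, and the main obstacle is (i). The pivotal tool throughout is the normal cone factorization from \S\ref{normal cones}: for cells $\tau\subset\sigma$ in any conical cell structure of $P$,
$$
\nu(\tau,P)=\nu(\tau,\sigma)\times\nu(\sigma,P)
$$
under the orthogonal splitting of $\text{span}(\tau)^\perp$ as (orthogonal complement of $\text{span}(\tau)$ in $\text{span}(\sigma)$) times $\text{span}(\sigma)^\perp$. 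Combined with multiplicativity of $F$, this factorization is the mechanism through which the compatibility $s(F)=t(G)$ must intervene.

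For (i), any two conical cell structures have a common refinement, so it suffices to show invariance under a single refinement, and for this I would localize: subdivide a single cell $\sigma$, leaving all other cells untouched. The old contribution of $\sigma$ to (\ref{comp}), namely $F_{\sigma^\perp}(\nu(\sigma,P))\cdot G_\sigma(\text{int}\sigma)$, must equal the sum of contributions from cells $\tau\subset\sigma$ of the refined structure. Using the factorization above, $F_{\sigma^\perp}(\nu(\sigma,P))$ pulls out of each summand, and what remains is $\sum_{\tau\subset\sigma} F_{\tau^\perp\cap\text{span}(\sigma)}(\nu(\tau,\sigma))\cdot G_\tau(\text{int}\tau)$. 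For $\tau$ whose interior lies in the relative interior of $\sigma$, $\nu(\tau,\sigma)$ fills the entire orthogonal complement of $\text{span}(\tau)$ in $\text{span}(\sigma)$, producing a factor $F_W(W)=s(F)_W$; by $s(F)=t(G)$ this equals $G_W(0)$, and multiplicativity of $G$ collapses the term to $G_{\text{span}(\sigma)}(\text{int}\tau)$. The remaining cells on $\partial\sigma$ are handled by induction on dimension and cancellation with the similarly refined neighboring cells. The disjoint decomposition $\text{int}\sigma=\bigsqcup_{\tau\subset\text{int}\sigma}\text{int}\tau$ then rebuilds $G_{\text{span}(\sigma)}(\text{int}\sigma)$.

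Parts (ii)--(v) follow with much less work. Additivity of $F\star G$ in $P$ is checked via Lemma \ref{cone cutlemma} on cell structures compatible with a hyperplane cut: for a cell $\sigma\subset H$, the normal cone splits as $\nu(\sigma,P)=(\nu(\sigma,P)\cap H^+)\cup(\nu(\sigma,P)\cap H^-)$, and additivity of $F_{\sigma^\perp}$ delivers the result. External multiplicativity follows from a product cell structure with cells $\sigma\times\tau$ and the factorizations $\nu(\sigma\times\tau,P\times Q)=\nu(\sigma,P)\times\nu(\tau,Q)$ and $\text{int}(\sigma\times\tau)=\text{int}\sigma\times\text{int}\tau$. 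Source and target use the minimal (single-cell) structures of $V$ and $\{0\}$: only the unique cell contributes, giving $(F\star G)_V(V)=F_0(0)\,G_V(V)=s(G)_V$ and $(F\star G)_V(\{0\})=F_V(\{0\})\,G_0(\{0\})=t(F)_V$. For the identities, in $F\star 1_{s(F)}$ the factor $\epsilon_\sigma(\text{int}\sigma)$ vanishes unless $\sigma=\{0\}$, isolating the single term $F_V(P)$; in $1_{t(G)}\star G$ one uses $\epsilon_{\sigma^\perp}(\nu(\sigma,P))=1$ and $t(G)_{\sigma^\perp}=G_{\sigma^\perp}(0)$ together with multiplicativity of $G$ to rewrite the sum as $\sum_\sigma G_V(\text{int}\sigma)=G_V(P)$. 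Associativity is the direct double-sum identity obtained by applying the normal cone factorization in two nested layers on a common cell structure.

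The inverse property (vi) is the most delicate step. By additivity of the $\star$-product (already established), it suffices to verify $F\star FD=1_{t(F)}$ and $FD\star F=1_{s(F)}$ on generators of $\Sigma(V)$, namely the cone $\{0\}$ (on which both are immediate) and conical $d$-simplices $\sigma$ (Remark \ref{0d}). For a conical $d$-simplex $\sigma$ with faces $\sigma_S$ indexed by $S\subset\{1,\dots,d\}$, the dual faces $D\sigma_S$ and the inclusion-exclusion identities (\ref{formulaint})--(\ref{Dformulaint}) from the proof of Lemma \ref{DI} provide precisely the combinatorics needed. Expanding $(F\star FD)_V(\sigma)$ via multiplicativity of $F$ and the normal cone factorization, and then using those identities to convert the sum over faces of $\sigma$ into a sum over dual faces, one obtains a double inclusion-exclusion that collapses to the single value $t(F)_V$; the argument for $FD\star F=1_{s(F)}$ is symmetric, with $\sigma$ and $D\sigma$ interchanged.
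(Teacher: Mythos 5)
Your decomposition of the theorem and your identification of the central mechanism---the orthogonal factorization of normal cones combined with multiplicativity of $F$ and $G$ and the compatibility $s(F)=t(G)$---match the paper's. Two of your steps, however, diverge in ways worth flagging.

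For well-definedness (i), your localization strategy (``subdivide a single cell $\sigma$, leaving all other cells untouched'') is not a legal move: any genuine subdivision of $\sigma$ typically subdivides its faces, and those faces are cells shared with neighbors, so they cannot be left untouched. This is why you then find yourself invoking ``induction on dimension and cancellation with the similarly refined neighboring cells'' for cells lying on $\partial\sigma$---you are compensating for an overcount that never arises in the paper's argument. The paper takes the full refinement and assigns each fine cell $\tau$ to the \emph{unique} coarse cell $\sigma$ with $\text{int}\,\tau\subset\text{int}\,\sigma$ (cells of $\partial\sigma$ are attributed to faces of $\sigma$, not to $\sigma$ itself). With that grouping, the factorization $\nu(\tau,P)=\nu(\sigma,P)\times W(\tau,\sigma)$ and multiplicativity show directly that the $\tau$-terms attached to a given $\sigma$ sum to the $\sigma$-term, with no leftover boundary bookkeeping.

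For the inverse property (vi), you propose restricting to generators of $\Sigma(V)$ and running a double inclusion-exclusion on the faces of a conical $d$-simplex via the combinatorics of Lemma \ref{DI}. You have not carried this out, and it is much heavier than what is needed. The paper's argument applies to every convex cone $P$ at once and is purely geometric: with the face cell structure and multiplicativity,
$$
(FD\star F)_V(P)=\sum_\sigma F_V\bigl(D_V\nu(\sigma,P)\times\text{int}\,\sigma\bigr),
$$
and the constructible sets $D_V\nu(\sigma,P)\times\text{int}\,\sigma$ are precisely the sets of $v\in V$ whose nearest point in $P$ lies in $\text{int}\,\sigma$; they partition $V$, so the sum is $F_V(V)=s(F)_V$. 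The paper also checks only the one-sided identity $FD\star F=1_{s(F)}$, observing that in a category where every morphism has a left inverse, left inverses are automatically two-sided---a shortcut you miss. Finally, your use of Lemma \ref{cone cutlemma} for the valuation property (ii) is a detour; the paper verifies the valuation relations directly on a cell structure of $P\cup Q$ making $P$ and $Q$ subcomplexes. On the plus side, your single-cell structures on $V$ and $\{0\}$ for the source/target computation are slightly slicker than the paper's check on lines.
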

\begin{proof}
We verify the following statements in the following order: 
\begin{itemize}
\item $(F\star G)_V(P)$ is well-defined. 
\item For each $V$ the function $(F\star G)_V$ is a valuation.
\item The family $F\star G=\lbrace (F\star G)_V\rbrace$ is multiplicative.
\item$s(F\star G)=s(G)$.
\item$t(F\star G)=t(F)$.
\item The $\star$-product is associative.
\item$1_O$ is a two-sided identity.
\item $FD$ is a morphism, and it is an inverse to $F$.
\end{itemize}

Suppose that we have two conical cell structures of $P$, one a refinement of the other. Denote the cells of the coarser structure by $\sigma$ and those of the finer one by $\tau$. For each $\tau$ there is a unique $\sigma$ such that $\text{int}\tau\subset \text{int}\sigma$; say that $\tau$ \emph{belongs to} $\sigma$. Let us rewrite each term in the sum 
$$
\sum_{\tau}\ F_{\tau^\perp}(\nu(\tau,P))\cdot G_{\tau}(\text{int}\tau),
$$
using the simplex $\sigma$ that $\tau$ belongs to. The cone $\nu(\tau,P)$ is the orthogonal product $\nu(\sigma,P)\times W(\tau,\sigma)$, where the vector space $W(\tau,\sigma)$ is the orthogonal complement of $\text{span}(\tau)$ in $\text{span}(\sigma)$. Therefore we have 
$$
F_{\tau^\perp}(\nu(\tau,P))\cdot G_{\tau}(\text{int}\tau)=F_{\sigma^\perp}(\nu(\sigma,P))\cdot F_{W(\tau,\sigma)}(W(\tau,\sigma))\cdot G_{\tau}(\text{int}\tau).
$$
Because $s(F)=t(G)$, this becomes
$$
F_{\sigma^\perp}(\nu(\sigma,P))\cdot G_{W(\tau,\sigma)}(0)\cdot G_{\tau}(\text{int}\tau)=F_{\sigma^\perp}(\nu(\sigma,P))\cdot G_{\sigma}(\text{int}\tau).
$$
The sum of these terms over all those $\tau$ which belong to a given $\sigma$ is 
$$
F_{\sigma^\perp}(\nu(\sigma,P))\cdot G_{\sigma}(\text{int}\sigma),
$$
because $\text{int}\sigma$ is the union of the disjoint sets $\text{int}\tau$. Therefore the sum over all $\tau$ is equal to the sum of $F_{\sigma^\perp}(\nu(\sigma,P))\cdot G_{\sigma}(\text{int}\sigma)$ over all $\sigma$, as asserted.

We now verify that $(F\star G)_V$ is a valuation of cones in $V$. It is trivially true that
$$
(F\star G)_V(\emptyset)=0,
$$
because the empty cone has no conical cells. To prove that 
$$
(F\star G)_V(P\cup Q)=(F\star G)_V(P)+(F\star G)_V(Q)-(F\star G)_V(P\cap Q),
$$
choose a conical cell structure of $P\cup Q$ that makes $P$ and $Q$ subcomplexes. When a cell $\sigma$ is contained in both $P$ and $Q$ then the cones $\nu(\sigma, P)$ and $\nu(\sigma,Q)$ have union $\nu(\sigma,P\cup Q)$ and intersection $\nu(\sigma, P\cap Q)$, so that 
$$
F_{\sigma^\perp}(\nu(\sigma,P\cup Q))=F_{\sigma^\perp}(\nu(\sigma,P))+F_{\sigma^\perp}(\nu(\sigma,Q))-F_{\sigma^\perp}(\nu(\sigma,P\cap Q)).
$$
When $\sigma$ is contained in $P$ but not in $Q$ then $\nu(\sigma,P\cup Q)=\nu(\sigma,P)$ and
$$
F_{\sigma^\perp}(\nu(\sigma,P\cup Q))=F_{\sigma^\perp}(\nu(\sigma,P))+0-0.
$$
When $\sigma$ is contained in $Q$ but not in $P$ then $\nu(\sigma,P\cup Q)=\nu(\sigma,Q)$ and
$$
F_{\sigma^\perp}(\nu(\sigma,P\cup Q))=0+F_{\sigma^\perp}(\nu(\sigma,Q))-0.
$$
Multiply all terms in the last three equations by $G_\sigma(\text{int}\sigma)$ and then sum over all $\sigma$ to obtain the conclusion.

We verify that $F\star G$ is multiplicative. It is trivially true that $(F\star G)_0(0)=1$. To see that 
\begin{equation}\label{starmult}
(F\star G)_{V\times W}(P\times Q)=(F\star G)_V(P)\cdot (F\star G)_W(Q)
\end{equation}
when $V\perp W$, choose conical cell structures for $P$ and $Q$. As $\sigma$ ranges over the cells of $P$ and $\tau$ ranges over those of $Q$, the products $\sigma\times \tau$ are the cells of a conical cell structure for $P\times Q$. Moreover, 
$$
\nu( \sigma\times \tau,P\times Q)=\nu(\sigma,P)\times \nu( \tau,Q)
$$
and 
$$
\text{int}(\sigma\times\tau)=\text{int}\sigma\times \text{int}\tau.
$$
The left-hand side of (\ref{starmult}) is equal to the sum of
$$
F_{(\sigma\times\tau)^\perp}(\nu(\sigma\times \tau,{P\times Q}))\cdot G_{\sigma\times\tau}(\text{int}(\sigma\times\tau))
$$
over all ordered pairs $(\sigma,\tau)$. This is the same as the sum of 
$$
F_{\sigma^\perp}(\nu(\sigma,P))\cdot G_{\sigma}(\text{int}\sigma)\cdot F_{\tau^\perp}(\nu(\tau,Q))\cdot G_{\tau}(\text{int}\tau)
$$
and is therefore equal to the right hand side of (\ref{starmult}).

To verify that
$$s(F\star G)=s(G),
$$
evaluate both sides on a line $L$. (Two objects must be equal if they agree on lines.) We must show that $(F\star G)_L(L)=G_L(L)$. The only conical triangulation of $L$ has one $0$-simplex (the origin) and two $1$-simplices (half-lines). The $0$-simplex contributes the term 
$$
F_L(\nu(0,L) )\cdot G_0(0)=F_L(L)\cdot 1=G_L(0).
$$
A $1$-simplex $\sigma$ contributes the term 
$$
F_0(\nu(\sigma,L))\cdot G_L(\text{int}\sigma)=F_0(0)\cdot G_L(\sigma -0)=G_L(\sigma -0).
$$
The sum of the three terms is $G_L(L)$, since the line $L$ is the disjoint union of $0$ and the two open half-lines $\sigma-0$.

To verify that
$$
t(F\star G)=t(F),
$$
again evaluate both sides on $L$. We must show that $(F\star G)_L(0)=F_L(0)$. The only conical cell structure of $0$ has just one ($0$-dimensional) simplex. In this case the one term is 
$$
F_L(\nu(0,0))\cdot G_0(\text{int}0)=F_L(0)\cdot G_0(0)=F_L(0).
$$

For associativity, first recall from \S\ref{normal cones} that when $\tau$ is a cell in a conical cell structure of $P$ then $\nu(\tau,P)$ has a conical cell structure whose simplices are the normal cones $\nu(\tau,\sigma)$ of $\tau$ in all the cells $\sigma$ that contain it. 
Now suppose that $F$, $G$, and $H$ are multiplicative $k$-valued invariants with $s(F)=t(G)$ and $s(G)=t(H)$. Both $((F\star G)\star H)_V(P)$ and $(F\star (G\star H))_V(P)$ are equal to the sum, over all pairs $\tau\subset \sigma$ of cells in a conical cell structure of $P$, of 
$$
F_{\sigma^\perp}(\nu(\sigma,P))\cdot G_{W(\tau,\sigma)}(\text{int}\nu(\tau,\sigma)\cdot H_{\tau}(\text{int}\tau),
$$
where again $W(\tau,\sigma)$ is the orthogonal complement of $\text{span}(\tau)$ in $\text{span}(\sigma)$. 

To verify that $1_{O}\star G=G$ for every morphism $G$ such that $t(G)=O$, evaluate $(1_{O}\star G)_V$ on a conical polytope $P\subset V$. For every cell $\sigma$ in a conical cell structure, the term $(1_{ O})_{\sigma^\perp}(\nu(\sigma,P))\cdot G_{\sigma}(\text{int}\sigma)$ is 
$$
 O_{\sigma^\perp}\cdot G_{\sigma}(\text{int}\sigma)=G_{\sigma^\perp}(0)\cdot G_{\sigma}(\text{int}\sigma)=G_V(\text{int}\sigma).
$$
The sum of this over all $\sigma$ is $G_V(P)$ because $P$ is the union of the disjoint sets $\text{int}\sigma$.

To verify that $F\star 1_{ O}=F$ for every morphism $F$ with $s(F)=O$, evaluate $F\star 1_{ O}$ on $P$. For each cell $\sigma$ except $0$, the term 
$$
F_{\sigma^\perp}(\nu(\sigma,P))\cdot (1_{ O})_{\sigma}(\text{int}\sigma)
$$
is zero, because 
$$
(1_{ O})_{\sigma}(\text{int}\sigma)=O_\sigma\cdot \epsilon(\text{int}\sigma)
$$
and 
$$
\epsilon(\text{int}\sigma)=\epsilon(\sigma)-\epsilon(\partial\sigma)=1-1=0.
$$ 
The remaining term is 
$$
F_V(\nu(0,P))\cdot  (1_O)_0(\text{int}0)=F_V(P)\cdot 1=F_V(P)
$$
because $\epsilon(\text{int}0)=\epsilon(0)=1$.

To show that $FD$ is an inverse for $F$, first note that $FD$ is a morphism: by Proposition \ref{Dadd} each valuation $(FD)_V$ is well-defined, and by (\ref{Dprod})  the family $FD$ is multiplicative.
Source and target are reversed:
$$
s(FD)_V=(FD)_V(V)=F_V(D_VV)=F_V(0)=t(F)_V
$$
$$
t(FD)_V=(FD)_V(0)=F_V(D_V0)=F_V(V)=s(F)_V
$$
As soon as we have shown that $FD\star F=1_{s(F)}$, it will follow that $F\star FD=1_{t(F)}$ as well, because when every morphism in a category has a left inverse then left inverses are two-sided inverses. 

We must show that 
$$
(FD\star F)_V(P)=s(F)_V
$$
for every convex cone $P\subset V$. Using the canonical conical cell structure on $P$ in which the faces are the cells, we compute 
$$
(FD\star F)_V(P)=\sum_\sigma\ F_{\sigma^\perp}(D_V\nu(\sigma,P))\cdot F_{\sigma}(\text{int}\sigma)=\sum_\sigma\ F_V(D_V\nu(\sigma,P)\times \text{int}\sigma).
$$
The set 
$$
D_V\nu(\sigma,P)\times\text{int}\sigma\subset V
$$
consists of those vectors $v\in V$ such that whatever point of $P$ is nearest to $v$ belongs to the interior of the face $\sigma$. Since $V$ is the union of these disjoint conical constructible sets, the sum is $F_V(V)=s(F)_V.$
\end{proof}

It is clear that scaling is compatible with source, target, identity, and composition. Explicitly,
$$
s(O\bullet F)= Os(F)
$$
$$
t(O\bullet F)= Ot(F)
$$
$$
1_{OP}= O\bullet 1_{ P}
$$
$$
(O\bullet F)\star  (O\bullet G)=O\bullet (F\star G).
$$
In other words, when the monoid $\mathcal O_k$ acts on the set $\mathcal O_k$ by multiplication and on the set $\mathcal G_k$ by scaling, this constitutes an action of $\mathcal O_k$ on the groupoid. 
\begin{example}\label{zeroscale}
The scaling $0\bullet F$ of any morphism $F$ by the object $0$ is the morphism $0$. In particular the identity morphism $1_0=0\bullet\epsilon$ is $0$.
\end{example}
\begin{rem}
By calling a certain object $0$ and calling a certain morphism $0$ we are creating potential confusion. Of course these are not a zero object and a zero morphism in the category-theoretic sense.\end{rem}

\begin{example}
$\mathcal U$ is an $\mathbb R$-valued morphism from $1$ to $0$, and $\mathcal W$ is its inverse.
\end{example}

\begin{example}\label{minus e}
$e$ is a $\mathbb Z$-valued morphism from $-1$ to $1$. Thus for every $k$-valued object $O$ we have a $k$-valued morphism $O\bullet e$ from $-O$ to $O$ and a $k$-valued morphism $-O\bullet e$ from $O$ to $-O$.
These are inverses:
\begin{equation}\label{-e}
-O\bullet e=(O\bullet e)^{-1}.
\end{equation}
To see this, it suffices (because scaling by $O$ is a groupoid map) to consider the case $O=1$. One of many ways of seeing that $-\bullet e$ is the inverse of $e$ is Equation (\ref{eD}).
\end{example}

\subsection{The $\star$-product of a morphism and a polytope invariant}

A multiplicative polytope invariant $G\in \mathcal I_k$ has, so to speak, a target but no source. (A point in $V$ is a polytope, but $V$ itself is not.) The object $p(G)\in \mathcal O_k$ given by 
$$
p(G)_V=G_V(\ast)
$$
will be called the \emph{location} of $G$.

\begin{defin}
If $F\in \mathcal G_k$, $G\in \mathcal I_k$, and $s(F)=p(G)$, then $F\star G\in \mathcal I_k$ is defined by equation (\ref{comp}). Here the summation is over the cells in a cell structure of a polytope $P\subset V$.
\end{defin}
\begin{thm}
$F\star G$ is an element of $\mathcal I_k$. We have
\begin{itemize}
\item$(F\star G)\star H=F\star (G\star H)$ when $s(F)=t(G)$ and $s(G)=p(H)$
\item $p(F\star G)=t(F)$
\item$1_{p(G)}\star G=G$.
\end{itemize}
\end{thm}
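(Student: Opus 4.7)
The plan is to adapt the proof of Theorem~\ref{groupoidthm} to this mixed setting. The main structural difference is that the cells $\sigma$ of a polytope are affine convex polytopes rather than convex cones through the origin, so the notation $G_\sigma(\text{int}\sigma)$ tacitly uses translation invariance of $G$ to identify $\sigma$ with a polytope in the vector subspace $\text{span}(\sigma)$ (the linear direction of the affine span of $\sigma$).

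First I would verify that $(F\star G)_V(P)$ is independent of the cell structure by the refinement argument used for the groupoid: if $\tau$ is a cell of a refinement belonging to a coarser cell $\sigma$, the orthogonal splitting $\nu(\tau,P) = \nu(\sigma,P)\times W(\tau,\sigma)$ and multiplicativity of $F$ peel off a factor $F_{W(\tau,\sigma)}(W(\tau,\sigma)) = s(F)_{W(\tau,\sigma)}$; the compatibility $s(F)=p(G)$ rewrites this as $G_{W(\tau,\sigma)}(\ast)$, which combines with $G_\tau(\text{int}\tau)$ through multiplicativity and translation invariance of $G$ to give $G_\sigma(\text{int}\tau)$; summing over those $\tau$ belonging to a fixed $\sigma$ yields $G_\sigma(\text{int}\sigma)$. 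The valuation property, the multiplicativity on products $P\times Q$, and the normalization $(F\star G)_0(\ast)=1$ parallel the corresponding steps of Theorem~\ref{groupoidthm}. The additional translation invariance of $(F\star G)_V$ is immediate: translating $P$ by a vector permutes cells without altering normal cones (which live in vector subspaces perpendicular to $\text{span}(\sigma)$ and depend only on the germ of $P$ at a point of $\text{int}\sigma$), and $G_\sigma(\text{int}(\sigma+t))=G_\sigma(\text{int}\sigma)$ by translation invariance of $G$.

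For the three bulleted properties, the equation $p(F\star G)=t(F)$ follows by applying (\ref{comp}) to a one-point polytope $\ast\subset V$: the only cell structure has the single cell $\ast$, contributing $F_V(\nu(\ast,\ast))\cdot G_0(\text{int}\ast) = F_V(0)\cdot 1 = t(F)_V$. The identity property is equally short: in $(1_{p(G)}\star G)_V(P)$ each summand is $p(G)_{\sigma^\perp}\cdot G_\sigma(\text{int}\sigma) = G_{\sigma^\perp}(\ast)\cdot G_\sigma(\text{int}\sigma) = G_V(\text{int}\sigma)$ by multiplicativity of $G$ and translation invariance, and then $\sum_\sigma G_V(\text{int}\sigma) = G_V(P)$ since $P$ is the disjoint union of its cell interiors.

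Associativity is the step that requires the most care. Both $((F\star G)\star H)_V(P)$ and $(F\star(G\star H))_V(P)$ should reduce to the same double sum, indexed by pairs $\tau\subseteq\sigma$ of cells in a cell structure of $P$, of $F_{\sigma^\perp}(\nu(\sigma,P))\cdot G_{W(\tau,\sigma)}(\text{int}\,\nu(\tau,\sigma))\cdot H_\tau(\text{int}\tau)$. For $((F\star G)\star H)$, I would expand $(F\star G)_{\tau^\perp}(\nu(\tau,P))$ using the conical cell structure of $\nu(\tau,P)$ whose cells are the $\nu(\tau,\sigma)$ for $\sigma\supseteq\tau$, applying the identity $\nu(\nu(\tau,\sigma),\nu(\tau,P)) = \nu(\sigma,P)$ from \S\ref{normal cones}. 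For $F\star(G\star H)$, the nontrivial step is evaluating $(G\star H)_\sigma(\text{int}\sigma)$ as the constructible-set difference $(G\star H)_\sigma(\sigma) - (G\star H)_\sigma(\partial\sigma)$; computing each piece by the $\star$-product formula (cell structures being the faces of $\sigma$ and of $\partial\sigma$) and using $\nu(\tau,\partial\sigma)=\partial\nu(\tau,\sigma)$, the two contributions combine cell-by-cell into $G_{W(\tau,\sigma)}(\nu(\tau,\sigma))-G_{W(\tau,\sigma)}(\partial\nu(\tau,\sigma))=G_{W(\tau,\sigma)}(\text{int}\,\nu(\tau,\sigma))$. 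This unfolding of ``interior'' inside the iterated product is the only place where nontrivial bookkeeping is needed.
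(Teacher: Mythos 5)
Your proposal is correct and follows essentially the same approach as the paper, which simply states that the argument is identical to the proof of Theorem~\ref{groupoidthm} with translation-invariance of $F\star G$ following from that of $G$; you have usefully filled in the details of the well-definedness, translation-invariance, location, unit, and especially associativity steps. One small terminological point: when expanding $(G\star H)_\sigma(\text{int}\sigma)$ you say ``cell structures being the faces of $\sigma$ and of $\partial\sigma$'', but to land on the advertised double sum indexed by pairs of cells of $P$, one should take as cell structure of $\sigma$ the cells of $P$ contained in $\sigma$ (which do form a cell structure of $\sigma$); since the $\star$-product has already been shown independent of cell structure this is harmless, and the rest of your bookkeeping via $\nu(\tau,\partial\sigma)=\partial\nu(\tau,\sigma)$ goes through.
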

\begin{proof}
This is essentially identical to the proof of Theorem \ref{groupoidthm}. The fact that the right-hand side of (\ref{comp}) is independent of the cell structure is proved just as in the conical case. $(F\star G)_V$ is translation-invariant because $G$ is. The proof that it is a valuation and the proof of multiplicativity are essentially the same as in the conical case, as is the proof of associativity. The last two identities are proved like the identities $t(F\star G)=t(F)$ and $1_{t(G)}\star G=G$.
\end{proof}
This piece of structure could be summarized by saying that we have a functor from $\mathcal G_k$ to the category of sets, taking the object $O$ to the set of all $G\in \mathcal I_k$ such that $p(G)=O$, and taking the morphism $F$ to the map $G\mapsto F\star G$. Instead, we will use the following language. Note that if $F$ is a morphism from $ O$ to $P$ and $G$ is located at $O$ then $F\star G$ is located at $ P$. We speak of using $F$ to \emph{transport} $G$ from $O$ to $P$.

Location and transport are compatible with scaling:
$$
p(O\bullet F)=Op(F)
$$
$$
(O\bullet F)\star  (O\bullet G)= O\bullet (F\star G).
$$

Observe that the $\mathbb R$-valued invariant $\mathcal V$ is located at $0$ and the $\mathbb Z$-valued invariant $\chi$ is located at $1$.

\begin{example}\label{Vchi}
$$\mathcal W\star 0=\chi
$$
Here $0$ is the trivial polytope invariant and $\mathcal W$ is the dual volume.
To see what the equation says, evaluate the left hand side on a nonempty convex polytope $P\subset\mathbb R^n$ using the cell structure in which faces are cells:
$$
(\mathcal W\star 0)(P)=\sum_{F}\ \mathcal W_{F^\perp}(\nu(F,P))\cdot 0_{F}(\text{int}F).
$$
The quantity $0_F(\text{int}F)$ vanishes when the face $F$ has positive dimension, and it equals $1$ when $F$ is a point. Thus $(\mathcal W\star 0)(P)$ is the sum, over extreme points $v\in P$, of the conical volume of the dual cone $D\nu(v,P)$. The fact that the sum is $1$ essentially says that for an observer standing very far away from a nonempty convex polytope $P\subset \mathbb R^n$ the probability of the nearest point in $P$ being an extreme point is approximately $1$.

Because $\mathcal W$ is the inverse of $\mathcal U$, an equivalent statement is
$$
\mathcal U\star \chi=0.
$$
\end{example}

\section{Relation with McMullen's work}

The aim here is to recall several constructions in \cite{McM1} and to remark that they can be seen as instances of the $\star$-product. The rest of the paper does not depend on this material.

We begin by defining two rings $\hat\Sigma$ and $\hat\Pi$, and an element $\tau^\Sigma$ of $\mathcal G_{\hat\Sigma}$, and an element $\tau^\Pi$ of $\mathcal I_{\hat\Pi}$. 
\subsection{Some universal constructions}\label{universal}

Recall the cone group $\Sigma(V)$ of a finite-dimensional vector space, and recall that we denote the top quotient in its dimension filtration by $\hat \Sigma(V)$. Write $\hat \Sigma$ for the direct sum of $\hat \Sigma(V)$ over all finite-dimensional $V$ in $\mathbb R^\infty$. We make $\hat \Sigma$ into a ring as follows. If $V$ is orthogonal to $W$ then the product of an element of $\hat \Sigma(V)$ and an element of $\hat \Sigma(W)$ is the element of  $\hat \Sigma(V\times W)$ given by the map 
$$
\hat\Sigma(V)\times  \hat\Sigma(W)\to\hat\Sigma(V\times W)
$$
induced by the external product map $ \Sigma(V)\times  \Sigma(W)\to\Sigma(V\times W)$. If $V$ is not orthogonal to $W$ then the product of an element of $\hat \Sigma(V)$ and an element of $\hat \Sigma(W)$ is zero. 

Recall that $\Pi(V)$ is the group of coinvariants for the action of the translation group on $\mathcal P(V)$. Let $\Pi_n(V)\subset \Pi(V)$ be the image of $\mathcal P_n(V)$, so that there is a filtration
\begin{equation}\label{Mc filtration}
\Pi_0(V)\subset \Pi_1(V)\subset \Pi_2(V)\subset \dots \subset \Pi_d(V)=\Pi(V),
\end{equation}
where $d$ is the dimension of $V$, and write $\hat \Pi(V)$ for the top quotient $\Pi_d(V)/\Pi_{d-1}(V)$. (This quotient is also called the translational scissors congruence group of $V$.) Write $\hat \Pi$ for the direct sum of $\hat \Pi(V)$ over all $V\subset\mathbb R^\infty$. This becomes a ring in the same way as $\hat \Sigma$. 

Define a morphism $\tau^\Sigma$ with values in $\hat \Sigma$ by declaring that, for a cone $P$ in $V$, the element $\tau^\Sigma_V(P)$ is the image of $(P)\in\Sigma(V)$ under projection to $\hat \Sigma(V)\subset \hat \Sigma$. The target $t(\tau^\Sigma)$ is $0$, because if $\text{dim}V>0$ then the point element $(0)\in\Sigma(V)$ projects to $0\in\hat\Sigma(V)$. The source $s(\tau^\Sigma)$ is the object that sends $V$ to the element of $\hat\Sigma(V)\subset \hat\Sigma$ given by the entire vector space $V$.

Similarly define a multiplicative $\hat \Pi$-valued polytope invariant $\tau^\Pi$ with values in $\hat \Pi$ by declaring that, for a polytope $P$ in $V$, the element $\tau^\Pi_V(P)$ is the image of $(P)\in\Pi(V)$ under projection to $\hat \Pi(V)\subset \hat \Pi$. It is located at $0$.

\subsection{The injection $\sigma$}\label{sigma}

McMullen (\S 13 of \cite{McM1}) defines a map
$$
\sigma_V:\Pi(V)\to \bigoplus_{W\subset V} \hat\Sigma(W)\otimes \mathbb R
$$
and proves that it is injective. Let us describe it as a $\star$-product. 

Consider the ring $\hat \Sigma\otimes_{\mathbb Z} \mathbb R$. This is the coproduct of $\hat\Sigma$ and $\mathbb R$ in the category of (commutative unital) rings, via the ring maps 
$$
\xi\mapsto \xi\otimes 1
$$$$
x\mapsto 1\otimes x.
$$
Applying the first of these ring maps to $\tau^\Sigma$ we obtain a morphism with values in $\hat \Sigma\otimes_{\mathbb Z} \mathbb R$ which may be written as $\tau^\Sigma\otimes 1$:
$$
(\tau^\Sigma\otimes 1)_V(P)=\tau^\Sigma_V(P)\otimes 1.
$$
Its target is $0\otimes 1=0$. 

Applying the second ring map to $\mathcal V$, we obtain a multiplicative polytope invariant which may be written as $1\otimes \mathcal V$:
$$
(1\otimes \mathcal V)_V(P)=1\otimes \mathcal V_V(P).
$$
It is located at $1\otimes 0=0$.

Let $\sigma$ be $(\tau^\Sigma \otimes 1)^{-1}\star (1 \otimes \mathcal V)$, a multiplicative polytope invariant. 
\begin{rem}
Although $(\tau^\Sigma )^{-1}$ has source $0$ and $ \mathcal V$ is located at $0$, their $\star$-product is undefined because they take values in two different rings. The function of the tensor product is to force them both into the same larger ring.
\end{rem}
The value of the map 
$$
\sigma_V:\Pi(V)\to \hat\Sigma\otimes \mathbb R
$$
on a convex polytope $P$ is given by
$$
\sigma_V(P)=\sum_F\  \tau_{F^\perp}(D\nu(F,P))\otimes \mathcal V_F(F),
$$
where $F$ runs through the faces of $P$. The range of this map $\sigma_V$ is contained in the subgroup $\bigoplus_{W\subset V}\hat \Sigma(W)\otimes \mathbb R$, and when regarded as a map of $\Pi(V)$ into that subgroup this is McMullen's $\sigma_V$.

\subsection{The splitting of the dimension filtration}\label{M split}

One of the main results of \cite{McM1} is that for a finite-dimensional vector space $V$ there is a canonical splitting of the dimension filtration (\ref{Mc filtration}) of $\Pi(V)$. For each $n$-dimensional vector subspace $W\subset V$ there is an obvious map $\hat \Pi(W)\to \Pi_n(V)/\Pi_{n-1}(V)$, and these may be combined to make a map
\begin{equation}\label{surj}
\bigoplus_{\text{dim}W=n} \hat \Pi(W)\to \Pi_n(V)/\Pi_{n-1}(V)
\end{equation}
which is clearly surjective. McMullen in effect produces a map 
\begin{equation}\label{Msplitting}
\Pi(V)\to \bigoplus_{W\subset V}\hat \Pi(W)
\end{equation}
and shows that it induces, for each $n$, a left inverse for (\ref{surj}), thus splitting the filtration (\ref{Mc filtration}) and also establishing that (\ref{surj}) is an isomorphism. It follows that (\ref{Msplitting}) is itself an isomorphism of groups.

We can explain and perhaps simplify the proof using $\star$. The idea is to form the $\star$-product of $\mathcal W$ and $\tau^\Pi$. Again this is nonsense on the face of it because $\mathcal W$ and $\tau^\Pi$ take values in two different rings, namely $\hat\Pi$ and $\mathbb R$, but in this case one does not need anything so drastic as a tensor product to fit the two rings into a larger ring. 

One of the deeper facts about translational scissors congruence (see \cite{JT}, or Chapter 3 of \cite{Sah}) is that when the vector space $V$ has positive dimension then the group $\hat \Pi(V)$ has a canonical real vector space structure. With respect to this structure the external product $\hat\Pi(V)\times \hat\Pi(W)\to \hat \Pi(V\times W)$ is $\mathbb R$-bilinear. Now enlarge the ring $\hat\Pi$ by replacing the summand $\mathbb Z=\hat \Pi(0)\subset\hat \Pi$ by a copy of $\mathbb R$. Call the result $\hat\Pi_{\mathbb R}$. This is again a ring, and in fact an $\mathbb R$-algebra. Therefore we can view both $\mathcal W$ and $\tau^\Pi$ as taking values in $\hat\Pi_{\mathbb R}$ and so combine them to make $\mathcal W\star \tau^\Pi$, a multiplicative polytope invariant with values in $\hat\Pi_{\mathbb R}$. 

Explicitly, for a polytope $P\subset V$
$$
(\mathcal W\star \tau^\Pi)_V(P)=\sum_{\sigma}\ \mathcal W(\nu(\sigma,P))\tau^\Pi(\sigma),
$$
where $\sigma$ ranges over the cells in a cell structure of the polytope $P$. (If $P$ is convex, as in \cite{McM1}, then these can be taken to be the faces of $P$.)

To repeat, the reason why it is possible to multiply the real number $\mathcal W(\nu(\sigma,P))$ by the scissors congruence class $\tau^\Pi(\sigma)$ is the deep fact mentioned above. 

In the exceptional case when $\sigma$ is a $0$-cell, the expression $\mathcal W(\nu(\sigma,P))\tau^\Pi(\sigma)$ denotes an element of $\hat\Pi_{\mathbb R}(0)=\mathbb R$ rather than $\hat\Pi(0)=\mathbb Z$. However, the sum of this expression over all $0$-cells is
$$
\sum_{\text{dim}\sigma =0} \mathcal W(\nu(\sigma,P)),
$$
since $\tau^\Pi(\sigma)=1$ for every $0$-cell $\sigma$, and by (\ref{Vchi}) this is the integer $\chi(P)$. Thus in fact $\mathcal W\star\tau^\Pi$ takes values in the smaller ring $\hat\Pi$ after all!

The map $(\mathcal W\star \tau^\Pi)_V:\Pi(V)\to \hat \Pi$ defined above is (except for a change in codomain) the required map (\ref{Msplitting}).

\subsection{Frame invariants}

McMullen (\cite{McM1}, \S10) defines a collection of real valuations $f_U$ for polytopes in a finite-dimensional vector space $V$ and shows that when taken together these invariants separate $\Pi(V)$. There is one of these for each frame $U=(u_1,\dots ,u_k)$ in $V$, i.e. for each finite orthonormal sequence of vectors. The definition can be given as follows.

Let $P\subset V$ be a nonempty convex polytope (of any dimension). The frame $U$ determines a sequence of faces of $P$. Let $F_1^UP$ be the face of $P$ on which the inner product $\langle -,u_1\rangle$ is maximized, let $F_2^UP$ be the face of $F_1^UP$ on which the inner product $\langle -,u_2\rangle$ is maximized, and so on. The face $F_j^UP$ has dimension at most $d-j$ where $d$ is the dimension of $V$, because it is contained in an affine space orthogonal to the vectors $u_1,\dots ,u_j$. The number $f_U(P)$ is defined to be the $(d-k)$-dimensional volume $\mathcal V_{d-k}(F_k^UP)$.

We now indicate how this can be placed in the context of multiplicative invariants and the $\star$-product.

Fix a frame $U=(u_1,\dots ,u_k)$ in $\mathbb R^\infty$, spanning a vector space $W$. The frame determines a notion of positivity for vectors in $\mathbb R^\infty$, as follows. Say that $v\ge 0$ (with respect to $U$) if either the inner products $\langle v,u_i\rangle$ are all zero or the first one that is not zero is positive. This property of $v$ is invariant under positive dilatation and is closed under addition. For every $v$, either $v\ge 0$ or $-v\ge 0$. The vectors satisfying both $v\ge 0$ and $-v\ge 0$ are those belonging to $W^\perp$.

For a (finite-dimensional) convex cone $P\subset\mathbb R^\infty$ we write $P\ge 0$ if $v\ge 0$ for all $v\in P$.

Make a $\mathbb Z$-valued function of convex cones (depending on the frame $U$) by defining $L^U(P)=1$ if $P\ge 0$ and otherwise $L^U(P)=0$. This passes the hyperplane test of Lemma \ref{cone cutlemma}, because if $P\cup Q$ is convex and $P\cap Q\ge 0$ then either $P\ge 0$ or $Q\ge 0$. Moreover, it is multiplicative. Thus it gives an element of $\mathcal G_{\mathbb Z}$. Since $L^U(P)$ is independent of the ambient finite-dimensional vector space, this morphism is absolute; that is, the target $t(L^U)$ is $1$. The source $s(L^U)$ is the object $O^{W^\perp}$ defined in Example \ref{space object}.

Scale $L^U$ by the object $\mathcal O^W$ to obtain a morphism $O^{W}\bullet L^U$ with source $O^{W}O^{W^\perp}=O^0=0$ and target $O^{W}$. Transport $\mathcal V$ by $xO^W\bullet L^U$ to get an $\mathbb R\lbrack x\rbrack$-valued multiplicative polytope invariant 
$$
(xO^W\bullet L^U)\star \mathcal V
$$
located at $xO^W$. 

We leave it as an exercise for the reader to verify that if the $d$-dimensional vector space $V$ contains the $k$-frame $U$ then for any nonempty convex polytope $P\subset V$ the coefficient of $x^k$ in the degree $k$ polynomial 
$$
((xO^W\bullet L^U)\star Vol)_V(P)
$$
is McMullen's $f_U(P)$.

Here is a further exercise, which the author has not succeeded in carrying out: use the $\star$-product to reformulate and perhaps streamline McMullen's proof that the frame invariants separate elements of $\Pi(V)$, and to clarify their relationship with the invariant $\sigma$ of \S\ref{sigma}.

\section{The local Euler invariant and the $\star$-product}\label{e star}
We have seen (Example \ref{minus e}) that, up to scaling, the morphism $e$ is its own inverse. We now show that, up to scaling, $\star$-product with $e$ on the left is given by the interior operator and $\star$-conjugation by $e$ corresponds to dilatation by $-1$.

For every morphism $F$ there is precisely one object $O$ such that $(O\bullet e)\star F$ is defined, namely $-t(F)$, and there is precisely one object $O$ such that $F\star (O\bullet e)$ is defined, namely $s(F)$. Similarly every multiplicative polytope invariant $G$ can be transported by precisely one scaling of $e$, namely $-p(G)\bullet e$. We will give formulas for 
$(-p(G)\bullet e)\star G$, $(-t(F)\bullet e)\star F$, and $F\star (s(F)\bullet e)$.

If $G$ belongs to $\mathcal I_k$, denote by $GI$ the element of $\mathcal I_k$ defined by $(GI)_V=G_V\circ I$. Here we are thinking of $G_V$ as a (translation-invariant) group homomorphism $\mathcal P(V)\to k$ and composing it with $I:\mathcal P(V)\to \mathcal P(V)$. Thus, for example, if $P\subset V$ is a convex polytope then 
$$
(GI)_V(P)=(-1)^{dim (P)}G_V(\text{int}P).
$$
\begin{prop}\label{ep}
For any $G\in \mathcal I_k$ we have 
$$
(-p(G)\bullet e)\star G=-\bullet GI.
$$
Equivalently,
$$
GI=(p(G)\bullet e)\star (-\bullet G)
$$
\end{prop}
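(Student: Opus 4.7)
The plan is to unfold both sides of the claimed identity on a triangulation of $P$ and then reduce to a purely combinatorial fact about Euler characteristics of links. By definition of the $\star$-product,
$$
((-p(G)\bullet e)\star G)_V(P) = \sum_\sigma (-p(G)\bullet e)_{\sigma^\perp}(\nu(\sigma,P))\cdot G_\sigma(\text{int}\,\sigma),
$$
where $\sigma$ ranges over the simplices of a triangulation of $P$. Unpacking the scaling,
$$
(-p(G)\bullet e)_{\sigma^\perp}(\nu(\sigma,P)) = (-1)^{\dim\sigma^\perp}\,p(G)_{\sigma^\perp}\cdot e(\nu(\sigma,P)),
$$
and using multiplicativity together with translation-invariance of $G$ to collapse $p(G)_{\sigma^\perp}\cdot G_\sigma(\text{int}\,\sigma) = G_V(\text{int}\,\sigma)$ (via the orthogonal decomposition $V = \text{span}(\sigma)\times \sigma^\perp$, after translating $\sigma$ so that $0\in\text{int}\,\sigma$). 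Since $(-1)^{\dim\sigma^\perp} = (-1)^{\dim V}(-1)^{\dim\sigma}$ and $(-\bullet GI)_V(P) = (-1)^{\dim V}G_V(IP)$, the proposition reduces to the identity
$$
G_V(IP) = \sum_\sigma (-1)^{\dim\sigma}\, e(\nu(\sigma,P))\cdot G_V(\text{int}\,\sigma). \qquad (*)
$$

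To establish $(*)$, apply $G_V$ to formula (\ref{IVtriang}) to get $G_V(IP) = \sum_\sigma (-1)^{\dim\sigma}\,G_V(\sigma)$. Then expand each $G_V(\sigma) = \sum_{\tau\le\sigma}G_V(\text{int}\,\tau)$ (since $\sigma$ is the disjoint union of the interiors of its faces), and interchange the order of summation:
$$
G_V(IP) = \sum_\tau G_V(\text{int}\,\tau)\cdot \sum_{\sigma\ge\tau}(-1)^{\dim\sigma}.
$$
The simplices $\sigma$ of the triangulation having $\tau$ as a face correspond bijectively, via $\sigma = \tau*\sigma'$, to the simplices $\sigma'$ of $\text{Lk}(\tau,P)$ together with $\sigma' = \emptyset$ (corresponding to $\sigma = \tau$). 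Since $\dim(\tau*\sigma') = \dim\tau+\dim\sigma'+1$ and $\dim\emptyset = -1$, a short computation gives
$$
\sum_{\sigma\ge\tau}(-1)^{\dim\sigma} = (-1)^{\dim\tau}\bigl(1-\chi\text{Lk}(\tau,P)\bigr).
$$

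To finish, I will identify this factor with $e(\nu(\tau,P))$. Since $\nu(\tau,P)$ is the cone corresponding to the germ of $P$ at an interior point of $\tau$ in the transverse direction, its intersection with the unit sphere of $\text{span}(\tau)^\perp$ is PL-homeomorphic to the link $\text{Lk}(\tau,P)$. Hence $e(\nu(\tau,P)) = 1-\chi(\nu(\tau,P)\cap S) = 1-\chi\text{Lk}(\tau,P)$, and $(*)$ follows. The main obstacle is the bookkeeping with dimensions and the recognition that the local Euler invariant $e(\nu(\tau,P))$ is exactly the Euler-characteristic weight produced by reorganizing the sum (\ref{IVtriang}) in terms of open cells; once that observation is made, everything is a matter of routine sign-tracking.
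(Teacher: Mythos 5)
Your proof is correct, but it takes a genuinely different and somewhat longer route than the paper's. The paper evaluates only on a nonempty convex polytope $P$ (sufficient because convex polytopes generate $\mathcal P(V)$), using the cell structure whose cells are the faces of $P$. It then observes that for every proper face $\sigma$ the normal cone $\nu(\sigma,P)$ is an ordinary convex cone of positive dimension, so $e(\nu(\sigma,P))=0$ and the term drops out; only $\sigma=P$ survives, and that single term immediately gives $(-\bullet GI)_V(P)$. Your argument instead works with an arbitrary triangulation, unwinds (\ref{IVtriang}), interchanges the order of summation, and re-derives the identity $e(\nu(\tau,P))=1-\chi\,\text{Lk}(\tau,P)$ that makes everything match. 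Interestingly, the paper's own remark immediately after the proposition says the result ``is essentially a restatement of Equation (\ref{triang}), since $e(\nu(\sigma,P))=1-\chi\text{Lk}(\sigma,P)$''; what you have done, in effect, is turn that throwaway remark into the actual proof, at the cost of a more involved computation. The paper's approach buys brevity by choosing the one cell structure on which almost all terms vanish; yours buys transparency by showing explicitly how the $\star$-product with $e$ reorganizes the inclusion--exclusion data of a triangulation into the interior operator $I$.
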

\begin{proof}
We prove the first equation. Let $P$ be a nonempty $m$-dimensional convex polytope in a $d$-dimensional vector space $V$. We have 
$$
((-p(G)\bullet e)\star G)_V(P)=\sum_\sigma (-p(G)\bullet e)_{\sigma^\perp}(\nu(\sigma,P))\cdot G_\sigma(\text{int}\sigma),
$$
where the sum is over cells of any cell structure of $P$. Use the cell structure in which the cells are the faces. For every face $\sigma$ with the exception of $P$ itself the corresponding term is zero, 
since $\nu(\sigma,P)$ is an ordinary convex cone of positive dimension and therefore $e(\nu(\sigma, P))$ is zero. The term corresponding to $\sigma=P$ is
$$
(-1)^{d-m} G_{P^\perp}(0)\cdot G_P(\text{int}P)=(-1)^{d-m}G_V(\text{int}P)=(-1)^d(GI)_V(P)=(-\bullet GI)_V(P).
$$
\end{proof}
\begin{rem}
The preceding result is essentially a restatement of Equation (\ref{triang}), since $e(\nu(\sigma,P))=1-\chi \text{Lk}(\sigma,P)$.
\end{rem}
Here is the conical analogue:
\begin{prop}\label{et}
For any $F\in \mathcal G_k$ we have 
$$
(-t(F)\bullet e)\star F=-\bullet FI.
$$
Equivalently, 
$$
FI=(t(F)\bullet e)\star (-\bullet F).
$$
\end{prop}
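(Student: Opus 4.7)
The proof will closely parallel that of Proposition \ref{ep}. It suffices to verify the first identity on $(P)$ for a nonempty convex cone $P$, since convex cones generate $\Sigma(V)$ (either by Lemma \ref{cone cutlemma}, or simply because every cone is a finite union of convex cones). Fix such a $P$ of dimension $m$ in a subspace $V\subset \mathbb R^\infty$ of dimension $d$, and expand the left side using the conical cell structure on $P$ whose cells are the faces of $P$:
$$
((-t(F)\bullet e)\star F)_V(P)=\sum_\sigma (-1)^{d-\text{dim}\sigma}\cdot t(F)_{\sigma^\perp}\cdot e_{\sigma^\perp}(\nu(\sigma,P))\cdot F_\sigma(\text{int}\sigma).
$$

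The key point is that only the term $\sigma = P$ contributes. For any proper face $\sigma \ne P$, a supporting functional $\ell$ cutting $\sigma$ out of $P$ has $\ell^\ast \in \sigma^\perp$, and $\nu(\sigma,P)\subset \sigma^\perp$ lies in the closed half-space $\{\langle -,\ell^\ast\rangle \le 0\}$; since $P$ has points strictly below the hyperplane $\{\ell=0\}$, this normal cone is not a vector subspace, so $e_{\sigma^\perp}(\nu(\sigma,P))=0$ by the explicit formula for $e$ on convex cones. When $\sigma = P$, on the other hand, $\nu(P,P)$ is the zero cone in $\sigma^\perp$, a zero-dimensional vector subspace, and $e_{\sigma^\perp}(0) = 1$. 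Multiplicativity of $F$ then gives $t(F)_{P^\perp}\cdot F_P(\text{int}P)=F_{P^\perp}(0)\cdot F_P(\text{int}P)=F_V(\text{int}P)$, so the left side simplifies to $(-1)^{d-m}F_V(\text{int}P)$.

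For the right side, the convex cone $P$ is an $m$-dimensional PL manifold (possibly with boundary), so by \S\ref{interior cone} one has $I(P)=(-1)^m(\text{int}P)$ in $\Sigma(V)$. Therefore $(-\bullet FI)_V(P) = (-1)^d F_V(I(P)) = (-1)^{d+m}F_V(\text{int}P)=(-1)^{d-m}F_V(\text{int}P)$, matching the left side. The equivalent formulation $FI=(t(F)\bullet e)\star(-\bullet F)$ follows by applying the first identity to $-\bullet F$ in place of $F$ and simplifying, using $(-\bullet F)I=-\bullet FI$ and $t(-\bullet F)=-t(F)$. The main (and essentially only) obstacle is the geometric claim that $\nu(\sigma,P)$ is not a subspace for any proper face $\sigma$, which is the elementary convexity argument sketched above.
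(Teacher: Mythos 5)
Your proof is correct and is precisely the argument the paper intends: the paper's own proof of Proposition~\ref{et} is omitted with the remark that it is ``so similar to the preceding proof'' of Proposition~\ref{ep}, and your argument is that analogue, using the face cell structure so that only the top cell $\sigma=P$ survives because $e$ kills normal cones that are not vector subspaces. The only cosmetic difference is that where the paper simply cites that $\nu(\sigma,P)$ is an ordinary convex cone of positive dimension for a proper face $\sigma$, you re-derive this from the supporting functional; that argument is sound once one notes that $\ell$ vanishes on $\operatorname{span}(\sigma)$, so a point $p\in P$ with $\ell(p)<0$ yields an element of $\nu(\sigma,P)$ (its $\sigma^\perp$-component) with $\langle -,\ell^*\rangle<0$.
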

\begin{proof}
This is so similar to the preceding proof that we omit it.
\end{proof}
We now turn from left $\star$-multiplication by scalings of $e$ to right $\star$-multiplication by scalings of $e$. The key to this will be the following immediate consequence of Lemma \ref{DI}:
\begin{equation}\label{FDIDI}
FDIDI=-\bullet F\Delta(-1).
\end{equation}
Since $FD=F^{-1}$, Proposition \ref{et} implies
$$
FDI=F^{-1}I=(t(F^{-1})\bullet e)\star (-\bullet F^{-1})=(s(F)\bullet e)\star (-\bullet F^{-1}).
$$
Using (\ref{-e}) this implies
$$
FDID=(-\bullet F)\star (-s(F)\bullet e)=-\bullet (F\star (s(F)\bullet e)).
$$
Using Proposition \ref{et} again, we find
$$
FDIDI=(-t(F)\bullet e)\star F\star (s(F)\bullet e).
$$
Thus (\ref{FDIDI}) gives 
\begin{equation}\label{eqn}
-\bullet F\Delta(-1)=(-t(F)\bullet e)\star F\star (s(F)\bullet e).
\end{equation}
This may also be written
$$
F\star (s(F)\bullet e)=(t(F)\bullet e)\star (-\bullet F\Delta(-1).
$$
Using Proposition \ref{et} again, we conclude:
\begin{prop}\label{te}
$$
F\star (s(F)\bullet e)=F\Delta(-1)I.
$$
\end{prop}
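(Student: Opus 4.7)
The plan is to take the equation (\ref{eqn}) that was just established,
$$-\bullet F\Delta(-1) = (-t(F)\bullet e) \star F \star (s(F)\bullet e),$$
and use it as the starting point. By (\ref{-e}) the morphism $t(F)\bullet e$ is the two-sided inverse of $-t(F)\bullet e$ in the groupoid $\mathcal G_k$, so left-composing both sides with $t(F)\bullet e$ isolates the product $F \star (s(F)\bullet e)$ on the right and yields
$$F \star (s(F)\bullet e) = (t(F)\bullet e) \star (-\bullet F\Delta(-1)).$$
This is the identity already recorded in the paragraph preceding the proposition.

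The next step is to recognize the right-hand side as an instance of Proposition \ref{et}. That proposition, applied to an arbitrary morphism $H\in \mathcal G_k$, says
$$HI = (t(H)\bullet e)\star (-\bullet H).$$
I would take $H = F\Delta(-1)$. The only small verification needed is that $t(F\Delta(-1)) = t(F)$: the dilatation $\Delta(-1)$ fixes the cone $\{0\}\subset V$ cell by cell, so $(F\Delta(-1))_V(0) = F_V(0) = t(F)_V$ for every $V$. (Equivalently, $\Delta(-1)$ and $F$ both respect the external product and $\Delta(-1)$ acts trivially on the generator $(0)\in \Sigma(0)$, so source and target behave correctly under this scaling.)

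With $t(F\Delta(-1))=t(F)$ in hand, Proposition \ref{et} gives
$$(F\Delta(-1))I = (t(F)\bullet e) \star (-\bullet F\Delta(-1)),$$
which is exactly the right-hand side already computed. Combining the two displays yields $F\star (s(F)\bullet e) = F\Delta(-1)I$, completing the proof.

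There is essentially no obstacle here: the real work was done in deriving (\ref{eqn}) via Lemma \ref{DI} and two applications of Proposition \ref{et}. The present proposition is a one-line reinterpretation, and the only thing worth pausing over is checking that the scaling object on the $e$-factor matches after replacing $F$ by $F\Delta(-1)$, which it does because $\Delta(-1)$ preserves the trivial cone.
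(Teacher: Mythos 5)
Your proof is correct and follows the paper's own argument essentially line by line: you start from equation (\ref{eqn}), invert the left factor using (\ref{-e}) to isolate $F\star (s(F)\bullet e)$, and then apply Proposition \ref{et} to $H=F\Delta(-1)$ after checking $t(F\Delta(-1))=t(F)$. The paper compresses this last application of Proposition \ref{et} into the phrase ``Using Proposition \ref{et} again,'' so your explicit verification that $\Delta(-1)$ fixes the trivial cone, and hence preserves the target, is a welcome piece of bookkeeping rather than a deviation.
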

\begin{example}\label{e}
In the case when $F$ is the conical volume $\mathcal U$, this means that 
$$
\mathcal U\star e=UI
$$
and therefore by (\ref{VIcone}) 
$$
\mathcal U\star e=-\bullet \mathcal U.
$$
In other words, the $\mathbb Z$-valued morphism $e$ coincides with the $\mathbb R$-valued morphism $\mathcal W\star(-\bullet \mathcal U)$.
\end{example}

\section{Euclidean invariants}\label{Euclideancase}
In the rest of this paper we will be concerned almost exclusively with functions of polytopes and cones that are invariant under isometry. Here we introduce notation for those and explain how to dispense with $\mathbb R^\infty $ when working with them.

The group of invertible linear isometries $\mathbb R^\infty\to \mathbb R^\infty$ acts on the set of finite-dimensional vector subspaces and on the set of cones and the set of polytopes, preserving all relevant structure. Therefore it acts on the sets $\mathcal O_k$, $\mathcal G_k$, and $\mathcal I_k$ in such a way as to preserve $\star$-products, sources, targets, locations, scaling, and so on.

Let $\mathcal G^O_k\subset \mathcal G_k$ be the set of all morphisms fixed by this action, in other words those morphisms $F$ such that $F_V(P)=F_{V'}(P')$ whenever there is a linear isometry $V\cong V'$ taking $P$ to $P'$. Examples of such \emph{Euclidean morphisms} include $\epsilon$, $e$, $\mathcal U$, and $\mathcal W$. 

Define $\mathcal I^O_k\subset \mathcal I_k$ likewise. Examples of \emph{Euclidean multiplicative polytope invariants} include $\chi$ and $\mathcal V$. 

The corresponding set $\mathcal O^O_k\subset \mathcal O_k$ is simply $k$. That is, the only fixed objects are the scalars.

Thus there is a sub-groupoid $\mathcal G^O_k\subset \mathcal G_k$ whose object set is $k$. Its morphisms transport Euclidean polytope invariants from one element of $k$ to another. Elements of $\mathcal G^O_k$ and $\mathcal I^O_k$ can be scaled by elements of $k$.

Note that when $F$ belongs to $\mathcal G^O_k$ then $F_V$ is determined by $F_{\mathbb R^n}$ where $n=\text{dim}V$. In fact, an element $F$ of $\mathcal G^O_k$ can be described more economically as a sequence $\lbrace F_n\rbrace$ where
\begin{itemize}
\item $F_n$ is a $k$-valued valuation of cones in $\mathbb R^n$
\item $F_n$ is invariant under linear isometries of $\mathbb R^n$
\item $F_{p+q}(P\times Q)=F_p(P)F_q(Q)$ for any cones $P\subset \mathbb R^p$ and $Q\subset \mathbb R^q$, where $\mathbb R^p\times \mathbb R^q$ is being identified with $\mathbb R^{p+q}$ in the usual way
\item $F_0(0)=1$.
\end{itemize}
We will often use this description. At the same time we will also regard the function $F_n$ as defined not only on cones in $\mathbb R^n$ but on cones (or constructible conical sets) in any $n$-dimensional vector space (with inner product).

Likewise an element $F$ of $\mathcal I^O_k$ can be described as a sequence $\lbrace F_n\rbrace$ where
\begin{itemize}
\item $F_n$ is a $k$-valued valuation of polytopes in $\mathbb R^n$
\item $F_n$ is invariant under affine isometries (Euclidean symmetries) of $\mathbb R^n$
\item $F_{p+q}(P\times Q)=F_p(P)F_q(Q)$
\item $F_0(0)=1$.
\end{itemize}
Again when describing $F$ in this way we will regard $F_n$ as defined not only on polytopes in $\mathbb R^n$ but on polytopes (or constructible sets) in any $n$-dimensional vector space.

In this notation the formula for the $\star$-product reads
\begin{equation}\label{star Euclidean}
(F\star G)_n(P)=\sum_\sigma F_{n-|\sigma|}(\nu(\sigma,P))\cdot G_{|\sigma|}(\text{int}\sigma),
\end{equation}
where $P$ is a polytope or cone in $\mathbb R^n$ and $\sigma$ ranges over all cells in a cell structure or conical cell structure of $P$, and $|\sigma|$ is the dimension of $\sigma$.

\begin{rem}\label{abstract}
It can be useful to extend the domains of the invariants a little further. Suppose that $F_n$ is a valuation of polytopes in $\mathbb R^n$ invariant under isometry. It is not hard to see that it extends to a valuation of polytopes of dimension at most $n$ in any vector space (with inner product), invariant under isometries between affine subspaces. One can go further and define it on Euclidean polytopes of dimension at most $n$ in the following more abstract sense. A \emph{linear Euclidean structure} on an $m$-simplex $\sigma$ is an equivalence class of affine embeddings $\sigma\to \mathbb R^m$ with respect to the action of isometries. Such a structure restricts to give such a structure on each face, and more generally on each linearly embedded simplex in $\sigma$. A \emph{simplicial Euclidean structure} on a triangulation of a polytope ($=$ compact PL space) $P$ consists of a choice of linear Euclidean structure on each simplex, compatible with restriction to faces. Such a structure determines another such structure on any refinement of the triangulation, which in turn determines it. A choice of such a structure for one, hence any finer, triangulation of $P$ is a \emph{Euclidean structure} on $P$, and $P$ is called a \emph{Euclidean polytope} if it is equipped with such a structure. One can also speak of constructible sets. This whole remark has a conical analogue, which we will not write out.
\end{rem}

\subsection{Intrinsic volume}

There is a canonical way of extending the $n$-dimensional volume $\mathcal V_n$ to a valuation of all polytopes in $\mathbb R^d$ for $d>n$. Let us construct it using the $\star$-product.

Begin with the volume invariant $\mathcal V$, a multiplicative $\mathbb R$-valued Euclidean polytope invariant located at $0$. Introduce a polynomial indeterminate $x$ and regard $\mathcal V$ as taking values in the larger ring $\mathbb R\lbrack x\rbrack$; it is still located at $0$. Scale this by $x$ to obtain the invariant $x\bullet \mathcal V$, again located at $0$. This is shorthand for the following bit of bookkeeping: for a polytope $P\subset \mathbb R^n$,
$$
(x\bullet \mathcal V)_n(P)=\mathcal V_n(P)x^n.
$$

When the dual volume $\mathcal W$, an $\mathbb R$-valued morphism with source $0$ and target $1$, is regarded as taking values in the larger ring $\mathbb R\lbrack x\rbrack$, then $\mathcal W\star (x\bullet \mathcal V)$ becomes defined. It is located at $1$. Explicitly, this absolute multiplicative $\mathbb R\lbrack x\rbrack$-valued polytope invariant is given by 
\begin{equation}\label{absvol}
(\mathcal W\star (x\bullet \mathcal V))(P)=\chi(0,P)+\chi(1,P)x+\chi(2,P)x^2+\dots
\end{equation}
where the coefficient of $x^n$ is a sum over all of the $n$-dimensional simplices of a cell structure of $P$:
\begin{equation}\label{intrinsic}
\chi(n,P)=\sum_{|\sigma|=n}\ \mathcal W(\nu(\sigma,P)) \mathcal V_n(\sigma).
\end{equation}
The statement that $\mathcal W\star (x\bullet \mathcal V)$ is multiplicative means that
$$
\chi(n,P\times Q)=\sum_{p+q=n}\ \chi(p,P)\chi(q,Q).
$$

The quantity $\chi(n,P)$ is called the \emph{intrinsic $n$-dimensional volume} of $P$. It is an $\mathbb R$-valued Euclidean valuation for polytopes. It is absolute (independent of the ambient vector space), and it coincides with $n$-dimensional volume $\mathcal V_n$ if $P$ is at most $n$-dimensional, so that in particular it vanishes on polytopes of dimension $<n$.

It also has the following homogeneity property: for any positive real number $\lambda$, $\chi(n,\lambda P)=\lambda^n\chi(n,P)$. 
\begin{rem}
By a theorem of Hadwiger \cite{Had}, for each choice of $0\le n\le d$ there is, up to multiplication by a constant, precisely one real-valued valuation $F$ of polytopes in $\mathbb R^d$ that depends continuously on the polytope and is homogeneous of degree $n$ in this sense.
\end{rem}
By substituting $0$ for the indeterminate $x$ in (\ref{absvol}) and using (\ref{Vchi}) we find that
$$
\chi(0,P)=\chi(P).
$$
Thus if $P$ is $d$-dimensional then the polynomial $(\mathcal W\star (x\bullet \mathcal V))(P)$ has degree $d$ and has the form
$$
\chi(P)+\dots +\mathcal V_d(P)x^d.
$$
If $M$ is a closed $2$-manifold (and a Euclidean polytope), then by (\ref{intrinsic}) $\chi(0,M)$ is the sum, over vertices $v$ of a triangulation, of $\mathcal W(\nu(v,M))=\frac{2\pi-\theta_v}{2\pi}$, where $\theta_v$ is the total angle around $v$. The fact that this equals $\chi(M)$ can be seen as a kind of Gauss-Bonnet theorem, with all of the curvature of $M$ being concentrated at the vertices.

If $M$ is a manifold of dimension $n+1$ then $\chi(n,M)=\frac{1}{2}\mathcal V_n(\partial M)$, because when $\sigma$ is a codimension one simplex in a manifold $M$ then $\mathcal W(\nu(\sigma,M))$ is $\frac{1}{2}$ or $0$ according to whether $\sigma$ is or is not contained in $\partial M$.

If $M$ is a closed $m$-manifold then $\chi(j,M)=0$ when $m-j$ is odd, because by Corollary \ref{oddbend} the dual volume of the normal cone of an interior simplex of odd codimension is zero. 
In fact, the same applies if low-dimensional singularities are allowed: if $M$ is locally an $m$-manifold except in a set of dimension $<j$ then $\chi(j,M)=0$ if $m-j$ is odd.

Here is a slightly different way of organizing the invariants $\chi(n,-)$. Define
$$
S^{x,y}=(x\bullet \mathcal W)\star (y\bullet \mathcal V).
$$
This is a multiplicative polytope invariant with values in $\mathbb R\lbrack x,y\rbrack$, located at $x$. The absolute invariant considered above is $S^{1,x}$. For $P\subset \mathbb R^d$ we have
$$
S^{x,y}_d(P)=\sum_{i+j=d}\chi(j,P)x^iy^j .
$$
Note that, because $S^{x,y}$ is not absolute, we cannot unambiguously write $S^{x,y}(P)$; the dimension of the ambient vector space is indicated by a subscript.

The following identities hold:
$$S^{0,y}=y\bullet \mathcal V$$
$$S^{x,0}=x\bullet \chi$$
$$c\bullet S^{x,y}=S^{cx,cy}$$
The first of these follows from $0\bullet \mathcal W=1_0$, and merely restates the fact that $\chi(d,P)=\mathcal V_d(P)$ for $P\subset\mathbb R^d$. The second follows from $0\bullet \mathcal V=0=x\bullet 0$, and restates the fact that $\chi(0,-)$ is the Euler characteristic.

\subsection{Intrinsic conical volume}

The conical volume invariant $\mathcal U$ can be given the same treatment. Since $\mathcal U$ is a morphism from $1$ to $0$, its scaling $x\bullet \mathcal U$ is a morphism from $x$ to $0$ and $\mathcal W\star (x\bullet \mathcal U)$ is a morphism from $x$ to $1$. Explicitly,
\begin{equation}\label{abssphvol}
(\mathcal W\star (x\bullet \mathcal U))(P)=\mathcal W(0,P)+\mathcal W(1,P)x+\mathcal W(2,P)x^2+\dots
\end{equation}
where the coefficient of $x^n$ is a sum over all of the $n$-dimensional conical simplices of a conical cell structure of $P$:
$$
\mathcal W(n,P)=\sum_{|\sigma|=n}\ \mathcal W(\nu(\sigma,P))\mathcal U_n(\sigma).
$$
Multiplicativity means 
$$
\mathcal W(n,P\times Q)=\sum_{p+q=n}\ \mathcal W(p,P)\mathcal W(q,Q).
$$
Call $\mathcal W(n,P)$ the \emph{intrinsic $n$-dimensional conical volume} of $P$. As a function of the cone $P$ this is an absolute $\mathbb R$-valued Euclidean valuation. When $P$ has dimension at most $n$ it coincides with $n$-dimensional conical volume. When the cone $P$ is an $(n+1)$-manifold it is one half the $n$-dimensional conical volume of the boundary. 
\begin{rem}\label{thing}
When the cone $M$ is an $m$-manifold without boundary (or even when it is locally an $m$-manifold except in a set of dimension $<j$) then $\mathcal W(j,M)=0$ if $m-j$ is odd. 
\end{rem}
We have
$$
\mathcal W(0,P)=\mathcal W(P).
$$
Define
\begin{equation}\label{defTxy}
T^{x,y}=(x\bullet \mathcal W)\star (y\bullet \mathcal U).
\end{equation}
This is a morphism from $y$ to $x$ with values in $\mathbb R\lbrack x,y\rbrack$. The absolute invariant \newline $\mathcal W\star (x\bullet \mathcal U)$ considered above is $T^{1,x}$. For a cone $P\subset \mathbb R^d$ we have
$$
T^{x,y}_d(P)=\sum_{i+j=d}\mathcal W(j,P)x^iy^j .
$$
The following identities hold:
$$T^{0,y}=y\bullet \mathcal U,$$
$$T^{x,0}=x\bullet \mathcal W,$$
$$c\bullet T^{x,y}=T^{cx,cy}$$
We also have 
$$T^{x,x}=(x\bullet \mathcal W)\star (x\bullet \mathcal U)=x\bullet (\mathcal W \star \mathcal U)=x\bullet 1_1=1_x.$$
This means that
\begin{equation}\label{1}
1=\mathcal W(0,P)+\mathcal W(1,P)+\mathcal W(2,P)+\dots
\end{equation}
for every nonempty cone $P$. We also have
$$T^{x,y}\star T^{y,z}=T^{x,z},$$
since $(y\bullet \mathcal U)\star (y\bullet \mathcal W)=y\bullet (\mathcal U\star \mathcal W)=y\bullet 1_0=1_0$. For the same reason,
$$T^{x,y}\star S^{y,z}=S^{x,z}$$
The inverse of $T^{x,y}$ is $T^{y,x}$, which means that if $DP$ is the dual of the convex cone $P$ in $\mathbb R^d$ then 
$$T^{x,y}_d(DP)=T^{y,x}_d(P).$$
Equating coefficients of $x^ny^{d-n}$, this says that 
\begin{equation}\label{d-n}
\mathcal W(n,DP)=\mathcal W({d-n},P).
\end{equation}
If $k$ is an $\mathbb R$-algebra and $a$ and $b$ belong to $k$, then we may write $T^{a,b}$ for the element of $\mathcal G_k$ obtained from $T^{x,y}$ by specializing the indeterminates $x$ and $y$ to $a$ and $b$, in other words by applying the associated ring homomorphism $\mathbb R\lbrack x,y\rbrack\to k$. (Of course, $T^{a,b}$ depends on the $\mathbb R$-algebra structure of $k$ as well as on $a$ and $b$.) These elements form a sub-groupoid of $\mathcal G^O_k$ that is trivial in the sense that there is a unique isomorphism between any two objects. We do not in fact know how to produce any elements of $\mathcal G^O_{\mathbb R}$ other than the elements $T^{a,b}$ for $a,b\in\mathbb R$. When $k$ is an $\mathbb R$-algebra in more than one way, for example when $k=\mathbb R\otimes_{\mathbb Z} \mathbb R$, then there is a richer supply of morphisms, namely various kinds of generalized Dehn invariants. Before discussing these, we examine some consequences of the results of \S\ref{e star}.

\subsection{Volume and the duality/interior relation}\label{van}

By Example \ref{e}
\begin{equation}\label{Te}
e=T^{1,-1},
\end{equation}
and therefore more generally $x\bullet e=T^{x,-x}$. This is valid for any $x\in k$ when $k$ is any $\mathbb R$-algebra.

From Proposition \ref{ep} 
$$
S^{x,y}I=(x\bullet e)\star (-\bullet S^{x,y})=T^{x,-x}\star S^{-x,-y}=S^{x,-y}.
$$
Comparing coefficients of $x^iy^j$, we see that
$$
\chi(j,I\xi)=(-1)^j\chi(j,\xi)
$$
for every $\xi\in \mathcal P(V)$. This demonstrates again the vanishing of $\chi(m-2k-1,M)$ for a closed $m$-manifold.

Likewise from Proposition \ref{et} we have
$$
T^{x,y}I=T^{x,-y},
$$
so that
$$
\mathcal W(j,I\xi)=(-1)^j\mathcal W(j,\xi)
$$
for every $\xi\in \Sigma(V)$.

Equation (\ref{Te}) itself states that for any conical polytope $P$
$$
\mathcal W(0,P)-\mathcal W(1,P)+\mathcal W(2,P)-\dots =e(P).
$$
Combining this with Equation (\ref{1}) we obtain, for a nonempty cone $P$,
\begin{equation}\label{evencone}
\mathcal W(0,P)+\mathcal W(2,P)+\mathcal W(4,P)+\dots=\frac{1+e(P)}{2}
\end{equation}
and
\begin{equation}\label{oddcone}
\mathcal W(1,P)+\mathcal W(3,P)+\mathcal W(5,P)+\dots =\frac{1-e(P)}{2}.
\end{equation}

\subsection{Dehn invariants}\label{dehn}

Consider the two ring maps from $\mathbb R$ to $\mathbb R\otimes\mathbb R$
$$
a\mapsto a\otimes 1
$$$$
a\mapsto 1\otimes a.
$$
Apply the first of these to $x\bullet \mathcal W$ to make an $(\mathbb R\otimes\mathbb R)\lbrack x\rbrack$-valued morphism with source $0$ and target $x$,
$$
((x\bullet \mathcal W)\otimes 1)_n(P)=(x\bullet \mathcal W)_n(P)\otimes 1=(\mathcal W(P)\otimes 1)x^n.
$$
Apply the second to $y\bullet \mathcal V$ to make an $(\mathbb R\otimes\mathbb R)\lbrack y\rbrack$-valued multiplicative polytope invariant located at $0$,
$$
(1\otimes (y\bullet \mathcal V))_n(P)=1\otimes (y\bullet \mathcal V)_n(P)=(1\otimes \mathcal V_n(P))y^n.
$$
The $\star$-product now yields an $(\mathbb R\otimes\mathbb R)\lbrack x,y\rbrack$-valued multiplicative polytope invariant 
$$
\tilde S^{x,y}=((x\bullet \mathcal W)\otimes 1)\star (1\otimes (y\bullet \mathcal V)).
$$
Thus
$$
\tilde S^{x,y}_n(P)=\sum_{i+j=n}\ \tilde\chi(j,P)x^iy^j,
$$
 where 
 $$
 \tilde\chi(j,P)=\sum_{|\sigma|=j}\ \mathcal W(\nu(\sigma,P))\otimes \mathcal V_j(\sigma)\in \mathbb R\otimes\mathbb R.
 $$
The multiplication map $\mathbb R\otimes\mathbb R\to\mathbb R$ takes $\tilde S^{x,y}$ to $S^{x,y}$. That is, it takes $\tilde\chi(j,P)$ to the intrinsic volume $\chi(j,P)$ .

If $P$ is $d$-dimensional then $\tilde\chi(j,P)=0$ for $j>d$. Both $\tilde\chi(d,P)$ and $\tilde\chi(d-1,P)$ belong to $\mathbb Q\otimes \mathbb R$, because when the normal cone of a cell is at most one-dimensional then its dual volume is rational (in fact, one half of an integer). Therefore
$$
\tilde\chi(d,P)=1\otimes \chi(d,P)=1\otimes \mathcal V_d(P)
$$
and
$$
\tilde\chi(d-1,P)=1\otimes \chi(d-1,P).
$$
Likewise 
$$\tilde\chi(0,P)=\chi(0,P)\otimes 1.$$
Since $\chi(0,P)=\chi(P)$, we may also say:
$$\tilde\chi(0,P)=\chi(P)\in \mathbb Z=\mathbb Z\otimes \mathbb Z\subset \mathbb R\otimes \mathbb R.
$$
In dimension three $\tilde\chi(1,-)$ is essentially the original Dehn invariant: if $P$ is a $3$-dimensional nonempty convex polytope, then $\tilde\chi(1,P)$ is the unique element of $\mathbb R\otimes \mathbb R$ that maps to the negative of the Dehn invariant of $P$ under the quotient map  $\mathbb R\otimes \mathbb R\to (\mathbb R/\mathbb Q)\otimes\mathbb R$ and maps to $\mathcal V_3(P)$ under the multiplication map $\mathbb R\otimes \mathbb R\to \mathbb R$.

Similar definitions in the conical case give:
$$
\tilde T^{x,y}=((x\bullet \mathcal W)\otimes 1)\star (1\otimes (y\bullet \mathcal U))
$$
and
 $$
\tilde T^{x,y}_n(P)=\sum_{i+j=n}\  \tilde {\mathcal W}(j,P)x^iy^j,
$$
 where 
 $$
 \tilde {\mathcal W}(j,P)=\sum_{|\sigma|=j} \mathcal W(\nu(\sigma,P))\otimes \mathcal U_j(\sigma)\in \mathbb R\otimes\mathbb R.
 $$
The multiplication map $\mathbb R\otimes\mathbb R\to\mathbb R$ takes $\tilde T^{x,y}$ to $T^{x,y}$. That is, it takes $ \tilde {\mathcal W}(j,P)$ to the intrinsic conical volume $\mathcal W(j,P)$ . 
 
If $P$ is $d$-dimensional then $ \tilde {\mathcal W}(j,P)=0$ for $j>d$, and
$$
 \tilde {\mathcal W}(d,P)=1\otimes \mathcal W(d,P)=1\otimes \mathcal U_d(P)
$$$$
 \tilde {\mathcal W}(d-1,P)=1\otimes \mathcal W(d-1,P).
$$
We always have
$$ \tilde {\mathcal W}(0,P)=\mathcal W(0,P)\otimes 1=\mathcal W(P)\otimes 1.
$$
The inverse of $\tilde T^{x,y}$ is related to $\tilde T^{y,x}$ by the map $a\otimes b\mapsto b\otimes a$ from $\mathbb R\otimes \mathbb R$ to itself. Therefore if $DP$ is the dual of the convex cone $P$ in $\mathbb R^d$ then $ \tilde {\mathcal W}(j,DP)$ and $ \tilde {\mathcal W}(d-j,P)$ are related by that map.
Thus in three dimensions in addition to
$$
 \tilde {\mathcal W}(3,P)=1\otimes \mathcal W(3,P),
$$ 
$$
\tilde {\mathcal W}(2,P)=1\otimes \mathcal W(2,P),
$$
and
$$
\tilde {\mathcal W}(0,P)=\mathcal W(0,P)\otimes 1
$$
we have
$$
 \tilde {\mathcal W}(1,P)=\mathcal W(2,DP)\otimes 1=\mathcal W(1,P)\otimes 1
$$
The first case in which $\tilde{\mathcal W}(j,-)$ takes values in neither $\mathbb R\otimes \mathbb Q$ nor $\mathbb Q\otimes \mathbb R$ is $ \tilde {\mathcal W}(2,P)$ for $4$-dimensional $P$; this is essentially the usual $3$-dimensional spherical Dehn invariant.

\begin{example}\label{K}
A related construction is $K=(\mathcal U\otimes 1)\star (1\otimes \mathcal W).$ This is an $(\mathbb R\otimes \mathbb R)$-valued morphism from $0$ to $0$. The multiplication map $\mathbb R\otimes \mathbb R\to \mathbb R$ sends it to $\mathcal U\star\mathcal W=0=1_0$. It is related to its inverse by the ring map $a\otimes b\mapsto b\otimes a$. If $\sigma_\theta$ is a conical $2$-simplex with angle $\theta$ then $K_2(\sigma_\theta)= \frac{\theta}{2\pi}\otimes 1-1\otimes \frac{\theta}{2\pi}$.
\end{example}

\subsection{Higher Dehn invariants}\label{total dehn}

Write
$$
T^{x,y,z}=(T^{x,y}\otimes 1)\star  (1\otimes T^{y,z}).
$$
This is the $\star$-product of two morphisms with values in
$(\mathbb R\otimes\mathbb R)\lbrack x,y,z\rbrack$, one from $z$ to $y$ and one from $y$ to $x$. 
Note that $T^{x,0,y}=\tilde T^{x,y}$, and that $T^{0,1,0}=K$. We have also the identity 
$$
T^{x,y,z}=(x\bullet (\mathcal W\otimes 1))\star (y\bullet K)\star (z\bullet (1\otimes U)).
$$ 

Likewise define
$$
T^{x,y,z,w}=(T^{x,y}\otimes 1\otimes 1)\star  (1\otimes T^{y,z}\otimes 1)\star (1\otimes 1\otimes T^{z,w}).
$$
This takes values in $(\mathbb R\otimes\mathbb R\otimes \mathbb R)\lbrack x,y,z,w\rbrack$. It may be written
$$
T^{x,y,z,w}=(x\bullet (\mathcal W\otimes 1\otimes 1))\star (y\bullet (K\otimes 1))\star (z\bullet (1\otimes K))\star (w\bullet (1\otimes 1\otimes U)).
$$
In general we obtain an $(\mathbb R^{\otimes r})\lbrack x_0,\dots ,x_r\rbrack$-valued morphism $T^{x_0,\dots ,x_r}$ with source $x_r$ and target $x_0$. 
The correct convention when $r$ is zero is $T^x=x\bullet\epsilon=1_x$. That is, $T^x$ is the identity morphism of $x$ in $\mathcal G_{\mathbb Z\lbrack x\rbrack}$.

Specializing the indeterminates to elements of a ring $k$, we may also regard $T^{x_0,\dots ,x_r}$ as denoting a $k$-valued morphism, but only when $k$ is also equipped with a ring map from $\mathbb R^{\otimes r}$, that is, an ordered $r$-tuple of $\mathbb R$-algebra structures.

Every Euclidean morphism that can be made from $\mathcal U$ by a combination of scaling, groupoid composition, groupoid inversion, and precomposing an invariant with a ring map can be described as a specialization of one of these invariants $T^{x_0,\dots ,x_r}$ in this way. A central question is whether such invariants are enough to detect everything: if $P$ and $Q$ are cones in $V$ such that $T^{x_0,\dots,x_r}_n(P)=T^{x_0,\dots,x_r}_n(Q)$ for all $r$, does it follow that $F(P)=F(Q)$ for every Euclidean valuation $F$ of cones in $V$?

There are relations between these invariants:

Repeating an indeterminate corresponds to inserting a $1$; for example, $T^{x,y,y,z}$ is obtained from $T^{x,y,z}$ by the ring map $\xi\otimes \eta\mapsto \xi\otimes 1\otimes \eta$. Another way to say this is that setting two consecutive indeterminates equal has the effect of making one of the $\mathbb R$-algebra structures irrelevant.

Contracting two consecutive copies of $\mathbb R$ (that is, identifying two of the $\mathbb R$-algebra structures) has the effect of deleting an indeterminate; for example, the ring map $\xi\otimes \eta\otimes \zeta\mapsto \xi\eta\otimes \zeta$ takes $T^{x,y,z,w}$ to $T^{x,z,w}$.

The fact that $T^{1,-1}=e$ is $\mathbb Z$-valued leads to more relations. Since $T^{x,-x}$ takes values in $\mathbb Z\lbrack x\rbrack$, not merely in $\mathbb R\lbrack x\rbrack$, $T^{x,-x}\otimes 1$ is the same as $1\otimes T^{x,-x}$. Thus, for example, 
since
$$
(T^{x,y}\otimes 1)\star (T^{y,-y}\otimes 1)\star (1\otimes T^{-y,z})
=(T^{x,y}\otimes 1)\star (1\otimes T^{y,-y})\star (1\otimes T^{-y,z}),
$$
we have
$$
T^{x,-y,z}=T^{x,y,z}.
$$
More generally, the polynomial $T^{x_0,\dots ,x_r}$ is even with respect to all variables $x_i$ with $0<i<r$. 

Using these relations, one can work out that when $P$ is a cone in $\mathbb R^n$ then the elements $T_n^{x_0,\dots ,x_r}(P)$ for all $r\ge 0$ are determined by those for which $2r\le n$.

Similarly there are polytope invariants $S^{x_0,\dots ,x_r}$, with values in $(\mathbb R^{\otimes r})\lbrack x_0,\dots ,x_r\rbrack$. For example
$$
S^{x,y,z,w}=(T^{x,y}\otimes 1\otimes 1)\star  (1\otimes T^{y,z}\otimes 1)\star (1\otimes 1\otimes S^{z,w}).
$$
$S^{x,y}$ is as before. $S^{x,0,y}=\tilde S^{x,y}$. There is no $S^x$.

Every multiplicative Euclidean polytope invariant that can be made from $\mathcal V$ by a combination of scaling, transport by $\mathcal U$ or its inverse, and precomposing an invariant with a ring map can be described as a specialization of one of these invariants $S^{x_0,\dots ,x_r}$ in this way. Again we can ask whether such invariants are enough to detect everything.

There are relations among these similar to the relations among the $T^{x_0,\dots,x_r}$.

\section{The graded rings $E$ and $L$}

The sets $\mathcal O_k$, $\mathcal G_k$, and $\mathcal I_k$ depend functorially on the ring $k$; we have already made informal use of this. Moreover, the following maps defined in \S\ref{star} are natural: the multiplication $\mathcal O_k\times \mathcal O_k\to\mathcal O_k$; the source, target, identity, and location maps \newline
$s:\mathcal G_k\to \mathcal O_k$, $t:\mathcal G_k\to \mathcal O_k$, $1:\mathcal O_k\to \mathcal G_k$, $p:\mathcal I_k\to \mathcal O_k$; the two $\star$-products
$$
\mathcal G_k\times _{\mathcal O_k}\mathcal G_k\to \mathcal G_k
$$$$
\mathcal G_k\times _{\mathcal O_k}\mathcal I_k\to \mathcal I_k;
$$
the inversion map $\mathcal G_k\to \mathcal G_k$; and the scaling maps $\mathcal O_k\times \mathcal G_k\to \mathcal G_k$ and $\mathcal O_k\times \mathcal I_k\to \mathcal I_k$.

It is clear that the three functors are representable. For example, to construct a ring $R$ such that there is a natural bijection between ring maps $R\to k$ and elements of $\mathcal G_k$ one may take the abelian group $\bigoplus_V \Sigma(V)$, form its symmetric algebra over $\mathbb Z$, and let $R$ be the quotient of this by an ideal that enforces multiplicativity. 

In algebraic geometry language, we may say that $\mathcal O$ is an affine monoid scheme, that $\mathcal G$ is an affine groupoid scheme with an action of $\mathcal O$, and so on.

We will hardly use this language again. However, in the Euclidean-invariant case we will find it useful to work directly with the representing rings. We now introduce notation for those.

\subsection{The ring $E$}

We describe a graded ring $E=\bigoplus_n E_n$ such that (Proposition \ref{E rep}) elements of $\mathcal I^O_k$ correspond precisely to ring maps $E\to k$.

For each $n\ge 0$, let $E_n$ be the group of coinvariants for the action of the orthogonal group $O_n$ on McMullen's group $\Pi(\mathbb R^n)$ (equivalently, the group of coinvariants for the action of affine isometries on the polytope group $\mathcal P(\mathbb R^n)$). Thus for any abelian group $A$ the group homomorphisms $E_n\to A$ are in natural bijection with the 
$A$-valued Euclidean-invariant valuations for polytopes in $\mathbb R^n$. 

Denote the universal such valuation by $P\mapsto \lbrack P\rbrack_n\in E_n$; in other words, let $ \lbrack P\rbrack_n$ be the image in $E_n$ of $(P)\in \mathcal P(\mathbb R^n)$. We write $\lbrack P\rbrack_n$ more generally when $P$ is a Euclidean polytope (or a constructible subset of one) having dimension at most $n$, either in an arbitrary Euclidean space or in the abstract sense of Remark \ref{abstract}. 
\begin{rem}\label{}
There are many other descriptions of $E_n$. For example, if $d\ge n$ then $E_n$ is the group of coinvariants for the action of Euclidean isometries of $\mathbb R^d$ on $\mathcal P_n(\mathbb R^d)$. It is also the colimit, over the category of Euclidean polytopes of dimension $\le n$ and isometric PL embeddings, of the functor $X\mapsto \mathcal P(X)$. It is also the colimit, over the category of all Euclidean polytopes and isometric PL embeddings, of $X\mapsto \mathcal P_n(X)$.
\end{rem}
The groups $E_n$ form a graded ring $E$ in which multiplication is given by 
$$
\lbrack P\rbrack_i\lbrack Q\rbrack_j=\lbrack P\times Q\rbrack_{i+j}.
$$
$E_0$ is isomorphic (as a ring) to $\mathbb Z$, with the element $1\in E_0$ being given by a point. 

The following is clear:
\begin{prop}\label{E rep}
For every element $F=\lbrace F_n\rbrace\in \mathcal I^O_k$ there is a unique ring map $F:E\to k$ such that for every Euclidean polytope $P$ of dimension $\le n$
$$F_n(P)=F(\lbrack P\rbrack_n).
$$
\end{prop}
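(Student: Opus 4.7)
The proof is essentially a translation between the two universal properties at play, and I would proceed degree by degree, then verify the multiplicative structure matches.

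First, for each $n \ge 0$, observe that $F_n$ is a $k$-valued valuation of polytopes in $\mathbb{R}^n$. By the universal property of $\mathcal{P}(\mathbb{R}^n)$, this yields a group homomorphism $\mathcal{P}(\mathbb{R}^n) \to k$ sending $(P) \mapsto F_n(P)$. Because $F_n$ is invariant under affine isometries of $\mathbb{R}^n$ (as part of being in $\mathcal{I}^O_k$), this homomorphism factors through the group of coinvariants for that isometry action, which is $E_n$ by definition. Assembling these factorizations across all $n$ gives a homomorphism of graded abelian groups $F \colon E \to k$ with $F(\lbrack P\rbrack_n) = F_n(P)$ whenever $P \subset \mathbb{R}^n$.

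Next I would verify ring structure. The multiplication in $E$ is generated by the relation $\lbrack P\rbrack_i \lbrack Q\rbrack_j = \lbrack P \times Q\rbrack_{i+j}$, so it suffices to compute
$$
F(\lbrack P\rbrack_i \lbrack Q\rbrack_j) = F(\lbrack P \times Q\rbrack_{i+j}) = F_{i+j}(P \times Q) = F_i(P) \, F_j(Q) = F(\lbrack P\rbrack_i) F(\lbrack Q\rbrack_j),
$$
where the middle equality is exactly the multiplicativity axiom $F_{V \times W}(P \times Q) = F_V(P)F_W(Q)$ in the definition of $\mathcal{I}^O_k$. The unit condition $F(1) = 1$ follows from $F_0(0) = 1$. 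Uniqueness is immediate from the fact that $E$ is generated as an abelian group by the elements $\lbrack P\rbrack_n$.

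Finally, for the stated identity $F_n(P) = F(\lbrack P\rbrack_n)$ applied to abstract Euclidean polytopes $P$ of dimension $\le n$ (not merely to polytopes in $\mathbb{R}^n$), I would invoke the extension procedure described in Remark \ref{abstract}: any isometry-invariant valuation on polytopes in $\mathbb{R}^n$ extends uniquely to abstract Euclidean polytopes of dimension at most $n$, and the element $\lbrack P\rbrack_n \in E_n$ is defined by the same extension. Both sides of the equation are then defined by the same universal procedure and agree tautologically.

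There is no genuine obstacle here; the statement is essentially a formal consequence of the construction of $E$ by coinvariants together with the definition of $\mathcal{I}^O_k$. The only item that merits any care is matching the internal product on $E$ against the external multiplicativity axiom satisfied by $F$, which is accomplished by the single identity $\lbrack P\rbrack_i \lbrack Q\rbrack_j = \lbrack P \times Q\rbrack_{i+j}$.
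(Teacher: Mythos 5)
Your proof is correct and is precisely the verification that the paper elides: the paper states this proposition with the single remark ``The following is clear,'' and your degree-by-degree translation through the universal properties (valuation $\Rightarrow$ map out of $\mathcal P(\mathbb R^n)$, isometry-invariance $\Rightarrow$ factorization through the coinvariants $E_n$, multiplicativity $\Rightarrow$ ring map, $F_0(0)=1$ $\Rightarrow$ unitality, generation by $\lbrack P\rbrack_n$ $\Rightarrow$ uniqueness) is exactly the intended argument.
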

Note that we use the same name for the invariant and the corresponding ring map.

We show that the group $E_1$ is isomorphic to $\mathbb Z\times \mathbb R$. Let $p\in E_1$ be the element given by a point. Let $p+q(\lambda)$ be the element given by a closed line segment of length $\lambda$; that is, let $q(\lambda)$ be the element given by a half-open segment of that length. Since $q(\lambda+\mu)=q(\lambda)+q(\mu)$, this gives a group homomorphism $q:\mathbb R\to E_1$. Every element of $E_1$ is uniquely of the form $kp+q(\lambda)$, $k\in\mathbb Z$, $\lambda\in \mathbb R$. In fact, $E_1$ is generated by points and closed line segments, and the uniqueness holds because the maps $\chi:E_1\to\mathbb Z$ and $\mathcal V_1:E_1\to \mathbb R$ take $kp+q(\lambda)$ to $k$ and $\lambda$ respectively:
$$
\chi(p)=1 \hskip .6 in \chi(q(\lambda)=0
$$$$
\mathcal V(p)=0 \hskip .6 in\mathcal V(q(\lambda)=\lambda.
$$

Multiplication by $p$ in the graded ring $E$ is the same as the ``inclusion'' map from $E_n$ to $E_{n+1}$
$$
\lbrack P\rbrack_n\mapsto \lbrack P\rbrack_{n+1}.
$$
The quotation marks will become less important when we see in \S \ref{E split} that this map is an injection, in other words that $p$ is not a zero-divisor in $E$. 

Each group $E_n$ inherits an involution $I$ from $\mathcal P(\mathbb R^n)$, and the total involution \newline
$I:E\to E$ is a graded ring map. It follows from (\ref{intM}) that if $M$ is an $m$-dimensional manifold (and a Euclidean polytope) with $m\le n$ then
$$
I\lbrack M\rbrack_n=(-1)^m\lbrack \text{int}M\rbrack_n.
$$
It follows from (\ref{IVtriang}) that if $P$ is any Euclidean polytope of dimension at most $n$ then
$$
I\lbrack P\rbrack_n=\sum_\sigma\ (-1)^{|\sigma|}\lbrack\sigma\rbrack_n
$$
for any cell structure on $P$.

We record the action of $I$ on named elements of $E$:
$$I(p)=p  \hskip .5 inI(q(\lambda))=-q(\lambda).
$$

The dimension filtration that $E_n$ inherits from $\mathcal P(\mathbb R^n)$ is
$$
p^nE_0\subset p^{n-1}E_1\subset \dots \subset pE_{n-1}\subset E_n.
$$
Denote the top quotient $E_n/pE_{n-1}$ by $\hat E_n$. This is the $n$-dimensional Euclidean scissors congruence group. By Proposition \ref{topI} the involution of $\hat E_n$ induced by $I$ is multiplication by $(-1)^n$.

Note that when a ring map $F:E\to k$ is interpreted as an element of $\mathcal I^O_k$ then its location is $F(p)$.

The groups $\hat E_n$ form a graded ring $\hat E=E/pE$. Ring maps $F:\hat E\to k$ correspond to those elements of $\mathcal I^O_k$ which are located at $0$, i.\ e.\ those multiplicative Euclidean invariants $F=\lbrace F_n\rbrace$ such that $F_n$ vanishes on polytopes of dimension $<n$.

Another quotient of $E$ is the ungraded ring $E_\infty=E/(1-p)E$. As a group, $E_\infty$ can be described as the direct limit of $E_n$ with respect to inclusion. Ring maps $F:E_\infty\to k$ correspond to those elements of $\mathcal I^O_k$ which are located at $1$, i.\ e.\ those multiplicative Euclidean invariants $F=\lbrace F_n\rbrace$ which are absolute (those for which $F_n(P)$ is independent of $n$).

The group $E_2$ can be described using the classical and elementary fact (\cite{Du}, p.\ 3) that two polygons of equal area in the plane are scissors congruent. This means that $\hat E_2$ is generated by rectangles of height $1$, so that the map $\mathbb R\to E_2$ given by
$$
\mu\mapsto q(\mu)q(1)
$$
is surjective modulo $pE_1$. It follows that the map $\mathbb Z\times \mathbb R\times \mathbb R\to E_2$ given by
$$
(k,\lambda,\mu)\mapsto kp^2+pq(\lambda)+q(\mu)q(1)
$$
is surjective. This last map is in fact an isomorphism, because a left inverse is provided by the homomorphisms 
$$
\chi:E_2\to \mathbb Z
$$$$
\chi(1,-):E_2\to \mathbb R
$$$$
\mathcal V_2:E_2\to \mathbb R.
$$
(Here $\chi(1,-)$ is the intrinsic $1$-dimensional volume introduced in \ref{}, which takes a $2$-dimensional convex polygon to one half of its perimeter.)

\subsubsection{Splitting the dimension filtration}\label{E split}
In discussing $E_0$, $E_1$, and $E_2$ we showed how to split the ``inclusion map'' $E_{n-1}\to E_{n}$ for $n\le 2$ using absolute volume invariants. The same method succeeds for $n=3$. To split it for all $n$ we can use the method of \S\ref{M split}.

In fact, the desired splitting follows from McMullen's isomorphism (\ref{Msplitting}); the latter is compatible with the canonical $O_n$-actions, and upon passing to coinvariants it yields an isomorphism
$$ 
E_n\cong\bigoplus_{0\le j\le n}\hat E_j.
$$
It is worth examining this splitting directly in the Euclidean-invariant context.

Let $\tau\in \mathcal I^O_{\hat E}$ be given by the quotient ring map $E\to \hat E$. Thus for a polytope $P\subset \mathbb R^n$ the element $\tau_n(P)\in \hat E_n$ is the Euclidean scissors congruence class of $P$. (This is the universal example of a multiplicative Euclidean polytope invariant located at $0$.) The groups $\hat E_j$, $j>0$, have real vector space structures (inherited from the groups $\hat \Pi(\mathbb R^j)$) such that the multiplication $\hat E_i\times \hat E_j\to \hat E_{i+j}$ is $\mathbb R$-bilinear. Thus the ring $\hat E$ becomes an $\mathbb R$-algebra $\hat E_{\mathbb R}$ when $\hat E_0\cong \mathbb Z$ is replaced by $\mathbb R$. Now the $\star$-product $(x\bullet \mathcal W)\star\tau\in \mathcal I^O_{\hat E_{\mathbb R}\lbrack x\rbrack}$ is defined (by regarding both $x\bullet \mathcal W\in \mathcal I^O_{\mathbb R\lbrack x\rbrack}$ and $\tau\in \mathcal I^O_{\hat E}$ as taking values in $\hat E_{\mathbb R}\lbrack x\rbrack$), and the resulting ring map $E\to \hat E_{\mathbb R}\lbrack x\rbrack$ goes into $\hat E\lbrack x\rbrack$. It becomes a graded ring map if $\hat E$ is graded in the usual way and $x$ is assigned degree one.

Explicitly, the coefficient of $x^{n-i}$ in the polynomial $((x\bullet \mathcal W)\star\tau)\lbrack P\rbrack_n$ is the following element of $\hat E_i$:
$$
\alpha_i(P)=\sum_{|\sigma|=i}\ \mathcal W(\nu(\sigma,P))\tau_i(\sigma),
$$
with summation over the $i$-simplices of a triangulation of $P$. In the exceptional case $i=0$ it is 
$$
\alpha_0(P)=\sum_{|\sigma|=0}\ \mathcal W(\nu(\sigma,P))=\chi(P)\in \hat E_0=\mathbb Z.
$$ 

Note that the element $p$ of $E$ is mapped to the element $x$ of $\hat E\lbrack x\rbrack$.
\begin{prop}\label{alpha}
This graded ring map $E\to \hat E\lbrack x\rbrack$ is an isomorphism.
\end{prop}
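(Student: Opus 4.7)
The plan is to deduce this by descending McMullen's splitting theorem of \S\ref{M split} from the translational to the Euclidean setting, via orthogonal-group coinvariants. By Proposition \ref{E rep} the map in question is the ring homomorphism $\phi:E\to\hat E[x]$ corresponding to the multiplicative polytope invariant $(x\bullet\mathcal W)\star\tau$; in particular $\phi(p)=x$, and its degree-$n$ component $\phi_n$ is computed by the same cell-sum formula $\sum_\sigma \mathcal W(\nu(\sigma,P))\,\tau_{|\sigma|}(\sigma)$ as the translational splitting $\Phi_n:\Pi(\mathbb R^n)\to \bigoplus_{W\subset\mathbb R^n}\hat\Pi(W)$ of \S\ref{M split}. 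I would begin by verifying that the componentwise quotient $\hat\Pi(W)\to \hat E_{\dim W}$ intertwines the two formulas, so that $\phi_n$ factors through $\Phi_n$ followed by this quotient.

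Next I would observe that $\Phi_n$ is $O_n$-equivariant: each $g\in O_n$ carries the summand $\hat\Pi(W)$ isomorphically onto $\hat\Pi(gW)$, and the cell-sum formula respects this action. Since $\Phi_n$ is an isomorphism by the main result of \S\ref{M split} and coinvariants preserve isomorphisms, the induced map on $O_n$-coinvariants is also an isomorphism.

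Then I would compute the $O_n$-coinvariants of both sides. The left side is $E_n$ by definition. On the right, $O_n$ acts transitively on the Grassmannian of $j$-dimensional subspaces of $\mathbb R^n$ with stabilizer $O_j\times O_{n-j}$, the second factor acting trivially on $\hat\Pi(\mathbb R^j)$ since it lives on the orthogonal complement. Hence
$$
\Bigl(\bigoplus_{\dim W=j}\hat\Pi(W)\Bigr)_{\!O_n}\;\cong\;\bigl(\hat\Pi(\mathbb R^j)\bigr)_{O_j}\;=\;\hat E_j,
$$
and summing over $0\le j\le n$ gives $\bigoplus_{j\le n}\hat E_j=\hat E[x]_n$. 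Together with the previous step this identifies $\phi_n$ with an isomorphism of abelian groups for every $n$; since $\phi$ is a graded ring homomorphism by construction, the proof is then complete.

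The main obstacle I expect is the careful bookkeeping in Step 1 --- matching up the indeterminate $x$ (which in $\hat E[x]_n$ tracks codimension $n-j$ via the power $x^{n-j}$) with the McMullen summand $\hat\Pi(W)$ of $\dim W=j$, and verifying that the exceptional $j=0$ term behaves properly, i.e.\ that $\tau$ sends a point to $1\in\hat E_0=\mathbb Z$ so that $\alpha_0(P)=\chi(P)$ as already observed in \S\ref{E split}. Everything else is essentially a black-box invocation of the content of \S\ref{M split}, the real work having already been done there.
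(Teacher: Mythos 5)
Your proof is correct, and it takes a genuinely different route from the one the paper actually writes out. In fact the paper acknowledges your approach in the sentence immediately preceding the Proposition---it remarks that the splitting follows from McMullen's isomorphism (\ref{Msplitting}) by passing to $O_n$-coinvariants---but then says ``it is worth examining this splitting directly in the Euclidean-invariant context'' and gives a self-contained argument instead. That argument is a short induction on $n$: the $\alpha_i$ constitute a map of direct systems from $E_0\to E_1\to\cdots$ (arrows given by multiplication by $p$) to $\hat E_0\to\hat E_0\oplus\hat E_1\to\cdots$ (inclusions of summands); the base case is trivial; and the inductive step closes because the induced map on $n$th cokernels is $\alpha_n:\hat E_n\to\hat E_n$, which one checks directly is the identity. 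This argument never invokes McMullen's theorem that $\Phi_n$ is an isomorphism; it uses \cite{McM1} only through the real vector space structure on $\hat E_j$ that is needed to make sense of the $\star$-product at all. Your coinvariants argument, by contrast, imports the full strength of McMullen's splitting theorem, and in return gives the conceptual explanation that the Euclidean splitting is literally the $O_n$-coinvariants of the translational one. Your identification $\bigl(\bigoplus_{\dim W=j}\hat\Pi(W)\bigr)_{O_n}\cong\hat\Pi(\mathbb R^j)_{O_j}=\hat E_j$ via the transitive action on $j$-planes with stabilizer $O_j\times O_{n-j}$ (second factor acting trivially) is a correct instance of $(\mathrm{Ind}_H^G M)_G\cong M_H$, and the bookkeeping obstacle you flag is genuine but unproblematic, since the $\star$-formula defining the map of Proposition \ref{alpha} is literally the same cell sum as McMullen's $\Phi_n$, with each $\hat\Pi(W)$-component pushed forward to $\hat E_{\dim W}$. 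Both proofs work; the paper's is shorter and more self-contained, while yours makes explicit the descent from \S\ref{M split} that the paper only sketches.
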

\begin{proof}
This amounts to the assertion that for every $n$ the map $E_n\to \bigoplus_{0\le i\le n}\hat E_i$ given by
$$
\xi\mapsto \sum_{i\le n}\alpha_i(\xi)
$$
is a group isomorphism. To verify this assertion, note that these maps constitute a map of direct limit systems from the diagram of ``inclusion maps'' 
$$
E_0\to E_1\to E_2\to \dots
$$
to the diagram of inclusion maps
$$
\hat E_0\to \hat E_0\oplus \hat E_1\to \hat E_0\oplus \hat E_1\oplus \hat E_2\to\dots .
$$
This map is an isomorphism at the $n$th level by induction on $n$, because the induced map from the cokernel of $E_{n-1}\to E_n$ to the cokernel below is the obvious isomorphism. (That is: when $n=i$, the map $\alpha_i=\alpha_n:E_n\to \hat E_n$ is the quotient map.)
\end{proof}
Note that $\alpha_i$ is an absolute Euclidean-invariant valuation with values in $\hat E_i$. The element $\alpha_i(P)\in \hat E_i$ may be thought of as the purely $i$-dimensional part of the polytope $P$.

Passing to the direct limit, we obtain a group isomorphism $E_\infty\to \hat E$. It is the same as the ring isomorphism obtained from the isomorphism $E\to \hat E\lbrack x\rbrack$ by setting $p=1$ in one ring and $x=1$ in the other.
\begin{rem}
By Proposition \ref{ep} and the equation $\mathcal W=e\star -\bullet \mathcal W$ we have
$$
(x\bullet \mathcal W)\star \tau=(x\bullet (e\star -\bullet \mathcal W))\star \tau= (x\bullet e)\star(-x\bullet \mathcal W)\star\tau=((x\bullet \mathcal W)\star(-\bullet\tau))I.
$$
Equating coefficients of $x^n$, this says 
\begin{equation}\label {alpha I}
\alpha_n=(-1)^n\alpha_n\circ I.
\end{equation}
\end{rem}

\subsection{The ring $L$}

We describe a graded ring $L=\bigoplus_n L_n$ such that elements of $\mathcal G^O_k$ correspond precisely to ring maps $L\to k$.

Let $L_n$ be the group of coinvariants for the action of the orthogonal group $O_n$ on the cone group $\Sigma(\mathbb R^n)$. Thus for any abelian group $A$ the homomorphisms $L_n\to A$ are in natural bijection with the $A$-valued Euclidean-invariant valuations of cones in $\mathbb R^n$. 

Denote the universal such valuation by $P\mapsto \langle P\rangle_n\in L_n$; in other words let $\langle P\rangle_n$ be the image in $L_n$ of $(P)\in \Sigma(\mathbb R^n)$. This extends from cones in $\mathbb R^n$ to Euclidean cones (or conical constructible subsets thereof) having dimension at most $n$, either cones in a Euclidean vector space or Euclidean cones in the abstract sense of Remark \ref{}. 
\begin{rem}\label{other}
There are many other descriptions of $L_n$. For example, if $d\ge n$ then it is the group of coinvariants for the action of  isometries of $O_d$ on $\Sigma_n(\mathbb R^d)$. This will be used below with $n=d+1$. It is also the colimit, over the category of pointed Euclidean polytopes of dimension $\le n$ and germs of isometric PL embeddings, of $(X,x)\mapsto \mathcal P(X,X-x)$.
\end{rem}
These groups form a graded ring $L$ in which multiplication is given by 
$$
\langle P\rangle_i\langle Q\rangle_j=\langle P\times Q\rangle_{i+j}.
$$
As a ring $L_0$ is isomorphic to $\mathbb Z$, the element $1$ being given by a point. 

The following is clear:
\begin{prop}
For every element $F=\lbrace F_n\rbrace\in \mathcal G^O_k$ there is a unique ring map $F:L\to k$ such that for every Euclidean cone $P$ of dimension $\le n$ 
$$F_n(P)=F(\langle P\rangle_n).
$$
\end{prop}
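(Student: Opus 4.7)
The proof plan is essentially parallel to the one for Proposition \ref{E rep}, which the author cites as clear. The argument should proceed by constructing the ring map $F : L \to k$ degree by degree from the universal property defining $L_n$, and then checking that the pieces fit together multiplicatively.

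First, for each $n \ge 0$, I would produce a group homomorphism $L_n \to k$. By definition of $\Sigma(\mathbb R^n)$ as the universal valuation on cones in $\mathbb R^n$, the $k$-valued valuation $F_n$ determines a unique group homomorphism $\Sigma(\mathbb R^n) \to k$ sending $(P) \mapsto F_n(P)$. The hypothesis that $F_n$ is invariant under linear isometries of $\mathbb R^n$ means this homomorphism is $O_n$-invariant, and hence factors uniquely through the coinvariants $L_n = \Sigma(\mathbb R^n)_{O_n}$. Summing over $n$ yields a group homomorphism $F : L \to k$ characterized by $F(\langle P\rangle_n) = F_n(P)$.

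Second, I would verify that $F$ respects the ring structure. Since $L$ is generated as a group by the elements $\langle P\rangle_n$, it suffices to check multiplicativity on these generators: for cones $P \subset \mathbb R^i$ and $Q \subset \mathbb R^j$, the product $\langle P\rangle_i \cdot \langle Q\rangle_j$ is by definition $\langle P \times Q\rangle_{i+j}$, viewing $P \times Q$ as a cone in $\mathbb R^i \times \mathbb R^j = \mathbb R^{i+j}$. Applying $F$ gives $F_{i+j}(P \times Q)$, and this equals $F_i(P) F_j(Q)$ by the multiplicativity axiom built into the definition of $\mathcal G^O_k$. The unit condition $F(1) = 1$ reduces to $F_0(0) = 1$, which is also part of the definition.

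Third, uniqueness is automatic: any ring map $L \to k$ satisfying $F_n(P) = F(\langle P\rangle_n)$ is determined on the generating set $\{\langle P\rangle_n\}$, hence on all of $L$.

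No step here should present real difficulty; the content of the statement is essentially that $L$ has been built precisely so as to corepresent the functor $k \mapsto \mathcal G^O_k$ on objects (ignoring the source/target/composition structure, which is encoded separately as noted in the discussion preceding the definition of $L$). The only point that might deserve a sentence of comment is that multiplicativity in $L$ (defined via the external product of cones) and multiplicativity in $\mathcal G^O_k$ (defined via $F_{p+q}(P \times Q) = F_p(P) F_q(Q)$) are matched by construction, since the external product $\Sigma(\mathbb R^i) \times \Sigma(\mathbb R^j) \to \Sigma(\mathbb R^{i+j})$ descends to coinvariants.
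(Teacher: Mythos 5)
Your argument is correct and is exactly the unpacking of what the paper dismisses as ``clear'': $L_n$ is by construction the universal recipient of $O_n$-invariant valuations of cones in $\mathbb R^n$, multiplicativity of $F$ on generators $\langle P\rangle_n$ matches the multiplicativity axiom in Definition \ref{morphism}, and uniqueness follows since the $\langle P\rangle_n$ generate $L$. No divergence from the paper's (implicit) reasoning.
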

It is easy to describe $L_1$. Up to isometry there are three nonempty cones in the line $\mathbb R^1$: the origin, a closed half-line, and the whole line. Denote the corresponding elements of $L_1$ by $t$, $d$, and $s$. They satisfy $t+s=2d$. A $\mathbb Z$-basis for $L_1$ is given by $t$ and $d$. Another useful $\mathbb Z$-basis is given by $d$ together with the element $d'=d-t=s-d$ given by an open half-line (a conical constructible set). Thus $t=d-d'$ and $s=d+d'$.

Multiplication by $t$ gives the ``inclusion'' map $L_n\to L_{n+1}$ 
$$
\langle P\rangle_n \mapsto \langle P\rangle_{n+1}.
$$
(The quotation marks serve to remind us that it is not known whether $t$ is a zero-divisor in $L$.) The group $L_d$ inherits a dimension filtration from $\Sigma(\mathbb R^d)$, which may be written 
$$
t^dL_0\subset t^{d-1}L_1\subset \dots \subset tL_{d-1}\subset L_d.
$$
Denote the top quotient $L_d/tL_{d-1}$ by $\hat L_d$. For $d\ge 1$ this is the $(d-1)$-dimensional \emph{spherical scissors congruence group}. These form a graded ring $L/tL=\hat L=\bigoplus_d\hat L_d$

The quotient map  $L\to \hat L$ gives an element of $\mathcal G^O_{\hat L}$ that is the universal Euclidean morphism with target $0$. The quotient map  $L\to L/(1-t)L=L_\infty$ gives an element of $\mathcal G^O_{L_\infty}$ that is the universal Euclidean morphism with target $1$.

For every $n> 1$ the composed ``inclusion'' map $L_1\to L_n$ given by multiplication by $t^{n-1}$ is injective. In fact, it is split injective; the $\mathbb Z$-valued invariants $\epsilon$ and $e$ provide a group homomorphism $L_n\to \mathbb Z^2$ such that the composed map $L_1\to \mathbb Z^2$ sends a $\mathbb Z$-basis to a $\mathbb Z$-basis:
$$
t\mapsto (\epsilon(t^n),e(t^n))=(\epsilon(t)^n,e(t)^n)=(1,1)
$$
$$
d\mapsto (\epsilon(t^{n-1}d),e(t^{n-1}d))= (\epsilon(t)^{n-1}\epsilon(d),e(t)^{n-1}e(d))=(1,0).
$$
Thus for $n\ge 1$ the group $L_n$ is the direct sum of two subgroups. One is a free abelian group $t^{n-1}L_1$ generated by the point and half-line elements $t^n$ and $t^{n-1}d$, and the other is $ker(\epsilon)\cap ker(e)$.

In the case of $L_2$ this second summand is isomorphic to $\mathbb R$. We parametrize it as follows. A conical $2$-simplex (sector) with angle $\theta$ gives an element of $L_2$ which we call $td+a(\theta)$. That is, for $0<\theta<\pi$ we define $a(\theta)\in L_2$ to be the difference between this sector and a closed half-line. Since $a(\theta+\phi)=a(\theta)+a(\phi)$, this produces an additive homomorphism $a:\mathbb R\to L_2$. Its image is contained in $ker(\epsilon)\cap ker(e)$. It maps onto the quotient $\hat L_2$ because the latter is generated by conical $2$-simplices. The map $a$ is injective because the conical volume satisfies
$$
\mathcal U(a(\theta))=\frac{\theta}{2\pi}.
$$
Thus
$$
L_2=t^2\mathbb Z\oplus td\mathbb Z\oplus a(\mathbb R)\cong \mathbb Z\oplus \mathbb Z\oplus \mathbb R.
$$
If $P$ is the cone corresponding to the germ of a surface (with Euclidean structure) at a point where the total angle is $\theta$, then $\langle P\rangle_2=t^2+a(\theta)$. 

In general the summand $ker(\epsilon)\cap ker(e)$ of $L_n$ is $2$-divisible. To see this, view it as the cokernel of ``inclusion'' $L_1\to L_n$. The result follows by induction from the fact (\cite{Du} p.\ 10) that for $n\ge 2$ the group $\hat L_n=L_n/tL_{n-1}$ is $2$-divisible. (Note that these $2$-divisible groups are not in general known to be uniquely $2$-divisible.) As a result we can describe the graded ring $L/2L=L\otimes (\mathbb Z/2\mathbb Z)$: 
\begin{prop}\label{L/2}
For $n>0$ a $\mathbb Z/2\mathbb Z$-basis for $L_n/2L_n$ is given by the point and the half-line. As a ring $L/2L$ is generated by $t$ and $d$ subject  to the relations $td=d^2$ and $2=0$.
\end{prop}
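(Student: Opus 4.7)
The plan is to leverage the decomposition of $L_n$ established in the paragraphs immediately preceding the proposition. For $n \geq 1$, we have $L_n = t^{n-1}L_1 \oplus (\ker\epsilon \cap \ker e)$, where the first summand is free abelian of rank two on $\{t^n, t^{n-1}d\}$ and the second is $2$-divisible. Reducing modulo $2$ annihilates the second summand, which immediately yields the claimed $\mathbb{Z}/2\mathbb{Z}$-basis $\{t^n, t^{n-1}d\}$ of $L_n/2L_n$ for every $n \geq 1$; the case $n=0$ is trivial since $L_0 \cong \mathbb{Z}$.

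For the ring presentation, first I would verify the key relation $td \equiv d^2 \pmod 2$ in $L_2$. Two routes are available. One is to observe that $d^2$ is a right-angle sector, so by the parametrization $\langle \text{sector of angle } \theta\rangle_2 = td + a(\theta)$ we have $d^2 = td + a(\pi/2)$; since $a$ is additive, $a(\pi/2) = 2a(\pi/4) \in 2L_2$. The other is to check directly that $\epsilon(td - d^2) = 0$ and $e(td - d^2) = 0$ using multiplicativity of $\epsilon, e$ together with $\epsilon(t) = \epsilon(d) = 1$, $e(t) = 1$, and $e(d) = 0$; this places $td - d^2$ in the $2$-divisible subgroup $\ker\epsilon \cap \ker e$.

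With this relation in hand, there is a well-defined surjective graded ring map $\phi \colon (\mathbb{Z}/2\mathbb{Z})[t,d]/(td - d^2) \to L/2L$ sending the indeterminates to $t$ and $d$. To prove $\phi$ is an isomorphism I would match bases degree by degree. In the source, the relation $td = d^2$ gives $d^n = d \cdot d^{n-1} = \ldots = t^{n-1}d$ by an easy induction, so a $\mathbb{Z}/2\mathbb{Z}$-basis of the degree-$n$ piece ($n \geq 1$) is $\{t^n, d^n\}$. Under $\phi$ this maps to $\{t^n, t^{n-1}d\}$, which is exactly the basis of $L_n/2L_n$ identified in the first step. Hence $\phi$ is a graded isomorphism.

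The main obstacle is confirming the modulo-$2$ relation $td = d^2$; everything else is mechanical basis-counting, enabled by the $2$-divisibility fact cited from \cite{Du}. Neither route to the relation is difficult, but it is the one place where we use genuine geometric input beyond the bookkeeping of the dimension filtration.
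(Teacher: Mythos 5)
Your proof is correct and takes the approach the paper clearly intends: the paper states this proposition without a formal proof, preceded by exactly the two facts you use (the splitting $L_n = t^{n-1}L_1 \oplus (\ker\epsilon \cap \ker e)$ and the $2$-divisibility of the second summand), and your verification of $td \equiv d^2 \pmod 2$ via $d^2 = td + a(\pi/2)$ with $a$ additive is the natural way to finish. Both of your routes to the relation are valid, and the basis-matching step that upgrades the relation to a full presentation of $L/2L$ is exactly the mechanical check needed.
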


We record the values of the basic invariants on named elements (some of these have been used already):
$$
\epsilon(t)=1\hskip .2 in\epsilon(d)=1\hskip .2 in\epsilon(s)=1\hskip .2 in\epsilon(d')=0
$$$$
\epsilon(a(\theta))=0
$$
$$
e(t)=1\hskip .2 in e(d)=0 \hskip .2 in e(s)=-1\hskip .2 in e(d')=-1
$$
$$
e(a(\theta))=0
$$
$$
\mathcal U(t)=0\hskip .2 in \mathcal U(d)=1/2 \hskip .2 in \mathcal U(s)=1\hskip .2 in \mathcal U(d')=1/2
$$
$$
\mathcal U(a(\theta))=\frac{\theta}{2\pi}.
$$
$$
\mathcal W(t)=1\hskip .2 in \mathcal W(d)=1/2 \hskip .2 in \mathcal W(s)=0\hskip .2 in \mathcal W(d')=-1/2
$$$$
\mathcal W(a(\theta))=-\frac{\theta}{2\pi}.
$$

$L_n$ inherits an interior operator $I$ and a duality operator $D$ from $\Sigma(\mathbb R^n)$.These are involutions of the graded ring $L$. By Lemma \ref{DI} we have $DIDI\xi=(-1)^d\xi$ for $\xi\in L_d$.

We record the values of the involutions on the named elements of $L_1$ and $L_2$:
$$
It=t\hskip .2 in  Is=-s \hskip .2 in  Id=-d' \hskip .2 in Id'=-d.
$$$$
Ia(\theta)=a(\theta)
$$
$$
Dt=s\hskip .2 in Ds=t\hskip .2 in  Dd=d\hskip .2 in  Dd'=-d'
$$$$
Da(\theta)=-a(\theta).
$$
Note that
$$dd'=d^2-dt=a(\pi/2)$$
$$d(s-t)=2dd'=a(\pi)$$
$$s^2-t^2=(s+t)(s-t)=2d(s-t)=a(2\pi).$$

Each of the rings $\hat L$ and $L_\infty$ inherits an involution $I$ from $L$, because $It=t$. Neither of them inherits a duality involution from $L$, because the element $Dt=s$ does not belong to $tL$ and the element $D(1-t)=1-s$ does not belong to $(1-t)L$. In fact, $D$ produces an isomorphism $\hat L\cong L_\infty$.

The involution $I$ preserves the dimension filtration of $L_n$. The involution $D$ takes the dimension filtration to a different filtration
$$
s^dL_0\subset s^{d-1}L_1\subset \dots \subset sL_{d-1}\subset L_d,
$$
which might be called the suspension filtration. Multiplication by $s$ corresponds to sending a cone $P\subset V$ to the cone $P\times \mathbb R\subset V\times\mathbb R$. The element $s^d\in L_d$ is given by the whole of $\mathbb R^d$.

\subsection{Rigid invariants}

Call a Euclidean invariant of cones (resp. polytopes) \emph{rigid} if it is invariant under all linear (resp. affine) maps, not just the isometries. We may speak of rigid valuations, and we may also speak of rigid multiplicative invariants. 

Of course there are very few rigid invariants. Let us find them all.

The universal rigid valuation for cones in $\mathbb R^n$ (or for all cones of dimension $\le n$) corresponds to the quotient map $L_n\to L^{rig}_n$, where $L_n^{rig}$ is the group of coinvariants for the action of linear automorphisms of $\mathbb R^n$ on $\Sigma(\mathbb R^n)$. The universal rigid valuation for polytopes in $\mathbb R^n$ (or for all polytopes of dimension $\le n$) corresponds to the quotient map $E_n\to E^{rig}_n$, where $E_n^{rig}$ is the group of coinvariants for the action of affine automorphisms of $\mathbb R^n$ on $\mathcal P(\mathbb R^n)$. It is easy to see that these groups form graded rings $L^{rig}=\bigoplus_n L_n^{rig}$ and $E^{rig}=\bigoplus_n E_n^{rig}$, which are the recipients of universal rigid multiplicative invariants.
\begin{prop}\label{rigidE}
For every $n\ge 0$ the group $E_n^{rig}$ is infinite cyclic. More precisely, the universal rigid valuation of polytopes in $\mathbb R^n$ is the Euler characteristic $P\mapsto \chi(P)$. 
\end{prop}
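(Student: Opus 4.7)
The plan is to show that under the affine group action on $\mathbb R^n$, every simplex becomes equivalent to a point, so that $E_n^{rig}$ is cyclic and generated by the class of a point; then observe that $\chi$ detects this generator.

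First I would reduce to simplices. Since $\mathcal P(\mathbb R^n)$ is generated by the elements $(\sigma)$ for simplices $\sigma$ of dimension $\le n$ (Remark \ref{d}), $E_n^{rig}$ is generated by the classes of such simplices. Next, the affine group of $\mathbb R^n$ acts transitively on the set of $k$-simplices in $\mathbb R^n$ for each $k\le n$: any bijection between the vertex sets of two $k$-simplices extends to an affine automorphism of $\mathbb R^n$. Consequently, all $k$-simplices represent a single class $e_k\in E_n^{rig}$, and $E_n^{rig}$ is generated by $e_0,e_1,\dots,e_n$.

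The key step is a subdivision relation showing $e_{k}=e_{k-1}$ for $1\le k\le n$. Let $\sigma$ be a $k$-simplex with vertices $v_0,\dots,v_k$ and let $m$ be the midpoint of $v_0v_1$. Cutting along the hyperplane through $m,v_2,\dots,v_k$ (and any point not in their affine span if $k<n$) decomposes $\sigma$ as the union of the two $k$-simplices $\sigma'=\operatorname{conv}(v_0,m,v_2,\dots,v_k)$ and $\sigma''=\operatorname{conv}(m,v_1,v_2,\dots,v_k)$, with intersection $\tau=\operatorname{conv}(m,v_2,\dots,v_k)$, a $(k-1)$-simplex. In $E_n^{rig}$ this gives $e_k=e_k+e_k-e_{k-1}$, hence $e_{k-1}=e_k$. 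By induction all the $e_k$ are equal, so $E_n^{rig}$ is generated by $e_0=[\ast]$.

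Finally, the Euler characteristic $\chi:\mathcal P(\mathbb R^n)\to\mathbb Z$ is a valuation invariant under all affine automorphisms (indeed under all PL homeomorphisms), so it factors as a surjection $E_n^{rig}\twoheadrightarrow\mathbb Z$ sending $[\ast]$ to $1$. Combined with the previous step this forces $E_n^{rig}\cong\mathbb Z$ with $[\ast]$ as generator, and $\chi$ as the universal rigid valuation. The only real subtlety is verifying that the midpoint subdivision can be arranged so that $\sigma'$, $\sigma''$, and $\tau$ are bona fide simplices of the advertised dimensions (rather than degenerating), which is automatic here since $v_0,\dots,v_k$ are affinely independent and $m$ lies strictly between $v_0$ and $v_1$.
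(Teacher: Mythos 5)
Your proof is correct and takes essentially the same route as the paper's: the paper invokes Remark \ref{d} (that $\mathcal P(\mathbb R^n)$ is generated by $n$-simplices, because any $(k-1)$-simplex is the intersection of two $k$-simplices whose union is a $k$-simplex), observes that rigidity collapses all $n$-simplices to a single generator, and then uses $\chi$ to pin down the group. Your midpoint-subdivision lemma showing $e_{k-1}=e_k$ is exactly the geometric content of Remark \ref{d}, re-derived in place rather than cited, so the arguments are the same in substance.
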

\begin{proof}
$\chi$ is certainly rigid, and the resulting group map $E_n^{rig}\to \mathbb Z$ is certainly surjective for each $n\ge 0$. We must show that it is injective, or equivalently that for any abelian group $A$ the only $A$-valued valuations of polytopes in $\mathbb R^n$ that are rigid are those given by $P\mapsto \chi(P)a$ for some $a\in A$. 

By Remark \ref{d} a valuation $F$ of polytopes in $\mathbb R^n$ is determined by its values on $n$-dimensional simplices. Furthermore, if $F$ is rigid then its values on any two $n$-simplices are equal, so that $F$ is determined by its value on a single $n$-simplex, say $\sigma$. Let $a\in A$ be $F(\sigma)$. Then $F(P)=\chi(P)a$ for every polytope $P\subset \mathbb R^n$ because this is so when $P=\sigma$.
\end{proof}
Proposition \ref{rigidE} implies that $x\bullet \chi$, the scaling of $\chi$ by an indeterminate, gives a ring isomorphism
$$
E^{rig}\cong \mathbb Z\lbrack x\rbrack.
$$
We may also express this as follows:
\begin{prop}
Every rigid multiplicative polytope invariant with values in the ring $k$ is the scaling $c\bullet \chi$ of the Euler characteristic by a unique element $c$ of $k$.
\end{prop}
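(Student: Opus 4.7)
The plan is to reduce the statement to Proposition \ref{rigidE} plus the multiplicativity axiom, exploiting the fact that the graded ring $E^{rig}$ has already been identified (essentially) with $\mathbb Z\lbrack x\rbrack$ via $x\leftrightarrow p=\lbrack \text{pt}\rbrack_1$. So morally, giving a ring map $E^{rig}\to k$ amounts to choosing the image $c$ of the point, which is exactly the data of the scaling $c\bullet \chi$.

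Concretely, I would proceed as follows. Let $F=\lbrace F_n\rbrace$ be a rigid multiplicative polytope invariant with values in $k$. For each $n\ge 0$, the map $F_n$ is an $A$-valued rigid valuation of polytopes in $\mathbb R^n$ (with $A$ the additive group of $k$), so by Proposition \ref{rigidE} there is a unique element $c_n\in k$ such that $F_n(P)=\chi(P)\,c_n$ for every polytope $P\subset \mathbb R^n$. Taking $P$ to be a point gives $F_n(\text{pt})=c_n$, and the normalization $F_0(0)=1$ yields $c_0=1$.

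Next I would use multiplicativity to show $c_n=c^n$ with $c:=c_1=F_1(\text{pt})$. The point in $\mathbb R^{p+q}$ is a product of points in orthogonal subspaces, so the third axiom in the definition of $\mathcal I^O_k$ gives
\[
c_{p+q}=F_{p+q}(\text{pt}\times \text{pt})=F_p(\text{pt})\,F_q(\text{pt})=c_p\,c_q.
\]
A trivial induction using $c_0=1$ then forces $c_n=c^n$. Consequently $F_n(P)=\chi(P)\,c^n=(c\bullet \chi)_n(P)$ for every polytope $P\subset \mathbb R^n$, which is precisely $F=c\bullet \chi$. Uniqueness of $c$ is immediate since $c=F_1(\text{pt})$ is recovered from $F$.

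There is essentially no obstacle here: the hard content is already encapsulated in Proposition \ref{rigidE}, and what remains is only the bookkeeping step of promoting the scalars $c_n\in k$, obtained one dimension at a time, into a single scalar $c$. The only care needed is to be sure that the multiplicativity axiom is applied in the correct form (on an orthogonal product of \emph{points}, not of higher-dimensional polytopes), but that is automatic from the definition of $\mathcal I^O_k$ together with the fact that $c_n$ is determined by its value on a single simplex, here chosen to be a point.
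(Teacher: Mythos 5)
Your proof is correct and takes essentially the same approach as the paper: the paper passes through the ring isomorphism $E^{rig}\cong\mathbb Z[x]$ and observes that ring maps $E^{rig}\to k$ correspond to elements $c\in k$, while you unpack the same content directly at the level of valuations, using Proposition \ref{rigidE} to pin down each $F_n$ up to a scalar $c_n$ and then using multiplicativity to force $c_n=c^n$. The two arguments are the same in substance; yours simply makes explicit the bookkeeping that the paper leaves implicit in the phrase ``we may also express this as follows.''
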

Turning to the conical case, we have:
\begin{prop}
For every $n\ge 1$ the group $L_n^{rig}$ is free abelian of rank two. More precisely, the universal rigid valuation of cones in $\mathbb R^n$ is $P\mapsto (\epsilon(P),e(P))\in \mathbb Z\times \mathbb Z$. The group $L_0^{rig}$ is infinite cyclic, and the universal rigid valuation of cones in $\mathbb R^0$ is $P\mapsto \epsilon(P)=e(P)\in \mathbb Z.$
\end{prop}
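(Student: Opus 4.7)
The plan mirrors the proof of Proposition \ref{rigidE}. Both $\epsilon$ and $e$ are rigid: $\epsilon(P)$ depends only on whether $0\in P$, a condition preserved by any linear automorphism of $\mathbb R^n$, while $e(P)=\chi(P,P-0)$ is topologically invariant under any self-homeomorphism of $\mathbb R^n$ that fixes the origin. Hence $(\epsilon,e)$ descends to a homomorphism $L_n^{rig}\to\mathbb Z\times\mathbb Z$, and it suffices to show that this homomorphism is an isomorphism (and, for $n=0$, that $\epsilon=e$ gives an isomorphism to $\mathbb Z$).

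First I would show that $L_n^{rig}$ has at most two generators when $n\ge 1$, and at most one when $n=0$. By Remark \ref{0d}, $\Sigma(\mathbb R^n)$ is generated by the classes of conical $n$-simplices together with the class of the conical $0$-simplex (when $n\ge 1$); for $n=0$ only the $0$-simplex is present. Since $GL_n(\mathbb R)$ acts transitively on ordered bases of $\mathbb R^n$, it acts transitively on the set of conical $n$-simplices, so in $L_n^{rig}$ all their classes coincide. Writing $\alpha$ for the class of the origin and, if $n\ge 1$, $\beta$ for the class of any conical $n$-simplex, I conclude that $\alpha$ alone generates $L_0^{rig}$ and that $\alpha,\beta$ together generate $L_n^{rig}$ when $n\ge 1$.

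Next I would compute values. Viewing the origin as the zero vector subspace of $\mathbb R^n$ gives $\epsilon(\alpha)=1$ and $e(\alpha)=(-1)^0=1$. For $n\ge 1$ and a conical $n$-simplex $\sigma$, $\epsilon(\beta)=1$ and $e(\beta)=0$, since $\sigma$ is a convex cone that is not a vector subspace. Thus for $n\ge 1$ the map $(\epsilon,e):L_n^{rig}\to\mathbb Z^2$ sends the pair $(\alpha,\beta)$ to $((1,1),(1,0))$, a $\mathbb Z$-basis of $\mathbb Z^2$. The composite $\mathbb Z^2\twoheadrightarrow L_n^{rig}\xrightarrow{(\epsilon,e)}\mathbb Z^2$, with the first map sending the standard generators to $\alpha$ and $\beta$, is therefore the automorphism with matrix $\bigl(\begin{smallmatrix}1&1\\1&0\end{smallmatrix}\bigr)$. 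This forces both arrows to be isomorphisms, identifying $\{\alpha,\beta\}$ as a $\mathbb Z$-basis of $L_n^{rig}$. For $n=0$, the identity $\epsilon(\alpha)=1$ exhibits a surjection of the cyclic group $L_0^{rig}$ onto $\mathbb Z$, hence an isomorphism.

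The only real obstacle is the reduction to two generators, which is immediate from Remark \ref{0d} combined with the transitivity of $GL_n(\mathbb R)$ on ordered bases; everything else is a short bookkeeping computation.
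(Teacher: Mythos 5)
Your proof is correct and follows essentially the same path as the paper's: both rely on Remark \ref{0d} (a valuation of cones in $\mathbb R^n$ is determined by its values on the conical $0$-simplex and on conical $n$-simplices) combined with rigidity to reduce to two generators, and then identify $(\epsilon,e)$ as an isomorphism. The only stylistic difference is that the paper proves injectivity directly by checking that the rigid valuation $\epsilon\,a+e\,b$ reproduces $F$ on both generators, whereas you run the equivalent "sandwich'' argument $\mathbb Z^2\twoheadrightarrow L_n^{rig}\to\mathbb Z^2$ and observe the composite has unit determinant.
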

\begin{proof}
$\epsilon$ and $e$ are certainly rigid, and therefore together they give a group homomorphism $L_n^{rig}\to \mathbb Z\times \mathbb Z$. This is surjective for each $n\ge 1$ , since $(\epsilon,e)$ takes the conical $0$-simplex to $(1,1)$ and takes a conical $n$-simplex to $(1,0)$. We must show that it is injective, or equivalently that for any abelian group $A$ the only rigid $A$-valued valuations of cones in $\mathbb R^n$ are those given by $P\mapsto \epsilon(P)a+e(P)b$ for some $a,b\in A$. 

By Remark \ref{0d} a valuation $F$ of cones in $\mathbb R^n$ is determined by its values on $n$-dimensional conical simplices together with its value on the unique conical $0$-simplex $0$. Furthermore, if $F$ is rigid then its values on any two conical $n$-simplices are equal, so that $F$ is determined by its value on $0$ and its value on a single conical $n$-simplex, say $\sigma$. Let $a\in A$ be $F(\sigma)$ and let $b$ be $F(0)-F(\sigma)$. Then $F(P)=\epsilon(P)a+e(P)b$ for every cone $P$ because this is so when $P=\sigma$ and also when $P=0$. 

The assertion about the case $n=0$ is clear.
\end{proof}
Note that the kernel of the surjection $L_n\to L_n^{rig}$ is the subgroup $\text{ker}(\epsilon)\cap\text{ker}(e)$, the $2$-divisible part. Thus if the abelian group $A$ has no nontrivial $2$-divisible subgroup then every $A$-valued Euclidean valuation for cones in $\mathbb R^n$ is rigid. In particular,  if $k$ is a ring (such as $\mathbb Z$ or $\mathbb Z/2\mathbb Z$) whose additive group has no nontrivial $2$-divisible subgroup, then every $k$-valued morphism is rigid.

For the ring structure of $L^{rig}$ we reason as follows. Scaling the invariants $\epsilon$ and $e$ by an indeterminate yields graded ring maps
$$
 x\bullet\epsilon:L^{rig}\to \mathbb Z\lbrack x\rbrack
 $$
 $$
 x\bullet e:L^{rig}\to \mathbb Z\lbrack x\rbrack.
$$
Combining these, we have a graded ring map $L\to \mathbb Z\lbrack x\rbrack\times \mathbb Z\lbrack x\rbrack$. By Proposition \ref{} this is injective and its image has an additive $\mathbb Z$-basis consisting of the element $1=(1,1)$ in degree zero and the elements $(x^n,0)$ and $(0,x^n)$ in all positive degrees. The elements $(x,0)$ and $(0,x)$ are the images of $d$ and $t-d=-d'$. Therefore as a ring $L^{rig}$ is generated by (the images of) $d$ and $t$ subject only to the relation $d(t-d)=0$. We may also express this conclusion as follows:
\begin{prop}\label{domain}A rigid morphism $F$ is determined by its value $F(d)$ on a half-line in $\mathbb R^1$ and its value $F(t)$ on the origin in $\mathbb R^1$. The only constraint on these is
$$
F(d)(F(d)-F(t))=0.
$$
\end{prop}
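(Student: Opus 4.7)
The plan is to read the proposition directly off the ring presentation of $L^{rig}$ established in the paragraph immediately preceding its statement. A rigid $k$-valued morphism is, by the universal property of $L^{rig}$, the same thing as a ring homomorphism $L^{rig}\to k$, so it suffices to describe such ring maps explicitly.

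First I would record that the displayed analysis shows $L^{rig}$ embeds into $\mathbb Z\lbrack x\rbrack\times\mathbb Z\lbrack x\rbrack$ via $(x\bullet\epsilon,x\bullet e)$, and that under this embedding the images of $d$ and $t$ are $(x,0)$ and $(x,x)$ respectively, so that $d$ and $t-d$ map to the orthogonal idempotent multiples $(x,0)$ and $(0,x)$. From this one reads off $d(t-d)=0$ in $L^{rig}$ and sees that $d$ and $t$ generate the image. Hence the induced surjection $\mathbb Z\lbrack d,t\rbrack/(d(t-d))\to L^{rig}$ is in fact an isomorphism, since its composition with the inclusion into $\mathbb Z\lbrack x\rbrack\times\mathbb Z\lbrack x\rbrack$ already has the right image and both sides are free of the same rank in each degree.

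Given this presentation, a ring map $F:L^{rig}\to k$ is determined by the pair $(F(d),F(t))\in k\times k$, and the only constraint is the image of the defining relation, namely $F(d)(F(t)-F(d))=0$, which is the displayed equation up to a sign (multiplying by $-1$). The multiplicativity condition $F_0(0)=1$ is automatic because the generator $1\in L_0$ maps to $1\in k$ under any ring homomorphism. There is no real obstacle here: the whole content of the proposition is packaged into the ring presentation of $L^{rig}$ that was just derived, so the proof amounts to one sentence citing that presentation and the universal property of the quotient $L\to L^{rig}$.
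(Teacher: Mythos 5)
Your proposal is correct and takes essentially the same route as the paper: the proposition is stated there as a restatement of the ring presentation $L^{rig}\cong\mathbb Z\lbrack d,t\rbrack/(d(t-d))$, which is derived in the immediately preceding paragraph via the embedding $(x\bullet\epsilon,x\bullet e)\colon L^{rig}\hookrightarrow\mathbb Z\lbrack x\rbrack\times\mathbb Z\lbrack x\rbrack$ with $d\mapsto(x,0)$, $t\mapsto(x,x)$. Your rank-counting remark merely fills in a detail the paper leaves implicit, and your observation that the displayed relation is the same as $d(t-d)=0$ up to sign is accurate.
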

\begin{rem}
For a rigid morphism $F$, if $F(d)-F(t)$ is zero then $F$ is the scaling $c\bullet \epsilon$ of the yes/no invariant by $c=F(t)=F(d)$; in other words it is the identity morphism of the object $c$. If $F(d)=0$ then $F$ is the scaling $c\bullet e$ of the local Euler invariant by $c=F(t)$. Thus if $k$ is a domain then the only rigid $k$-valued morphisms are the identity morphisms and the scalings of $e$.
\end{rem}
Note that the canonical map $L/2L\to L^{rig}/2L^{rig}$ is an isomorphism. Note also that, while the projection $L_n\to L_n^{rig}$ is a split surjection of groups for each $n$, the projection $L\to L^{rig}$ has no right inverse as a ring map.

\subsection{Relation with the spherical scissors congruence Hopf algebra}

Inside the groupoid $\mathcal G_k$ is the group of morphisms from $0$ to $0$. The representing ring for this functor of $k$ is the quotient $\bar L=L/(tL+sL)$ of $L$ by the ideal generated by $t$ and $s$. Recall that $\hat L=L/tL$ is the spherical scissors congruence ring; $\bar L$ is a further quotient $\hat L/s\hat L$. The group law corresponds to a ring map $\bar L\to \bar L\otimes \bar L$. This makes $\bar L$ a Hopf algebra over $\mathbb Z$, with antipode given by the duality operator $D$. 
The group acts on multiplicative invariants located at $0$; in other words, the ring $\hat E=E/pE$ is a comodule for $\hat L$.

This Hopf algebra structure on a quotient of the spherical scissors congruence ring, with the Euclidean scissors congruence ring as a comodule, appears in \cite{Sah}, in different notation. (Actually Sah's quotient is very slightly smaller; he works with $\hat L/d\hat L$ rather than $\hat L/s\hat L$.)

\section{Manifolds without boundary and Euclidean invariants}
 
Of course the groups $E_n$ and $L_n$ are generated by manifolds with boundary (compact and conical, respectively); the one is generated by convex polytopes and the other by convex cones. In this section we pay special attention to elements of $E_n$ and $L_n$ that are given by manifolds with empty boundary.

We will find that $E_n$ splits as the direct sum $E_n^{ev}\oplus E^{od}_n$ of the subgroup generated by closed even-dimensional manifolds and the subgroup generated by closed odd-dimensional manifolds, and that these coincide with the $+1$ and $-1$ ``eigenspaces'' of the operator $I$. In some sense the most important part of the graded ring $E$ is the subring $$
E^+=E_0^{ev}\oplus E_1^{od}\oplus E_2^{ev}\oplus E_3^{od}\oplus\dots
$$
consisting of elements of $E_n$ fixed by $(-1)^nI$. We will see that $E$ is a free module of rank two over $E^+$.

In the case of $L_n$ there are similar but more complicated statements. Again we introduce a graded subring $L^+$ such that $L_n^+$ turns out to be the subgroup generated by those manifolds without boundary whose dimensions are congruent to $n$ mod $2$. Again $L$ is a free module of rank two over $L^+$. In other ways, results about $L$ are less complete than those about $E$, mainly because less is known about spherical scissors congruence groups than about Euclidean scissors congruence groups. In particular it is not known whether $L$ has any $2$-torsion (or any torsion at all). Modulo $2$-torsion we describe $L$ as a free module of rank four over a graded subring $\Theta$ which, modulo $2$-torsion, coincides with $L^+\cap D(L^+)$. This subring can be described in several ways and seems to be the most important part of $L$.

\subsection{Closed manifold elements in $E$}

\begin{defin}
$E_n^{ev}\subset E_n$ is the subgroup generated by all elements $\lbrack M\rbrack_n$ where $M$ is a closed manifold (with Euclidean structure) of even dimension $\le n$.
$E_n^{od}\subset E_n$ is the subgroup generated by all elements $\lbrack M\rbrack_n$ where $M$ is a closed manifold of odd dimension $\le n$.
\end{defin}
\begin{rem}In fact $E_n^{ev}$ is generated by closed manifolds of dimension $n$ or $n-1$, whichever of these numbers is even, and likewise for $E_n^{od}$. To see this, it suffices to show that the point element $p^2$ in $E_2$ belongs to the subgroup generated by closed $2$-manifolds. In fact, by the description of $E_2$ in \S\ref{}, $p^2$ is the difference between a sphere and a projective plane (each with Euclidean structure) having equal $2$-dimensional volume.
\end{rem}
\begin{thm}
$E_n=E_n^{ev}\oplus E_n^{od}$. Moreover, $E_n^{ev}=ker(\mathbb I-I)$ and $E_n^{od}=ker(\mathbb I+I)=im(\mathbb I-I)$, while the group $im(\mathbb I+I)$ has index two in $E_n^{ev}$ and is generated by closed even-dimensional manifolds of even Euler characteristic.
\end{thm}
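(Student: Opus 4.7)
The plan is to leverage the canonical splitting $E_n\cong\bigoplus_{0\le i\le n}\hat E_i$ of Proposition \ref{alpha}, combined with the identity $\alpha_i\circ I=(-1)^i\alpha_i$ from equation (\ref{alpha I}). Together these show that $I$ acts diagonally on the splitting as multiplication by $(-1)^i$ on $\hat E_i$, giving the direct sum decomposition $E_n=\ker(\mathbb I-I)\oplus\ker(\mathbb I+I)$ with $\ker(\mathbb I-I)=\bigoplus_{i\ \text{even}}\hat E_i$ and $\ker(\mathbb I+I)=\bigoplus_{i\ \text{odd}}\hat E_i$. Since $\mathbb I\mp I$ acts as $1\mp(-1)^i$ on $\hat E_i$, and since $\hat E_i$ is a real vector space (hence $2$-divisible) for $i\ge 1$ by the result of \cite{JT,Sah} cited earlier, one obtains $\mathrm{im}(\mathbb I-I)=\bigoplus_{i\ \text{odd}}\hat E_i=\ker(\mathbb I+I)$, and $\mathrm{im}(\mathbb I+I)=2\hat E_0\oplus\bigoplus_{i\ \text{even},\,i\ge 2}\hat E_i$, which has index two in $\ker(\mathbb I-I)$, the quotient being detected by parity of the $\hat E_0$-coefficient, equivalently by $\chi\bmod 2$.

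The containments $E_n^{ev}\subseteq\ker(\mathbb I-I)$ and $E_n^{od}\subseteq\ker(\mathbb I+I)$ are immediate from equation (\ref{intM}), since a closed $d$-manifold $M$ has $\partial M=\emptyset$ and hence $I[M]_n=(-1)^d[M]_n$. The reverse containments I would prove by induction on $n$, with trivial base case $n=0$. The ``inclusion'' $E_{n-1}\hookrightarrow E_n$ sends $E_{n-1}^{ev}$ into $E_n^{ev}$ and $E_{n-1}^{od}$ into $E_n^{od}$, so the inductive hypothesis realizes all summands $\hat E_i$ with $i<n$ of the appropriate kernel. For the top summand $\hat E_n$, let $P$ be a convex $n$-polytope; Remark \ref{boundary double} gives $[\mathcal DP]_n=[P]_n+(-1)^n I[P]_n$, a closed $n$-manifold lying in $E_n^{ev}$ or $E_n^{od}$ according as $n$ is even or odd. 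Under the splitting, write $[\mathcal DP]_n=\xi_n+\xi_{<n}$ with $\xi_n\in\hat E_n$ and $\xi_{<n}\in pE_{n-1}$: then $\xi_n=2\tau_n(P)$, and since both $[\mathcal DP]_n$ and $\xi_n$ lie in the appropriate kernel, so does $\xi_{<n}$, which by induction applied to $E_{n-1}$ lies in the appropriate subgroup of $E_n$; hence so does $\xi_n=[\mathcal DP]_n-\xi_{<n}$. As $P$ varies, $\{2\tau_n(P)\}$ spans $2\hat E_n=\hat E_n$ by $2$-divisibility, completing the induction.

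For the last assertion, the generators of $\mathrm{im}(\mathbb I+I)$ are the elements $(\mathbb I+I)[P]_n$ for convex polytopes $P$; by Remark \ref{boundary double} each equals $[\mathcal DP]_n$ when $\dim P$ is even and $[\partial P]_n$ when $\dim P$ is odd, both closed even-dimensional manifolds with $\chi=2$. Conversely, any closed even-dimensional $M$ with $\chi(M)$ even lies in $\mathrm{im}(\mathbb I+I)$, since its $\hat E_0$-component equals $\chi(M)\in 2\mathbb Z$ while its higher even components automatically lie in the image by the calculations above. The principal obstacle throughout is the top-dimensional induction step, which crucially combines the double construction (producing closed manifolds of scissors class $2\tau_n(P)$) with the $2$-divisibility of $\hat E_n$ for $n\ge 1$ to absorb the inevitable factor of $2$.
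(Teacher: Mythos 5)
Your proposal is correct, and it rests on the same underlying facts as the paper's proof (the splitting $E_n\cong\bigoplus_{0\le i\le n}\hat E_i$ with $I$ acting as $(-1)^i$ on $\hat E_i$, the $2$-divisibility of $\hat E_i$ for $i\ge 1$, and the boundary/double relation of Remark \ref{boundary double}), but it is organized rather differently in one place. The paper first records the chain of inclusions $im(\mathbb I+I)\subset E_n^{ev}\subset ker(\mathbb I-I)$ and $im(\mathbb I-I)\subset E_n^{od}\subset ker(\mathbb I+I)$, then splits $E_n=\mathbb Zp^n\oplus\ker(\chi)$ with the second factor uniquely $2$-divisible, deduces at once that $ker(\mathbb I+I)=im(\mathbb I-I)$ and that $ker(\mathbb I-I)/im(\mathbb I+I)$ has order two, and then lets the sandwich do the rest. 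By contrast you establish the reverse inclusions $\ker(\mathbb I\mp I)\subseteq E_n^{ev/od}$ by an explicit induction on $n$, handling the top graded piece $\hat E_n$ by realizing $2\tau_n(P)$ as the top component of the double $[\mathcal DP]_n$ and invoking $2$-divisibility to absorb the factor of $2$. That induction is sound, but it is more work than needed: once you know $im(\mathbb I+I)\subset E_n^{ev}\subset\ker(\mathbb I-I)$ with the outer quotient of order two, the single observation that the point element $p^n\in E_n^{ev}$ has $\chi(p^n)=1$ odd (so it cannot lie in $im(\mathbb I+I)$, where $\chi$ is always even) already forces $E_n^{ev}=\ker(\mathbb I-I)$; and $E_n^{od}=\ker(\mathbb I+I)$ follows immediately from $im(\mathbb I-I)\subset E_n^{od}\subset\ker(\mathbb I+I)=im(\mathbb I-I)$. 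Your treatment of the last assertion (generators of $im(\mathbb I+I)$ as doubles of even simplices and boundaries of odd ones, each with $\chi=2$, together with the converse via the $\hat E_0$-coordinate) is complete and matches what the paper leaves implicit.
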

\begin{proof}
Because $I\circ I=\mathbb I$, $im(\mathbb I\mp I)$ is contained in $ker(\mathbb I\pm I)$ and the quotient is killed by $2$. Moreover, we have inclusions
$$
im(\mathbb I+I)\subset E_n^{ev}\subset ker(\mathbb I-I),
$$
and
$$
im(\mathbb I-I)\subset E_n^{od}\subset ker(\mathbb I+I)
$$
by Remark \ref{boundary double}. This yields a weak form of the result: after inverting the prime $2$ the inclusions displayed above become equalities and the ``eigenspaces'' of $I$ coincide with $E_n^{ev}$ and $E_n^{od}$. 

We next show that $E_n=ker(\mathbb I-I)\oplus ker(\mathbb I+I)$ and that the quotients \newline
$ker(\mathbb I-I)/im(\mathbb I+I)$ and $ker(\mathbb I+I)/im(\mathbb I-I)$ have order two and one respectively. 

The Euler characteristic map $\chi:E_n\to \mathbb Z$ splits $E_n$ as $\mathbb Zp^n\oplus ker(\chi)$, and the involution $I$ preserves this splitting. On the summand $\mathbb Zp^n$, an infinite cyclic group, $I$ acts trivially, so that for this summand $ker(\mathbb I- I)/im(\mathbb I+I)$ has order two and $ker(\mathbb I+ I)$ is trivial. The other summand $ker(\chi)$ is isomorphic to $\hat E_1\oplus\dots \oplus \hat E_n$, by the results of \S\ref{E split}, and is therefore uniquely $2$-divisible (in fact, a real vector space), which is enough to guarantee that it is the direct sum of $ker(\mathbb I- I)$ and $ker(\mathbb I+I)$, and that in this group $ker(\mathbb I- I)=im(\mathbb I+ I)$ and $ker(\mathbb I-I)=im(\mathbb I-I)$.

The rest of the statement of the Theorem now follows using the inclusions above.
\end{proof}

Let us examine this splitting in the light of the results of  \S\ref{E split}. By (\ref{alpha I}) the isomorphism $E_n\cong\bigoplus_{0\le i\le n}\hat E_i$ in the proof of Proposition \ref{alpha} takes $\ker(\mathbb I- I)$ and $ker(\mathbb I+ I)$ respectively into $\bigoplus\hat E_{2j}$ and $\bigoplus\hat E_{2j+1}$. Therefore under that isomorphism the involution $I:E_n\to E_n$ corresponds to the involution of $\bigoplus_{0\le i\le n}\hat E_i$ that acts like $(-1)^i$ on $\hat E_i$.

Let $E^+_n\subset E_n$ consist of those elements $\xi$ such that $I(\xi)=(-1)^n\xi$, and let $E^-_n$ consist of those such that $I(\xi)=-(-1)^n\xi$. By the theorem above, these coincide with $E_n^{ev}$ and $E_n^{od}$ or vice versa, according to the parity of $n$. Note that $E^+$ is a graded subring of $E$ and that $E^-$ is a module for $E^+$.

\begin{thm}\label{plus}As a module for the subring $E^+$, $E$ has a basis consisting of $1$ and $p$. \end{thm}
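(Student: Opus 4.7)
The plan is to show $E_n = E_n^+ \oplus pE_{n-1}^+$ for each $n \ge 0$, which is precisely the assertion that $\{1,p\}$ is a free $E^+$-basis for $E$. A point is a $0$-dimensional manifold, so (\ref{intM}) gives $I(p) = p$, and since $I$ is a ring map, $I(p\xi) = pI(\xi)$ for all $\xi$. Therefore $p\cdot E_{n-1}^+ \subseteq E_n^-$: if $I\xi = (-1)^{n-1}\xi$ then $I(p\xi) = (-1)^{n-1}p\xi = -(-1)^n(p\xi)$. Moreover $p$ is not a zero-divisor in $E$, since under the isomorphism $E \cong \hat E[x]$ of Proposition \ref{alpha} it corresponds to the polynomial indeterminate $x$. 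Thus multiplication by $p$ restricts to an injection $E_{n-1}^+ \hookrightarrow E_n^-$.

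To see this map is also surjective, I would work with the decomposition $E_n \cong \bigoplus_{i=0}^n \hat E_i$ from Proposition \ref{alpha}. By (\ref{alpha I}) and the remark following it, this identification carries $I$ to the operator that acts as $(-1)^i$ on $\hat E_i$; hence
$$
E_n^+ \;=\; \bigoplus_{0 \le i \le n,\ n-i\ \mathrm{even}} \hat E_i, \qquad E_n^- \;=\; \bigoplus_{0 \le i \le n,\ n-i\ \mathrm{odd}} \hat E_i.
$$
Under the same identification, multiplication by $p$ becomes the obvious graded inclusion $\bigoplus_{i=0}^{n-1} \hat E_i \hookrightarrow \bigoplus_{i=0}^n \hat E_i$. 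Restricting to $E_{n-1}^+ = \bigoplus_{0 \le i \le n-1,\ (n-1)-i\ \mathrm{even}} \hat E_i$ yields $pE_{n-1}^+ = \bigoplus_{0 \le i \le n-1,\ n-i\ \mathrm{odd}} \hat E_i$; since $n - n = 0$ is even, the constraint $i \le n-1$ is automatic on the parity-selected set, so in fact $pE_{n-1}^+ = E_n^-$.

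Combining the two parts gives $E_n = E_n^+ \oplus pE_{n-1}^+$, which is the claim. (The direct sum decomposition $E_n = E_n^+ \oplus E_n^-$ used implicitly here was supplied by the preceding theorem, and could also be read off directly from the displayed parity decomposition above.) I expect the only obstacle to be parity-bookkeeping; the genuine content has already been done in Proposition \ref{alpha}.
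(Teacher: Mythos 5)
Your proof is correct, and for the surjectivity step it takes a genuinely different route from the paper's. The paper's own proof of surjectivity is geometric: it invokes the identification, from the preceding theorem, of $E_n^-$ with the subgroup generated by closed manifolds of dimension $\le n$ of parity opposite to $n$, and simply notes that such a manifold has dimension $\le n-1$ of the same parity as $n-1$, hence lies in $pE_{n-1}^+$. You instead argue algebraically through the isomorphism $E\cong\hat E\lbrack x\rbrack$ of Proposition \ref{alpha} together with (\ref{alpha I}), identifying $E_n^\pm$ with the even/odd parity-graded pieces of $\bigoplus_{i\le n}\hat E_i$ and reducing everything to bookkeeping about which indices $i$ satisfy $i\le n-1$. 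Both routes are available at that point in the paper (the text immediately before Theorem \ref{plus} already records the $(-1)^i$ description of $I$ under the splitting, and the remark just after the theorem observes that $E^+$ corresponds to $\hat E\lbrack x^2\rbrack$, which is essentially your picture); the paper's argument is shorter and emphasizes the manifold interpretation, while yours makes the analogy with $k\lbrack x\rbrack\cong k\lbrack x^2\rbrack\oplus xk\lbrack x^2\rbrack$ explicit. One small point worth flagging: for the parity decomposition $E_n=E_n^+\oplus E_n^-$ to come directly from the $\hat E_i$ splitting one needs $\hat E_0\cong\mathbb Z$ and the $\hat E_i$, $i>0$, to be torsion-free (real vector spaces), which is established in the proof of the preceding theorem; worth citing explicitly since without it the operator $(-1)^i$ on $\hat E_i$ need not have complementary eigenspaces.
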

\begin{proof}
The element $p$ belongs to $E_1^-$. It remains to prove that the group homomorphism $E^+_{n-1}\to E^-_n$ given by multiplication by $p$ is an isomorphism. It is injective because $p$ is not a zero-divisor in $E$. It is surjective for the simple reason that if the dimension of a manifold is $\le n$ and of the opposite parity from $n$ then it is $\le n-1$ and of the same parity as $n-1$. 
\end{proof}

Note that in the ring isomorphism of Proposition \ref{alpha} the subring $E^+$ of $E$ corresponds to the subring $\hat E\lbrack x^2\rbrack$ of $\hat E\lbrack x\rbrack$.

\subsection{Interior manifold elements in $L$}\label{int elem}
Having examined the subgroup of $E_n$ that is generated by even- or odd-dimensional manifolds without boundary, we now do the same for $L_n$. Call a cone an \emph{interior manifold cone} if it is a nonempty manifold with empty boundary, and call it a \emph{boundary manifold cone} if it is a manifold with nonempty boundary. (The discussion in \S\ref{interior cone} is relevant here.)
\begin{defin}
$L^{ev}_n$ is the subgroup of $L_n$ generated by even-dimensional interior manifold cones. $L^{od}_n$ is the subgroup of $L_n$ generated by odd-dimensional interior manifold cones.
\end{defin} 
\begin{rem}In fact $L_n^{ev}$ is generated by interior manifold cones of dimension $n$ or $n-1$, whichever of these numbers is even, and likewise for $E_n^{od}$. To see this, it suffices to verify that the point element $t^2$ in $L_2$ belongs to the subgroup generated by two-dimensional interior manifold cones. The element $t^2+a(\theta)$ belongs to this subgroup for all $\theta>0$, by Remark \ref{}, and so therefore does $2(t^2+a(\pi))-(t^2+a(2\pi))=t^2$.
\end{rem}

As before, we have inclusions
$$
im(\mathbb I+I)\subset L_n^{ev}\subset ker(\mathbb I-I)
$$
$$
im(\mathbb I-I)\subset L_n^{od}\subset ker(\mathbb I+I).
$$
Thus after inverting the prime $2$ it is true that $L$ becomes the direct sum of $L_n^{ev}$ and $L_n^{od}$ and that these are the $+1$ and $-1$ eigenspaces of $I$. 

Without inverting $2$ there are several differences between the polytope case and the cone case.
One difference is that, whereas $E_n$ is generated by closed manifolds, $L_n$ is not quite generated by interior manifold cones if $n>0$. In fact, the valuation $\epsilon-e:L_n\to \mathbb Z$ takes even values (namely $0$ or $2$) on interior manifold cones and takes the value $1$ on boundary manifold cones such as the half-line element $t^{n-1}d$. On the other hand, this is the only obstruction:
\begin{prop}\label{M}
For every $n>0$ the subgroup $L_n^{ev}+L_n^{od}\subset L_n$ has index two. 
\end{prop}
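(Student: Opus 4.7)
The plan is to identify the quotient $L_n/(L_n^{ev}+L_n^{od})$ with $\Z/2\Z$ via the mod-$2$ reduction $\bar\varphi := (\epsilon - e)\bmod 2$. The half-line element gives $\bar\varphi(t^{n-1}d) = 1$, so $\bar\varphi$ is surjective. For any nonempty $m$-dimensional interior manifold cone $P$, the link $P\cap S^{n-1}$ must be PL homeomorphic to $S^{m-1}$ (this is exactly what makes the cone point a manifold point), so $e(P) = 1 - \chi(S^{m-1}) = (-1)^m$ and $(\epsilon-e)(P) = 1 - (-1)^m \in \{0,2\}$ is always even. Hence $\bar\varphi$ vanishes on $L_n^{ev}+L_n^{od}$, giving the lower bound $[L_n : L_n^{ev}+L_n^{od}] \ge 2$.

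For the upper bound the key ingredient is the containment $2L_n \subset L_n^{ev}+L_n^{od}$. Granted this, Proposition \ref{L/2} (which gives $\{t^n,\, t^{n-1}d\}$ as a $\Z/2\Z$-basis for $L_n/2L_n$) combined with $t^n \in L_n^{ev}$ shows that every $\xi \in \ker\bar\varphi$ differs from an integer multiple of $t^n$ by an element of $2L_n$, hence lies in $L_n^{ev}+L_n^{od}$; so the index is at most $2$.

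To establish $2L_n \subset L_n^{ev}+L_n^{od}$ it suffices to show $2\langle\sigma\rangle_n \in L_n^{ev}+L_n^{od}$ for every conical simplex $\sigma$. The case $\dim\sigma = 0$ is clear since $t^n \in L_n^{ev}$. For a conical $m$-simplex $\sigma$ with $m \ge 1$, the formula $I\langle\sigma\rangle = (-1)^m\langle\sigma\rangle + (-1)^{m-1}\langle\partial\sigma\rangle$ from \S\ref{interior cone} yields
$$
2\langle\sigma\rangle_n = \langle\mathcal D\sigma\rangle_n + \langle\partial\sigma\rangle_n,
$$
where $\langle\mathcal D\sigma\rangle_n := 2\langle\sigma\rangle_n - \langle\partial\sigma\rangle_n$ is the algebraic class of the double. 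First, $\partial\sigma$ is the infinite cone on the PL $(m-2)$-sphere $\partial(\sigma\cap S^{m-1})$, so it is itself an $(m-1)$-dimensional interior manifold cone and $\langle\partial\sigma\rangle_n \in L_n^{ev}+L_n^{od}$. Second, I would realize $\mathcal D\sigma$ geometrically by embedding $\sigma\cap S^{m-1}$ isometrically into the upper hemisphere of $S^m \subset \R^{m+1}$, reflecting across the equator to obtain a second copy in the lower hemisphere, and taking the cone in $\R^{m+1}$ on the union of the two spherical simplices (which share their common boundary on the equator); the link of this cone is a PL $(m-1)$-sphere, so the cone is an $m$-dimensional interior manifold cone, and by inclusion--exclusion (interpreting its class in $L_n$ via Remark \ref{other}) that class equals $2\langle\sigma\rangle - \langle\partial\sigma\rangle = \mathcal D\sigma$. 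The main obstacle is precisely this geometric realization of $\mathcal D\sigma$ as an interior manifold cone; the remaining steps are routine bookkeeping with Proposition \ref{L/2}.
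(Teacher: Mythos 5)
Your strategy matches the paper's essentially step for step: the lower bound is the mod-$2$ reduction of $\epsilon - e$ (which the paper records in the sentence immediately preceding the proposition), and the upper bound is the inclusion $2L_n \subset L_n^{ev}+L_n^{od}$ combined with Proposition~\ref{L/2} and $t^n\in L_n^{ev}$. Your identity $2\langle\sigma\rangle_n = \langle\mathcal D\sigma\rangle_n + \langle\partial\sigma\rangle_n$ is exactly the paper's $2\xi = (\xi+I\xi)+(\xi-I\xi)$ unwound on a conical $m$-simplex, using the inclusions $\operatorname{im}(\mathbb I+I)\subset L_n^{ev}$ and $\operatorname{im}(\mathbb I-I)\subset L_n^{od}$ which the paper cites with ``As before,'' pointing back to Remark~\ref{boundary double}.

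There is, however, a gap in the one step you flag as the ``main obstacle.'' Your proposed isometric embedding of $\Delta := \sigma\cap S(\operatorname{span}\sigma)$ into the closed upper hemisphere of $S^m$ with $\partial\Delta$ on the equator $S^{m-1}$ does not exist in general. Already for $m=2$ it fails: $\Delta$ is a geodesic arc of length $\theta < \pi$, and any great-circle arc whose interior lies off the equator of $S^2$ and whose endpoints lie on the equator has antipodal endpoints (two distinct great circles meet in an antipodal pair), hence has length exactly $\pi$. So the single-reflection picture cannot produce the double of a proper conical simplex. The fact you actually need --- that $\mathcal D\sigma$ determines a class of $L_n^{ev}$ or $L_n^{od}$ --- is true, but is most cleanly obtained from the abstract Euclidean-cone framework alluded to in Remark~\ref{abstract} (whose conical analogue the paper declines to write out): the complex $\sigma\cup_{\partial\sigma}\sigma$ carries a canonical conical Euclidean structure, its link at the cone point is the double of the PL $(m-1)$-disk $\Delta$ and hence a PL $(m-1)$-sphere, so it is an interior manifold cone in the abstract sense and gives the needed generator. (If one wants a literal cone in some $\mathbb R^N$, one can subdivide and fold in the conical cells one at a time using extra ambient dimensions; the point is that one needs more room than the single extra coordinate your construction allows.) The rest of your argument --- surjectivity of $\bar\varphi$ on $t^{n-1}d$, evenness of $\epsilon-e$ on interior manifold cones via the link being a sphere, and the bookkeeping with Proposition~\ref{L/2} --- is correct and agrees with the paper.
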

\begin{proof}
For every $\xi\in L_n$ we have 
$$
2\xi=(\xi +I(\xi))+(\xi-I(\xi))\in L_n^{ev}+L_n^{od}.
$$
Therefore the quotient $L_n/(L_n^{ev}+L_n^{od})$ is killed by two. By Proposition \ref{L/2} it follows that the quotient is generated by the point element $t^n$ and the half-line element $t^{n-1}d$. Since the point element belongs to $L_n^{ev}$, the quotient is generated by $t^{n-1}d$ alone.
\end{proof}

We now define a subring $L^+\subset L$ analogous to $E^+$ and prove a statement similar to Theorem \ref{plus}, using a necessarily different method. 

$L_n^+$ is designed to coincide with $L_n^{ev}$ or $L_n^{od}$ according to parity. We do not define it to be the kernel of $\mathbb I-(-1)^nI$ (as we did $E_n^+$). Instead we use the operator $\delta_n:\mathcal P_n\to \mathcal P_{n-1}$ of \S\ref{boundaries} to introduce a map $\delta_n:L_n\to L_{n-1}$ such that 
\begin{equation}\label{t delta}
t\delta_n(\xi)=\xi-(-1)^nI\xi,
\end{equation}
and we define $L_n^+$ to be the kernel of $\delta_n$. 
\begin{lemma}
There is a group homomorphism $\delta_n:L_n\to L_{n-1}$ such that 
\begin{equation}\label{one}
\delta_n\langle M\rangle_n=\langle \partial M\rangle_{n-1}
\end{equation}
if $M$ is a boundary manifold cone of dimension $n-2k\le n$,
\begin{equation}\label{two}
\delta_n\langle M\rangle_n=\langle\mathcal DM\rangle_{n-1}
\end{equation}
if $M$ is a boundary manifold cone of dimension $n-2k-1\le n$,
\begin{equation}\label{three}
\delta_n\langle M\rangle_n=0
\end{equation}
if $M$ is an interior manifold cone of dimension $n-2k\le n$,
and \begin{equation}
\delta_n\langle M\rangle_n=2\langle M\rangle_{n-1}
\end{equation}
if $M$ is an interior manifold cone of dimension $n-2k-1\le n$.
\end{lemma}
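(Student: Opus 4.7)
The plan is to obtain $\delta_n$ by descending the polytope boundary operator $\delta$ of \S\ref{boundaries} to Euclidean coinvariants, and then to read off the four formulas from Equations (\ref{boundary}) and (\ref{double}).

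First I would apply the polytope operator $\delta^{\mathcal P}:\mathcal P_n\to\mathcal P_{n-1}$ to the pair $(\mathbb R^n,\mathbb R^n-0)$. Using the identifications $\Sigma_m(V)=\mathcal P_m(V,V-0)$ this yields a map
$$
\Sigma(\mathbb R^n)=\mathcal P_n(\mathbb R^n,\mathbb R^n-0)\xrightarrow{\delta^{\mathcal P}}\mathcal P_{n-1}(\mathbb R^n,\mathbb R^n-0)=\Sigma_{n-1}(\mathbb R^n),
$$
which is $O_n$-equivariant by the naturality of $\delta$. Passing to $O_n$-coinvariants and identifying $(\Sigma_{n-1}(\mathbb R^n))_{O_n}$ with $L_{n-1}$ via the analog of Remark \ref{other} (take $d=n$) produces the required homomorphism $\delta_n:L_n\to L_{n-1}$.

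Second, to verify the four formulas I would represent a manifold cone $M$ of dimension $m\le n$ via the colimit description of $L_n$ in Remark \ref{other}: choose a compact Euclidean $m$-manifold $X$ (closed if $M$ is an interior manifold cone, with boundary if $M$ is a boundary manifold cone) and a point $x\in X$ whose normal cone in $X$ equals $M$, so that $\langle M\rangle_n$ is the image of $(X)\in\mathcal P(X,X-x)$. By naturality of $\delta$, $\delta_n\langle M\rangle_n$ is the image in $L_{n-1}$ of $\delta_n^{\mathcal P}(X)\in\mathcal P_{n-1}(X,X-x)$. Equations (\ref{boundary}) and (\ref{double}), shifted from $n+1$ to $n$ and with $d=m$, evaluate this to $(\partial X)$ when $n-m$ is even and to $(\mathcal DX)$ when $n-m$ is odd. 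Substituting $m=n-2k$ or $m=n-2k-1$, and using $\partial M=\emptyset$ together with $\mathcal DM=M\sqcup M=2M$ in the interior cases, yields the four stated formulas.

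The only nontrivial step is the identification $(\Sigma_{n-1}(\mathbb R^n))_{O_n}\cong L_{n-1}$, along with the compatibility of the two descriptions of $L_n$ used above (coinvariants in $\Sigma(\mathbb R^n)$ versus the pointed-polytope colimit). Both reduce to the observation that $O_n$ acts transitively on $(n-1)$-dimensional subspaces of $\mathbb R^n$ with stabilizer $O_{n-1}$, which is routine but warrants being stated carefully.
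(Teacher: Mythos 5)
Your proposal is correct and is essentially the same argument as the paper's: define $\delta_n$ by applying the boundary operator of \S1.8 to $\Sigma(\mathbb{R}^n)=\mathcal{P}_n(\mathbb{R}^n,\mathbb{R}^n-0)$, observe $O_n$-equivariance, pass to coinvariants, and use Equations (\ref{boundary}) and (\ref{double}) (shifted by one in $n$) to read off the formulas. One modest improvement in your write-up: you explicitly treat the interior-manifold-cone cases (\ref{three}) and the $2\langle M\rangle_{n-1}$ case by passing to a compact model ($X$ closed, with germ $M$ at $x$), whereas the paper only mentions the boundary-manifold-cone cases and leaves the interior cases tacit; your version also correctly flags that the identification $(\Sigma_{n-1}(\mathbb{R}^n))_{O_n}\cong L_{n-1}$ and the compatibility between the coinvariant and colimit descriptions of $L$ deserve a word, which the paper assumes via Remark \ref{other}.
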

\begin{rem}
There can be at most one such map, since $L_n$ is generated by conical simplices, which are manifold cones. Clearly any such map will satisfy (\ref{t delta}), since this must be the case when $\xi$ is given by a manifold cone, by Remark \ref{boundary double}. If we knew that $t$ was not a zero-divisor in $L$, then we could use (\ref{t delta}) to define $\delta_n$, and we would also know then that the kernel of $\delta_n$ coincides with the kernel of $\mathbb I-(-1)^nI$.
\end{rem}
\begin{proof}
View $L_n$ as the group of coinvariants for the action of $O_n$ on $\Sigma(\mathbb R^n)=\Sigma_n(\mathbb R^n)=\mathcal P_n(\mathbb R^n,\mathbb R^n-0)$, and (using Remark \ref{other}) view $L_{n-1}$ as the group of coinvariants for the action of $O_n$ on $\mathcal P_{n-1}(\mathbb R^n,\mathbb R^n-0)$.
Recall the map 
$$
\delta_n:\mathcal P_n(\mathbb R^n,\mathbb R^n-0)\to \mathcal P_{n-1}(\mathbb R^n,\mathbb R^n-0)
$$
defined in \S\ref{boundaries}. This map is compatible with the $O_n$-action and therefore yields a map $L_n\to L_{n-1}$. The latter behaves as specified on boundary manifold cones because of (\ref{boundary}) and (\ref{double}).
\end{proof}
In addition to (\ref{t delta}) we have
\begin{equation}\label{delta t}
\delta_{n+1}(t\xi)=\xi+(-1)^nI\xi.
\end{equation}
We will write $\delta$ instead of $\delta_n$ when there is no danger of ambiguity. We have
$$
\delta(t)=2\hskip .2 in \delta(d)=1\hskip .2 in \delta(s)=0\hskip .2 in\delta(d')=-1
$$$$
\delta(a(\theta))=0.
$$
Equations (\ref{dd}), (\ref{Idelta_n}), (\ref{delta_nI}), and (\ref{prodrule}) yield 
\begin{equation}\label{sq0}
\delta\circ\delta=0
\end{equation}
\begin{equation}\label{Idelta}
(\mathbb I+(-1)^nI)\circ\delta_n=0
\end{equation}
\begin{equation}\label{deltaI}
\delta_n\circ (\mathbb I+(-1)^nI)=0
\end{equation}
and
\begin{equation}\label{leib}
\delta(\xi\eta)=\delta(\xi)\eta+(-1)^iI(\xi)\delta(\eta)
\end{equation}
when $\xi\in L_i$.
\begin{defin}$L_n^+$ is the kernel of $\delta_n:L_n\to L_{n-1}$.
\end{defin}
By (\ref{leib}), $L^+$ is a graded subring of $L$.
\begin{thm}\label{L+}
$L_n^+$ coincides with the image of $\delta_{n+1}:L_{n+1}\to L_n$. As a module for $L^+$, $L$ has a basis consisting of $1$ and $d$. The group $L_n^+$ is equal to $L_n^{ev}$ if $n$ is even, and it is equal to $L_n^{od}$ if $n$ is odd. If $n>0$ then $L_n^+$ coincides with the image of $\mathbb I+(-1)^nI$.  
\end{thm}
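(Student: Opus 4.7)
My plan is to prove the four assertions in order, each deduced from the previous together with the identities (\ref{t delta})--(\ref{leib}) and the Lemma computing $\delta_n$ on manifold cones.

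The first two parts are purely algebraic. For Part 1, $\mathrm{im}(\delta_{n+1})\subseteq L_n^+$ is (\ref{sq0}); conversely, if $\xi\in L_n^+$ then the Leibniz rule (\ref{leib}) combined with $\delta(d)=1$ and $\delta(\xi)=0$ gives $\delta(d\xi)=\xi$, exhibiting $\xi$ as an element of $\mathrm{im}(\delta_{n+1})$. For Part 2, given $\xi\in L_n$ set $\beta:=\delta(\xi)\in L_{n-1}^+$ (by (\ref{sq0})) and $\alpha:=\xi-d\beta$; a direct Leibniz computation using $\delta(\beta)=0$ shows $\delta(\alpha)=0$, yielding the decomposition $\xi=\alpha+d\beta$ with $\alpha\in L_n^+$. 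Uniqueness: if $\alpha+d\beta=0$ with $\alpha,\beta\in L^+$, applying $\delta$ forces $\beta=0$ and hence $\alpha=0$.

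For Part 3, Part 1 identifies $L_n^+$ with $\delta_{n+1}(L_{n+1})$. Since $L_{n+1}$ is generated (Remark \ref{0d}) by conical $(n+1)$-simplices $\sigma$ together with the origin $t^{n+1}$, and since the Lemma gives $\delta\langle\sigma\rangle_{n+1}=\langle\partial\sigma\rangle_n$ (a closed $n$-manifold cone) while $\delta(t^{n+1})\in\{0,2t^n\}$, each such image lies in $L_n^{\mathrm{ev}}$ or $L_n^{\mathrm{od}}$ according to the parity of $n$, giving one inclusion. Conversely, if $M$ is an interior manifold cone of dimension $m\equiv n\pmod 2$, then by the Lemma $\delta_n\langle M\rangle_n=0$, so $\langle M\rangle_n\in L_n^+$.

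For Part 4 (with $n>0$), the inclusion $\mathrm{im}(\mathbb{I}+(-1)^n I)\subseteq L_n^+$ is (\ref{deltaI}). For the reverse, (\ref{delta t}) identifies $\mathrm{im}(\mathbb{I}+(-1)^n I)$ with $\delta_{n+1}(tL_n)$, so by Part 1 it suffices to show $L_{n+1}=tL_n+L_{n+1}^+$, i.e.\ that the projection $L_{n+1}^+\to\hat L_{n+1}$ is surjective. Now $\hat L_{n+1}$ is generated by classes $[\sigma]$ of conical $(n+1)$-simplices; the identity $\langle\mathcal D\sigma\rangle_{n+1}=2\langle\sigma\rangle_{n+1}-\langle\partial\sigma\rangle_{n+1}$ together with $\langle\partial\sigma\rangle_{n+1}\in tL_n$ yields $[\mathcal D\sigma]=2[\sigma]$ in $\hat L_{n+1}$, and $\mathcal D\sigma$ is a closed $(n+1)$-manifold cone lying in $L_{n+1}^+$ by Part 3. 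Hence the cokernel of $L_{n+1}^+\to\hat L_{n+1}$ is killed by $2$. The main obstacle, and the reason the hypothesis $n>0$ is essential, is the final step: one invokes $2$-divisibility of $\hat L_{n+1}$ for $n+1\ge 2$ (cited before Proposition \ref{L/2}), whence the cokernel is both $2$-divisible and annihilated by $2$, hence zero. For $n=0$ the group $\hat L_1\cong\mathbb Z$ is not $2$-divisible, and indeed $\mathrm{im}(\mathbb{I}+I)=2L_0\subsetneq L_0=L_0^+$, confirming that the exclusion of $n=0$ is necessary.
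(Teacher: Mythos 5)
Your proof is correct and follows the same overall strategy as the paper. The engine in Parts 1--2 is the same in both: exploit $\delta(d)=1$ and the Leibniz rule (\ref{leib}) to produce the decomposition $\eta=\bigl(\eta-d\delta(\eta)\bigr)+d\delta(\eta)$ with both pieces in $L^+$. The paper writes the $L^+$-component as $\delta(-d'\eta)$, which by (\ref{key1}) is literally the same element $\eta-d\delta(\eta)$ that you write down directly, so this is a notational difference only. Part 3 is identical. The only genuine variation is in Part 4: the paper invokes Proposition \ref{L/2} to write any element of $L_{n+1}$ as $2\xi+t\eta$ and then applies $\delta_{n+1}$ termwise, landing in $\operatorname{im}(\mathbb I+(-1)^nI)$ by (\ref{Idelta}) and (\ref{delta t}); you instead reduce to surjectivity of $L_{n+1}^+\to\hat L_{n+1}$, observe via the doubling identity $\langle\mathcal D\sigma\rangle=2\langle\sigma\rangle-\langle\partial\sigma\rangle$ that the cokernel is killed by $2$, and finish with the $2$-divisibility of $\hat L_{n+1}$ for $n+1\ge 2$. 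The two routes are logically equivalent (Proposition \ref{L/2} is itself proved from that $2$-divisibility), but your version is more self-contained and makes the geometric reason visible, whereas the paper's is shorter by reference; both make clear why $n=0$ must be excluded. One small remark: to invoke Part 3 for $\langle\mathcal D\sigma\rangle_{n+1}\in L_{n+1}^+$ one either appeals to the abstract Euclidean cone language of Remark \ref{abstract}, or, more directly, computes $\delta_{n+1}\bigl(2\langle\sigma\rangle_{n+1}-\langle\partial\sigma\rangle_{n+1}\bigr)=2\langle\partial\sigma\rangle_n-2\langle\partial\sigma\rangle_n=0$ from cases (\ref{one}) and the fourth case of the Lemma; either way the step is sound.
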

\begin{proof}
First, $\delta(L)\subset L^+$ by (\ref{sq0}). We now use the pair of equations
\begin{equation}\label{key1}
\delta(d'\eta)=-\eta+d\delta(\eta)
\end{equation}
\begin{equation}\label{key2}
\delta(d\eta)=\eta+d'\delta(\eta),
\end{equation}
which follow from (\ref{leib}). Equation (\ref{key1}) shows that $L^+=\delta(L)$: if $\delta(\eta)=0$ then $\eta=\delta(-d'\eta)$. It also shows that $L=L^++dL^+$: any element $\eta$ can be written as $\delta(-d'\eta)+d\delta(\eta)$. To see that $1$ and $d$ are linearly independent over $L^+$, suppose that $\zeta+d\eta=0$ with both $\delta(\zeta)=0$ and $\delta(\eta)=0$. Then $\delta(d\eta)=0$, so that by (\ref{key2}) $\eta=0$ and therefore $\zeta=0$. The statement that $ker(\delta_n)$ contains $L_n^{ev}$ if $n$ is even or $L_n^{od}$ if $n$ is odd follows from (\ref{three}). The statement that the image of $\delta_{n+1}$ is generated by elements of $L_n^{ev}$ if $n$ is even or $L_n^{od}$ if $n$ is odd follows from (\ref{one}) and (\ref{two}). For the final statement, (\ref{deltaI}) gives one inclusion. To obtain the reverse inclusion, recall that Proposition \ref{L/2} implies that any element of $L_{n+1}$ has the form $2\xi+t\eta$, so that any element of $L_n^+$ has the form $\delta_{n+1}(2\xi+t\eta)$. This is in the image of $\mathbb I+(-1)^nI$ because
$$
\delta_{n+1}(2\xi)=\delta_{n+1}\xi+(-1)^nI(\delta_{n+1}\xi)
$$
by (\ref{Idelta}) and
$$ 
\delta_{n+1}(t\eta)=\eta+(-1)^n I(\eta)
$$
by (\ref{delta t}). 
\end{proof}
\begin{rem}Ultimately this proof works because $\delta(d)=1$. This reflects the fact that locally a point is a boundary.
\end{rem}
\begin{rem}Whereas $E$ may be obtained from $E^+$ by adjoining a square root of $p^2$, the relationship between $L$ and $L^+$ is a bit more subtle. The unique monic degree two equation satisfied by $d$ over $L^+$ is
$$
d^2-sd+a(\pi/2)=0.$$
Since $s=d+d'$ and $a(\pi/2)=dd'$, the other root of the same polynomial is $d'$.
\end{rem}
\begin{rem}
One can define a map $\delta_n:E_n\to E_{n-1}$ analogous to $\delta_n:L_n\to L_{n-1}$. The fact that $p$ is not a zero divisor (\S\ref{E split}) implies that this is the \emph{unique} map such that $p\delta_n(\xi)=\xi-(-1)^nIx$ for all $\xi\in E_n$, and also that its kernel coincides with $ker(\mathbb I-(-1)^nI)=E_n^+$.
\end{rem}
\begin{rem}
Both in $E_n$ and in $L_n$ we have the following relations between subgroups:
\begin{equation}\label{string}
im(\mathbb I+ (-1)^nI)\subset im(\delta)\subset ker(\delta)\subset ker(\mathbb I- (-1)^nI),
\end{equation}
where the quotient $ker(\mathbb I- (-1)^nI)/im(\mathbb I+ (-1)^nI)$ is obviously killed by $2$. Let us summarize the more detailed information that we have in the two cases.

For $E_n$ the quotient $ker(\mathbb I- (-1)^nI)/im(\mathbb I+(-1)^n I)$ has order one or two according to whether $n$ is odd or even. The nontrivial subquotient in the even case is $ker(\delta)/im(\delta)$; it is detected by $mod\ 2$ Euler characteristic and generated by the point element $p^n$. The group $E_n^+=ker(\mathbb I-(-1)^n I)=ker(\delta)$ is $E^{ev}_n$ if $n$ is even and $E^{od}_n$ if $n$ is odd.

For $L_n$ the second of the three inclusions in (\ref{string}) is an equality, as well as the first if $n>0$. The group $L^+_n=ker(\delta)= im(\delta)$ is $L^{ev}_n$ if $n$ is even and $L^{od}_n$ if $n$ is odd. It is not clear whether this is the same as $ker(\mathbb I-(-1)^nI)$. It must be the same if the ``inclusion'' $L_{n-1}\to L_n$ is injective. This ends the remark.

\end{rem}

Because $D:L\to L$ is a graded ring map, it takes $L^+$ to a graded subring $D(L^+)\subset L$. The group $D(L_n^+)$ is the kernel of the operator 
$$
\bar\delta=D\circ\delta\circ D:L_n\to L_{n-1}.
$$
Because $Dt=s$ and $D\circ I\circ D=(-1)^nI$ in $L_n$, (\ref{t delta}) and (\ref{delta t}) yield
$$
s\bar\delta(\xi)=\xi- I(\xi)
$$
$$
\bar\delta(s\xi)=\xi+ I(\xi).
$$
\begin{thm}
$D(L_n^+)$ is both the kernel of $\bar\delta_n$ and the image of $\bar\delta_{n+1}$. As a module for $D(L^+)$, $L$ has a basis consisting of $1$ and $Dd=d$. If $n>0$ then $D(L_n^+)$ coincides with the image of $\mathbb I+ I$. For every $n$ we have $D(L_n^+)=L_n^{ev}$.
\end{thm}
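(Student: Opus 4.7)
The plan is to deduce the first four statements from Theorem~\ref{L+} by transport through the graded ring involution $D$, and then to establish the final identity $D(L_n^+) = L_n^{ev}$ by a pair of inclusions.

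First I would observe that $D$ is a graded ring involution of $L$ with $Dt = s$, $Dd = d$, and that $DI = (-1)^n ID$ on $L_n$ (by Lemma~\ref{DI}, since $\Delta(-1)$ acts trivially on the coinvariant quotient $L_n$). Consequently $\bar\delta_n = D \circ \delta_n \circ D$ inherits every feature of $\delta_n$ transported through $D$: $\ker\bar\delta_n = D(\ker\delta_n) = D(L_n^+)$ and $\operatorname{im}\bar\delta_{n+1} = D(\operatorname{im}\delta_{n+1}) = D(L_n^+)$; the $L^+$-basis $\{1,d\}$ of $L$ is $D$-invariant and hence a basis over $D(L^+)$; and applying $D$ to $L_n^+ = \operatorname{im}(\mathbb I + (-1)^n I)$ for $n > 0$, via $D\circ(\mathbb I + (-1)^n I) = (\mathbb I + I)\circ D$, gives $D(L_n^+) = \operatorname{im}(\mathbb I + I)$. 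The same conjugation applied to $t\delta_n(\xi) = \xi - (-1)^n I\xi$ yields $s\bar\delta_n(\eta) = \eta - I\eta$, which I shall use below.

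For the inclusion $L_n^{ev} \subseteq D(L_n^+)$: any generator $\langle M\rangle_n$ of $L_n^{ev}$ has $M$ an interior manifold cone of even dimension, so $I\langle M\rangle_n = (-1)^{\dim M}\langle M\rangle_n = \langle M\rangle_n$; this places $L_n^{ev}$ inside $\ker(\mathbb I - I)$, and the identity $s\bar\delta_n = \mathbb I - I$ in turn places $\ker(\mathbb I - I)$ inside $\ker\bar\delta_n = D(L_n^+)$.

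For the reverse inclusion (trivial at $n = 0$), I would use $D(L_n^+) = \operatorname{im}(\mathbb I + I)$ and verify that $(\mathbb I + I)\langle\sigma\rangle_n$ lies in $L_n^{ev}$ for each conical $m$-simplex $\sigma \subset \mathbb R^n$, since these generate $L_n$ by Remark~\ref{0d}. Regarding $\sigma$ as a PL manifold (the origin for $m=0$, a manifold with boundary after PL-straightening of corners for $m \ge 1$), the formulas from \S\ref{interior cone} and Remark~\ref{boundary double} give $(\mathbb I + I)\langle\sigma\rangle_n$ equal to $2t^n$ when $m = 0$, to $\langle\mathcal D\sigma\rangle_n$ when $m \ge 2$ is even, and to $\langle\partial\sigma\rangle_n$ when $m$ is odd. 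In each case the result is represented by an even-dimensional closed-manifold germ at a point: the origin, the double of $\sigma$ at the identified boundary vertex (a germ of a closed $m$-manifold at an interior point), or $\partial\sigma$ (a closed $(m-1)$-manifold, with $m-1$ even). The main obstacle is the subtle identification of the formal double $\langle\mathcal D\sigma\rangle_n$, defined via Remark~\ref{boundary double} as $2\langle\sigma\rangle_n - \langle\partial\sigma\rangle_n$ without $\mathcal D\sigma$ necessarily embedding as a cone in $\mathbb R^n$, with an honest element of $L_n^{ev}$; this is resolved by the germ viewpoint of Remark~\ref{other}, under which doubling a boundary-manifold germ at $0$ produces precisely an interior-manifold germ of the same dimension, a standard generator of $L_n^{ev}$.
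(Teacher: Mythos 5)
Your transport of Theorem~\ref{L+} through $D$ for the first three assertions is correct, and your verification of $D(L_n^+)\subseteq L_n^{ev}$ via $(\mathbb I+I)$ applied to conical simplices (with the germ-viewpoint resolution of the formal double) is sound, though it re-derives the inclusion $\operatorname{im}(\mathbb I+I)\subset L_n^{ev}$ that the paper already records at the start of \S 8.2 and could simply quote.

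The other inclusion, $L_n^{ev}\subseteq D(L_n^+)$, is where the argument breaks. You place $L_n^{ev}$ inside $\ker(\mathbb I - I)$ (correct), then assert that the identity $s\bar\delta_n = \mathbb I - I$ forces $\ker(\mathbb I-I)\subseteq\ker\bar\delta_n$. That is backwards: from $s\bar\delta_n(\xi)=\xi - I\xi$, the vanishing of $\xi - I\xi$ gives only $s\bar\delta_n(\xi)=0$, and cancelling $s$ would require $s$ not to be a zero-divisor in $L$. The paper is explicit that this is not known (it is dual to the open question of whether $t$ is a zero-divisor; see the remark after equation~(\ref{t delta}) and the discussion of potential $2$-torsion in \S 8.4, where $s\xi = 0$ with $\xi\in L^+_{2k}$ is one of the listed suspicious phenomena). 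What the identity does deliver is the opposite inclusion $\ker\bar\delta_n\subseteq\ker(\mathbb I - I)$, which is not what you need.

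The paper sidesteps this by a parity induction. For $n$ even and positive, Theorem~\ref{L+} gives $L_n^{ev}=L_n^+=\operatorname{im}(\mathbb I+I)$, and since $D\circ(\mathbb I+I)=(\mathbb I+I)\circ D$ on $L_n$ when $n$ is even, $D$ fixes this subgroup, so $D(L_n^+)=L_n^+=L_n^{ev}$. For $n$ odd, one instead observes that every even-dimensional manifold cone of dimension at most $n$ actually has dimension at most $n-1$, whence $L_n^{ev}=tL_{n-1}^{ev}$; then the even case and the ring identity $tD(\xi)=D(s\xi)$ (with $s\in L_1^+$) give $L_n^{ev}=tD(L_{n-1}^+)=D(sL_{n-1}^+)\subseteq D(L_n^+)$. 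This dimension-drop observation is the missing ingredient you need to replace the invalid cancellation of $s$.
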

\begin{proof}
All of this except the last statement follows directly from Theorem \ref{L+}. To see that $D(L_n^+)=L_n^{ev}$, first suppose that $n$ is even. If $n=0$ then both sides are $L_0$. If $n>0$ then we have
$$
D(L_{n}^+)=D(im(\mathbb I+ I))=im(D\circ (\mathbb I+I))=im((\mathbb I+I)\circ D)=im(\mathbb I+I)=L_{n}^+=L_{n}^{ev},
$$
where the first equality and the last two use Theorem \ref{L+}. 

Now suppose that $n$ is odd. We have
$$
D(L_{n}^+)=D(im(\mathbb I- I))=im(D\circ (\mathbb I-I))=im((\mathbb I+I)\circ D)=im(\mathbb I+I)\subset L_{n}^{ev},
$$
where the first equality uses Theorem \ref{L+}. On the other hand, we also have 
$$
L_n^{ev}=tL_{n-1}^{ev}=tD(L_{n-1}^+)=D(sL_{n-1}^+)\subset D(L_n^+).
$$
The first equality holds simply because an even number less than or equal to $n$ is less than or equal to $n-1$. The second uses the even case of the present result. The final inclusion holds because $s\in L_1^+$.
\end{proof}
\begin{cor}\label{sL}
$L_{2k+1}^{ev}=tL_{2k}^+$ and $L^+_{2k+1}=L_{2k+1}^{od}=D(L_{2k+1}^{ev})=sL_{2k}^+$.
\end{cor}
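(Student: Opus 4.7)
The plan is to read this off from the two preceding theorems with a tiny geometric observation thrown in. Both sides of each asserted equality lie among groups we have already identified, so nothing new needs to be manufactured — the argument is bookkeeping plus one application of the fact that $D$ is a graded ring map with $Dt=s$.

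First I would verify the direct identity $L_{2k+1}^{ev}=tL_{2k}^+$. The inclusion $\supset$ is immediate: if $M$ is an interior manifold cone of even dimension $m\le 2k$ and $0$ is the origin in $\mathbb R^1$, then $t\langle M\rangle_{2k}=\langle M\times 0\rangle_{2k+1}=\langle M\rangle_{2k+1}$, which lies in $L_{2k+1}^{ev}$, and by Theorem \ref{L+} we have $L_{2k}^+=L_{2k}^{ev}$. For the inclusion $\subset$, note that an interior manifold cone of even dimension $\le 2k+1$ must have dimension $\le 2k$; for such a cone $M$, the element $\langle M\rangle_{2k+1}$ is manifestly $t\langle M\rangle_{2k}$, which lies in $tL_{2k}^{ev}=tL_{2k}^+$.

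Next I would handle the chain $L^+_{2k+1}=L_{2k+1}^{od}=D(L_{2k+1}^{ev})=sL_{2k}^+$. The first equality is the odd case of Theorem \ref{L+}. The second is obtained by applying the involution $D$ to the equation $D(L_{2k+1}^+)=L_{2k+1}^{ev}$ from the preceding theorem. The third follows by applying $D$ to the identity established in the previous paragraph:
\[
D(L_{2k+1}^{ev})=D(tL_{2k}^+)=Dt\cdot D(L_{2k}^+)=s\cdot L_{2k}^{ev}=sL_{2k}^+,
\]
where we used that $D$ is a graded ring map, $Dt=s$, the previous theorem applied in the even case to get $D(L_{2k}^+)=L_{2k}^{ev}$, and then $L_{2k}^{ev}=L_{2k}^+$ from Theorem \ref{L+}.

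There is no real obstacle here: the only mild subtlety is recognizing that the statement $L_{2k+1}^{ev}=tL_{2k}^+$ is already implicit inside the proof of the preceding theorem (in the step $L_n^{ev}=tL_{n-1}^{ev}$ for odd $n$), and that everything else is a formal consequence of $D$-duality together with the equalities $L^+_n=L^{ev}_n$ or $L^{od}_n$ furnished by Theorem \ref{L+}.
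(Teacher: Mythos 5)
Your argument is correct and is precisely the bookkeeping the paper expects the reader to supply: the Corollary is stated without proof as an immediate consequence of Theorem \ref{L+} and the theorem on $D(L_n^+)$, and your derivation (dimension-shift for $L_{2k+1}^{ev}=tL_{2k}^+$, then applying $D$ together with $D(L_n^+)=L_n^{ev}$ and $Dt=s$) is the same chain of equalities that appears inside the paper's proof of that last theorem.
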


\subsection{The graded ring $\Theta$}

Let us write 
$$
\Theta_{2k}=L^+_{2k}=L^{ev}_{2k}=D(L^+_{2k})
$$
$$
\Theta_{2k+1}=0.
$$
Then $\Theta$ is a graded subring of $L$ contained in $L^+\cap D(L^+)$ and potentially equal to it. According to Corollary \ref{sL}, $L^+$ is generated as a $\Theta$-module by $1$ and $s$. By Theorem \ref{L+} it follows that $L$ is generated as a $\Theta$-module by $1$, $s$, $d$, and $ds$.

\subsection{Potential $2$-torsion in $L$}

Except for some small values of $n$, we do not know whether $L_n$ has any $2$-torsion (or indeed any torsion at all). This is unfortunate, because the following types of elements of $L_n$ are all killed by $2$:
\begin{itemize}

\item Any element of $ker(\mathbb I-I)\cap ker(\mathbb I+I)$.

\item Any element of $L^+_{2k+1}\cap D(L^+_{2k+1})$. In fact, this is contained in \newline
$ker(\mathbb I-I)\cap ker(\mathbb I+I)$.

\item Any $\xi\in L^+_{n}$ such that $t\xi=0$. Indeed, for such an element \newline $0=\delta(t\xi)=\xi+(-1)^nI(\xi)=2\xi$.

\item Any $\xi\in L^+_{2k}$ such that $s\xi=0$. Indeed, if $\xi$ is such an element then $D\xi\in L^+_{2k}$ and $tD\xi=D(s\xi)=0$.

\item $\delta(\xi)$ if $\xi\in L_{n+1}$ satisfies $I(\xi)=(-1)^n\xi$. Indeed, in that case $2\delta(\xi)=\newline\delta(\xi+(-1)^nI(\xi))=0$.
\end{itemize}
Therefore, \emph{if it happens that $L$ has no $2$-torsion}, the following statements are true:
\begin{itemize}
\item $\Theta=L^+\cap D(L^+)$
\item As a $\Theta$-module, $L^+$ has a basis consisting of $1$ and $s$.
\item As a $\Theta$-module, $L$ has a basis consisting of $1$, $s$, $d$, and $ds$.
\item $\ker(\delta_n)=ker(\mathbb I-(-1)^nI)$ in $L_n$.
\item In $L_{2k+1}$ the subgroup generated by all interior manifold elements is the direct sum of 
$$L^{ev}_{2k+1}=t\Theta_{2k}=L^+_{2k+1}=ker(\delta)=ker(\mathbb I+I)$$ 
and
$$L^{od}_{2k+1}=s\Theta_{2k}=D(L^+_{2k+1})=ker(\bar\delta)=ker(\mathbb I-I).$$
\item In $L_{2k}$ the subgroup generated by all interior manifold elements is the direct sum of 
$$L^{ev}_{2k}=\Theta_{2k}=L^+_{2k}=ker(\delta)=ker(\bar\delta)=ker(\mathbb I-I)$$ 
and
$$L^{od}_{2k}=st\Theta_{2k-2}=ker(\mathbb I+I).$$
\end{itemize}
We repeat that these statements rely on the assumption that $L$ has no $2$-torsion. 

For what it is worth, these statements become true if $L$ is replaced by $L/J$, where $J$ is the ideal of $2$-torsion elements.

%    Bibliographies can be prepared with BibTeX using amsplain,
%    amsalpha, or (for "historical" overviews) natbib style.


\begin{thebibliography}{99}
\bibliographystyle{amsalpha}
%    Insert the bibliography data here.

\bibitem[Al]{Al}
Alesker, Semyon:
Theory of Valuations on Manifolds: A Survey,
Geometric and Functional Analysis, Volume 17, Issue 4, November 2007, pp 1321-1341.
\newline

\bibitem[Du]{Du}
Dupont, Johan L.:
Scissors congruences, group homology and characteristic classes,
Nankai Tracts in Mathematics 1, World Scientific, Singapore, 2001
\newline


\bibitem[Gr]{Gr}
Groemer:
Minkowski Addition and Mixed Volumes,
Geometria Dedicata, Volume 6, 1977, pp 141-163.
\newline


\bibitem[Had]{Had}
Hadwiger, H.:
Vorlesungen uber Inhalt, Oberflache, und Isoperimetrie,
Springer, Berlin/Gottingen/Heidelberg, 1957.
\newline

\bibitem[JT]{JT}
Jessen, B. and Thorup A.:
The Algebra of Polytopes in Affine Spaces,
Math. Scand.,  Volume 43 / December 1978, pp 211-240.
\newline

\bibitem[McM1]{McM1}
McMullen, Peter:
The Polytope Algebra,
Advances in Mathematics, Volume 78, 1989, pp 76-130.
\newline

\bibitem[McM2]{McM2}
McMullen, Peter:
Valuations and Tensor weights on Polytopes,
Mathematika, Volume 53, Part 1, No. 105, June 2006, pp 1-47.
\newline

\bibitem[Sah]{Sah}
Sah, C-H.:
Hilbert's Third Problem: Scissors Congruence. 
Pitman, 1979.
\newline

\bibitem[Thom]{Thom}
Thom, Rene:
Quelques Proprietes Globale des Varietes Differentiable.
Commentarii Mathematici Helvetici Volume 28, pp 17-86.

\end{thebibliography}
\end{document}